\patchcmd{\@setaddresses}{\indent}{\noindent}{}{}
\patchcmd{\@setaddresses}{\indent}{\noindent}{}{}
\patchcmd{\@setaddresses}{\indent}{\noindent}{}{}
\patchcmd{\@setaddresses}{\indent}{\noindent}{}{}
\def\namedlabel#1#2{\begingroup
    #2%
    \def\@currentlabel{#2}%
    \phantomsection\label{#1}\endgroup
}
\theoremstyle{plain}
\newtheorem{theorem}{Theorem}[section]
\newtheorem{lemma}[theorem]{Lemma}
\newtheorem{proposition}[theorem]{Proposition}
\newtheorem{corollary}[theorem]{Corollary}
\newtheorem*{theorem*}{Theorem}
\theoremstyle{definition}
\newtheorem{definition}[theorem]{Definition}
\newtheorem{example}[theorem]{Example}
\theoremstyle{remark}
\newtheorem{remark}[theorem]{Remark}
\newcommand{\N}{\mathbb{N}}
\newcommand{\Z}{\mathbb{Z}}
\newcommand{\C}{\mathbb{C}}
\newcommand{\K}{\Bbbk}
\newcommand{\cl}[1]{\overline{#1}} % equivalence class
\newcommand{\op}[1]{#1^{o}} % opposite
\newcommand{\env}[1]{{#1}^{e}} % enveloping
\newcommand{\mf}[1]{\mathfrak{#1}} % mathfrak
\newcommand{\mb}[1]{\boldsymbol{#1}} % math bold symbol
\newcommand{\tensor}[1]{\otimes_{{#1}}} % tensor product over a ring
\newcommand{\injlimit}[2]{\varinjlim_{#1}\left({#2}\right)} % injective limit
\renewcommand{\ker}{\mathsf{ker}} % kernel of a morphism
\newcommand{\id}{\mathsf{Id}} % identity morphism
\newcommand{\Hom}[6]{{_{#1}^{#2}\mathsf{Hom}_{#3}^{#4}}\left({#5},{#6}\right)} % \Hom{[dl]}{[ul]}{[dr]}{[ur]}{[from]}{[to]}
\newcommand{\End}[2]{\mathsf{End}_{#1}(#2)} % endomorphisms
\newcommand{\Derk}[1]{\mathsf{Der}_{\K}\left({#1}\right)} % k-linear derivations
\newcommand{\bialgk}{\mathsf{Bialg}_{\K}} % bialgebras
\newcommand{\liek}{\mathsf{Lie}_{\K}} % Lie algebras
\newcommand{\vectk}{\mathsf{Vect}_{\K}} % vector spaces explicit
\newcommand{\Bim}[2]{{}_{#1}\M{}_{#2}} % bimodules
\newcommand{\FBim}[2]{{}^{\phantom{#1}}_{#1}\mathsf{FMod}_{#2}^{\phantom{#2}}} % filtered bimodules
\newcommand{\GBim}[2]{{}^{\phantom{#1}}_{#1}\mathsf{GMod}_{#2}^{\phantom{#2}}} % filtered bimodules
\newcommand{\Rmod}[1]{\M_{#1}} % right modules
\newcommand{\Lmod}[1]{{}_{#1}\M} % left modules
\newcommand{\Bimod}[1]{\Bim{#1}{#1}} % bimodules over the same ring
\newcommand{\ALie}[1]{\mathsf{AnchLie}_{#1}} % anchored Lie algebras
\newcommand{\Bialgd}{\mathsf{Bialgd}}
\newcommand{\prim}{\mathsf{Prim}}
\newcommand{\cB}{{\mathcal B}}
\newcommand{\cC}{{\mathcal C}}
\newcommand{\cD}{{\mathcal D}}
\newcommand{\cH}{{\mathcal H}}
\newcommand{\cI}{{\mathcal I}}
\newcommand{\cL}{{\mathcal L}}
\newcommand{\cM}{{\mathcal M}}
\newcommand{\cR}{{\mathcal R}}
\newcommand{\bB}{{\mathbb B}}
\newcommand{\bP}{{\mathbb P}}
\newcommand{\M}{\mathsf{Mod}} % Monoidal category of k-modules
\newcommand{\calpha}{\mf{a}}%{\upalpha} % associativity constraint
\newcommand{\clambda}{\mf{l}}%{\uplambda} % left unit constraint
\newcommand{\crho}{\mf{r}}%{\uprho} % right unit constraint
\newcommand{\gr}{\mathsf{gr}} % graded associated
\newcommand{\ie}{i.e.~}
\newcommand{\ot}{\otimes}
\newbox\pullbackbox
\newbox\pushoutbox
\newcommand{\etaM}[1]{{}_{\eta} {#1}}
\newcommand{\sMto}[1]{{}_{s} {#1}{}_{\op{t}}}
\newcommand{\tak}[1]{\times_{{#1}}}
\definecolor{bostonuniversityred}{rgb}{0.8, 0.0, 0.0}
\title[Anchored Lie algebras and C-M's bialgebroids]{On anchored Lie algebras and the Connes-Moscovici's bialgebroid construction}
\author{Paolo Saracco}
\address{D\'epartement de Math\'ematique, Universit\'e Libre de Bruxelles, Boulevard du Triomphe, B-1050 Brussels, Belgium.}
\urladdr{sites.google.com/view/paolo-saracco}
\urladdr{homepages.ulb.ac.be/~psaracco}
\email{paolo.saracco@ulb.ac.be}
\thanks{Paolo Saracco is a collaborateur scientifique of the Fonds de la Recherche Scientifique - FNRS and a member of the ``National Group for Algebraic and Geometric Structures and their Applications'' (GNSAGA-INdAM)}
\keywords{Connes-Moscovici's bialgebroid; anchored Lie algebra; universal enveloping algebra; primitives; universal property}
\subjclass[2010]{Primary: 16B50; 16S10; 16S30; 16T15; 16W25; 18A40. Secondary: 16W70; 17A30; 17B66}
\begin{document}

\begin{abstract}
We show how the Connes-Moscovici's bialgebroid construction naturally provides universal objects for Lie algebras acting on non-commutative algebras.
\end{abstract}

\maketitle

\tableofcontents

% --------------------------------------------------------------------------------------------------------------------------------- %
% INTRODUCTION
% --------------------------------------------------------------------------------------------------------------------------------- %

\section*{Introduction}

Given a Hopf algebra $H$ (possibly with bijective antipode $S$) and a left $H$-module algebra $A$, one can turn the vector space $A \otimes H \otimes A$ into a left bialgebroid $\cH \coloneqq A \odot  H \odot  A$ over $A$ in a natural way. This procedure has been introduced independently, and under different forms, by Connes and Moscovici \cite{ConnesMoscovici} in their study of the index theory of transversely elliptic operators, % see the introduction to Khalkhali and Rangipour on LMP
and by Kadison \cite{Kad} in connection with his work on (pseudo-)Galois extensions. Later, Panaite and Van Oystaeyen proved in \cite{PanaiteOystaeyen} that the two constructions were in fact equivalent (isomorphic as $A$-bialgebroids) and that, as algebras, they were particular instances of the L-R-smash product introduced in \cite{PanaiteOystaeyen-LR}. Nevertheless, by following \cite{Bohm-handbook}, we will refer to the bialgebroid $A \odot  H \odot  A$ as the \emph{Connes-Moscovici's bialgebroid}.

Following the foregoing and, in particular, in view of the results in \cite{PanaiteOystaeyen}, two observations were made, that triggered the present investigation: \begin{enumerate*}[label=(\roman*)\hspace{-0.17cm},ref=(\roman*)]\item\label{item:intro1} that whenever a Lie algebra $L$ acts by derivations on an associative algebra $A$ (for the sake of simplicity, let us call it an \emph{$A$-anchored Lie algebra}), then $A$ becomes naturally an $U_\K(L)$-module algebra and \item\label{item:intro2} that the associated Connes-Moscovici's bialgebroid construction satisfies a universal property (both as $\env{A}$-ring and as $A$-bialgebroid, see \cite[Proposition 3.1 and Theorem 3.2]{PanaiteOystaeyen}) which suggests the possibility that $A \odot  U_\K(L) \odot  A$ plays for an $A$-anchored Lie algebra $L$ the same role played by the universal enveloping algebra for a Lie algebra.\end{enumerate*} 

Among anchored Lie algebras we find the well-known Lie-Rinehart algebras, which are in particular Lie algebras acting on commutative algebras.
As it can be inferred from the substantial literature on the topic, Lie-Rinehart algebras are a deeply investigated area, in particular for its connections with differential geometry (the global sections of a Lie algebroid $\cL \to \cM$ form a Lie-Rinehart algebra over $\cC^{\infty}(\cM)$). Rinehart himself gave an explicit construction of the universal enveloping algebra $U(R,L)$ of a Lie-Rinehart algebra in \cite{Rinehart} and proved a Poincar\'e-Birkhoff-Witt theorem for the latter. Other equivalent constructions are provided in \cite[\S3.2]{LaiachiPaolo-complete}, \cite[page 64]{Huebschmann-Poisson}, \cite[\S18]{Sweedler-groups}. The universal property of $U(R,L)$ as an algebra is spelled out in \cite[page 64]{Huebschmann-Poisson} and \cite[page 174]{Malliavin} (where it is attributed to Feld'man). Its universal property as an $A$-bialgebroid is codified in the Cartier-Milnor-Moore Theorem for $U(R,L)$ proved in \cite[\S3]{MoerdijkLie}, where Moerdijk and Mr\v{c}un show that the construction of the universal enveloping algebra provides a left adjoint to the functor associating any cocommutative bialgebroid with its Lie-Rinehart algebra of primitive elements and they find natural conditions under which this adjunction becomes an equivalence (as it has been done in \cite{MilnorMoore} for cocommutative bialgebras and Lie algebras). Further algebraic and categorical properties and applications are investigated in \cite{ArdiLaiachiPaolo,LaiachiPaolo-diff,Huebschmann-quantization,Huebschmann-LR,Huebschmann-Poisson}.

However, there are many important examples of Lie algebras acting by derivations on associative algebras which are not necessarily commutative (actually, any Lie algebra acts by derivations on its universal enveloping algebra and any associative algebra acts by inner derivations on itself). Furthermore, while the space of primitive elements of a bialgebra is always a Lie algebra and a primitively generated bialgebra is always cocommutative, the space of primitive elements of a bialgebroid is not, in general, a Lie-Rinehart algebra and not every primitively generated bialgebroid is necessarily cocommutative. A third observation that stood up for the present investigation is that, instead, the space of primitive elements of a bialgebroid is always a Lie algebra acting by derivations on the base algebra.

These facts, together with the two foregoing observations \ref{item:intro1} and \ref{item:intro2}, called for the study of Lie algebras $L$ acting on non-commutative algebras $A$ in their own right and, in particular, for the study of the associated Connes-Moscovici's bialgebroid $A \odot  U_\K(L) \odot  A$, as it has been done for Lie-Rinehart algebras and their universal enveloping algebras.

In the present paper, we are mainly concerned with two universal properties of $A \odot  U_\K(L) \odot  A$, as an $\env{A}$-ring and as an $A$-bialgebroid, which reflect the two well-known universal properties of universal enveloping algebras reported above. The first one (Theorem \ref{thm:UEA}) exhibits $A \odot  U_\K(L) \odot  A$ as the universal $\env{A}$-ring associated with the $A$-anchored Lie algebra $L$, similarly to what happens for $U(R,L)$ in \cite[page 64]{Huebschmann-Poisson}. Namely, for any $\env{A}$-ring $\phi_A:\env{A} \to \cR$ and any $\K$-Lie algebra morphism $\phi_L:L \to \cL(\cR)$ such that
\[\big[\phi_L(X),\phi_A(a \otimes \op{b})\big] = \phi_A\left(X \cdot (a \otimes \op{b})\right),\]
for all $a,b \in A$ and all $X \in L$, there exists a unique morphism of $\env{A}$-rings $\Phi:\cB_L \to \cR$ extending $\phi_L$. This naturally affects the study of the representations of $L$ (see Corollary \ref{cor:representations}). The second universal property (Proposition \ref{prop:counit}) exhibits $A \odot  U_\K(L) \odot  A$ as the universal $A$-bialgebroid associated with the $A$-anchored Lie algebra $L$, similarly to what happens for $U(R,L)$ in \cite[Theorem 3.1(i)]{MoerdijkLie}. Namely, for any $A$-bialgebroid $\cB$ and any morphism of $\K$-Lie algebras $\phi_L:L \to \cB$ which lands into the space $\prim(\cB)$ of primitive elements of $\cB$ and that is compatible with the anchors, there exists a unique morphism of $A$-bialgebroids $\Phi:\cB_L \to \cB$ that extends $\phi_L$.

Concretely, after a first section devoted to recall some definitions and some preliminary results, we introduce $A$-anchored Lie algebras in \S\ref{ssec:Aanch} and we prove that the Connes-Moscovici's bialgebroid associated to an $A$-anchored Lie algebra satisfies the stated universal property as $\env{A}$-ring in \S\ref{ssec:UEA} (Theorem \ref{thm:UEA}). In \S\ref{ssec:primitives}, we detail how taking the space of primitives of an $A$-bialgebroid induces a functor from the category of $A$-bialgebroids to the category of $A$-anchored Lie algebras and in \S\ref{ssec:adj} we show that the Connes-Moscovici's construction provides a natural left adjoint to this latter functor (Theorem \ref{th:main}) and, at the same time, we prove the second universal property of $A \odot U_\K(L) \odot A$ (Proposition \ref{prop:counit}). At this point, by finding inspiration from the Milnor-Moore and the Moerdijk-Mr\v{c}un theorems, we look for intrinsic conditions on a bialgebroid that allow us to recognize it as a $A \odot U_\K(L) \odot A$ for a certain $A$-anchored Lie algebra. Section \ref{sec:intrinsic} is devoted to find a first answer (Theorem \ref{thm:MM}). After studying in more detail the space of primitives of a Connes-Moscovici's bialgebroid in \S\ref{ssec:CMprimitives}, we tackle the question in the general framework in \S\ref{ssec:primgen} and in the particular case of bialgebroids over a commutative base in \S\ref{ssec:commutative}. Finally, we conclude with some final remarks about future lines of investigation in \S\ref{ssec:finalremarks}.

\subsection*{Notation}

All over the paper, we assume a certain familiarity of the reader with the language of monoidal categories and of (co)monoids therein (see, for example, \cite[VII]{MacLane}).

We work over a ground field $\K$ of characteristic $0$. All vector spaces are assumed to be over $\K$. The unadorned tensor product $\otimes$ stands for $\tensor{\K}$. All (co)algebras and bialgebras are intended to be  $\K$-(co)algebras and $\K$-bialgebras, that is to say, (co)algebras and bialgebras in the symmetric monoidal category of vector spaces $\left(\vectk,\otimes ,\K\right) $. Every (co)module has an underlying vector space structure. Identity morphisms $\id_V$ are often denoted simply by $V$.

In order to avoid confusion between indexes of elements and coproducts or coactions, we will adopt the following variant of the Heyneman-Sweedler's Sigma Notation. For $c$ in a coalgebra $C$, $m$ in a left $C$-comodule $M$ and $n$ in a right $C$-comodule $N$ we write
\begin{equation*}
\Delta_C \left( c\right) = \sum c_{\left( 1\right) }\otimes c_{\left( 2\right) }, \qquad \rho_M(m) = \sum m_{(-1)} \otimes m_{(0)} \qquad \text{and} \qquad \rho_N(n) = \sum n_{(0)} \otimes n_{(1)}.
\end{equation*}

Given an algebra $A$, we denote by $\op{A}$ its opposite algebra. We freely use the canonical isomorphism between the category of left $A$-module $\Lmod{A}$ and that of right $\op{A}$-modules $\Rmod{\op{A}}$. We also set $\env{A}\coloneqq A\tensor{}\op{A}$ and we identify the category of left $\env{A}$-modules $\Lmod{\env{A}}$ with that of $A$-bimodules $\Bimod{A}$. 
Recall that any morphism of algebras $\eta:\env{A} \to R$ leads to two commuting algebra maps $s: A \to R$, $a \mapsto a\tensor{}\op{1},$ and $t:\op{A}\to R$, $\op{a} \mapsto 1\tensor{} \op{a},$ (\ie $s(a)t(\op{b}) = t(\op{b})s(a)$ for all $a,b \in A$) and conversely.  
Given two $R$-bimodules $M$ and $N$, this gives rise to several $A$-module structures on $M$ and $N$ and it leads to several ways of considering the tensor product over $A$ between the underlying $A$-bimodules. In the present paper we focus on the $A$-bimodule structure induced by $\env{A}$ acting on the left via $\eta$ and we denote by $\sMto{M}$. We usually consider this bimodule structure when taking tensor products. If we want to stress the fact that $M$ is considered as a left $\env{A}$-module, we may also write $\etaM{M}$. Therefore, given two $R$-bimodules $M$ and $N$, we consider the tensor product $A$-bimodule 
\begin{equation}\label{eq:tensA}
M \tensor{A} N \coloneqq  \sMto{M} \tensor{A}  \sMto{N} = \frac{M \ot N}{\Big\langle t(\op{a})m \ot n - m \ot s(a)n ~\big\vert~ m\in M, n\in N, a\in A \Big\rangle}.
\end{equation}
Inside $M \tensor{A} N$, we will also consider the distinguished subspace
\begin{equation}\label{eq:takA}
M \tak{A} N \coloneqq \left\{ \left. \sum_i m_i \tensor{A} n_i \in M\tensor{A}N \ \right| \ \sum_i m_it(\op{a}) \tensor{A} n_i = \sum_i m_i \tensor{A} n_is(a)\right\},
\end{equation}
which is often called \emph{Takeuchi-Sweedler's $\times$-product}. It is an $A$-subbimodule with actions
\[
a\cdot \left(\sum_i m_i \tensor{A} n_i\right) = \sum_i s(a)m_i \tensor{A} n_i \qquad \text{and} \qquad \left(\sum_i m_i \tensor{A} n_i\right)\cdot a = \sum_i m_i \tensor{A} t(\op{a})n_i
\]
for all $\sum_i m_i \tensor{A} n_i\in M\tak{A}N$ and $a\in A$ (see \cite[Definition 2.1]{Sweedler-groups} and \cite[page 460]{Takeuchi}). In particular, the following relations hold for all $m \in M$, $n \in N$, $\sum_i m_i \tensor{A} n_i \in M\tak{A} N$ and for all $a\in A$:
\begin{equation}\label{eq:tak}
t(\op{a})m \tensor{A} n \stackrel{\eqref{eq:tensA}}{=} m \tensor{A} s(a)n \qquad \text{and} \qquad \sum_i m_it(\op{a}) \tensor{A} n_i \stackrel{\eqref{eq:takA}}{=} \sum_i m_i \tensor{A} n_is(a).
\end{equation}
For the sake of clarity, it will be useful to set
\begin{equation}\label{eq:triangles}
a\triangleright m \triangleleft b = s(a)t(\op{b})m = \eta(a \otimes \op{b})m \quad \text{and} \quad b\blacktriangleright m \blacktriangleleft a = ms(a)t(\op{b}) = m\eta(a \otimes \op{b})
\end{equation}
for $m \in {{}_\eta M{}_\eta}$ and $a,b \in A$.

% --------------------------------------------------------------------------------------------------------------------------------- %
% PRELIMINARIES
% --------------------------------------------------------------------------------------------------------------------------------- %

\section{Preliminaries}

We begin by collecting some facts about bimodules, corings and bialgebroids that will be needed in the sequel. The aim is that of keeping the exposition self-contained. Many results and definitions we will present herein hold in a more general context and under less restrictive hypotheses, but we preferred to limit ourselves to the essentials.

Given a (preferably, non-commutative) $\K$-algebra $A$, the category of $A$-bimodules forms a non-strict monoidal category $\left(\Bimod{A},\tensor{A},A,\calpha,\clambda,\crho\right)$. 
Nevertheless, all over the paper we will behave as if the structural natural isomorphisms
\begin{gather*}
\calpha_{M,N,P}:\left(M\tensor{A}N\right)\tensor{A}P \to M\tensor{A}\left(N\tensor{A}P\right), \quad \left(m\tensor{A}n\right)\tensor{A}p \to m\tensor{A}\left(n\tensor{A}p\right), \\
\clambda_M: A \tensor{A} M \to M, \quad a\tensor{A}m \mapsto a\cdot m, \qquad \text{and} \qquad  \crho_M:M\tensor{A}A \to M, \quad m \tensor{A} a \mapsto m \cdot a,
\end{gather*}
were ``the identities'', that is, as if $\Bimod{A}$ was a strict monoidal category.

% --------------------------------------------------------------------------------------------------------------------------------- %
% BIMODULES
% --------------------------------------------------------------------------------------------------------------------------------- %

\subsection{Graded and filtered \texorpdfstring{$A$}{A}-bimodules}\label{ssec:filtgrad}

As far as we are concerned, we assume $A$ to be filtered over $\Z$ with filtration 
\[F_{n}(A) = 0 \quad \text{for all } n < 0 \qquad \text{and} \qquad F_{n}(A) = A \quad \text{for all } n \geq 0\]
and we assume it to be graded over $\Z$ with graduation
\[A_0 = A \qquad \text{and} \qquad A_n = 0 \quad \text{for all } n \neq 0.\]
By a \emph{graded $A$-bimodule} we mean an $A$-bimodule $M$ with a family of $A$-subbimodules $\left\{M_n\mid n \in \Z\right\}$ such that $M = \bigoplus_{n \in \Z}M_n$. By a \emph{filtered $A$-bimodule} we mean an $A$-bimodule $M$ with a chain of $A$-subbimodules $\left\{F_n(M)\mid n \in \Z\right\}$, that is $F_p(M) \subseteq F_q(M)$ if $p \leq q$. The filtration is said to be \emph{exhaustive} if $M = \bigcup_{n \in \Z} F_n(M)$. Given a filtered $A$-bimodule $M$, one can consider the \emph{associated graded} bimodule $\gr(M)$ defined by
\[\gr_n(M)\coloneqq \frac{F_n(M)}{F_{n-1}(M)} \qquad \text{and} \qquad \gr(M) \coloneqq \bigoplus_{n \in \Z} \gr_n(M).\]
In what follows we will be interested in \emph{positively} filtered and graded bimodules, that is, those for which the negative terms are $0$. 

Given two filtered bimodules $M,N$, we can perform their tensor product $M \tensor{A} N$ and this is still a filtered bimodule: the $k$-th term of the filtration on $M \tensor{A} N $ is the $A$-subbimodule generated by the elements $m \tensor{A} n$ such that $m \in F_s(M)$, $n \in F_t(N)$ and $s+t=k$. Analogously for two graded $A$-bimodules. With these tensor products, we have that the categories $\left(\FBim{A}{A},\tensor{A},A\right)$ and $\left(\GBim{A}{A},\tensor{A},A\right)$ of filtered and graded bimodules, respectively, are monoidal categories (it follows, for instance, from \cite[Chapter A, Proposition I.2.14 and Chapter D, Lemma VIII.1]{NastasescuOystaeyen}). Morphisms of filtered (respectively, graded) bimodules are $A$-bilinear maps that respect the filtration (respectively, graduation).

The result we are principally interested in is that the construction of the graded associated to a filtered bimodule is functorial (see, for example, \cite[chapter D, \S III]{NastasescuOystaeyen}). Moreover, the natural surjection
\begin{equation}\label{eq:laxmon}
\varphi_{M,N} \colon \gr(M) \tensor{A} \gr(N) \to \gr(M\tensor{A}N),
\end{equation}
uniquely determined by
\[ \left(m + F_{s-1}(M)\right)\tensor{A}\left(n + F_{t-1}(N)\right) \mapsto \left(m \tensor{A}n\right) + F_{s+t-1}(M \tensor{A}N) \]
for $m \in F_s(M)$ and $n \in F_t(N)$ (see \cite[page 318]{NastasescuOystaeyen}), and the isomorphism $\varphi_0:A \cong \gr(A)$ endow the functor
\[\gr(-):\FBim{A}{A} \to \GBim{A}{A}, \qquad  M \mapsto \gr(M),\]
with a structure of lax monoidal functor (see \cite[Definition 3.1]{Aguiar}). For further details about filtered and graded bimodules, we refer the reader to \cite{NastasescuOystaeyen}.

% --------------------------------------------------------------------------------------------------------------------------------- %
% CORINGS
% --------------------------------------------------------------------------------------------------------------------------------- %

\subsection{\texorpdfstring{$A$}{A}-corings}\label{ssec:Acorings}

Recall that an $A$-coring is a monoid in the monoidal category of $A$-bimodules $\left(\Bimod{A},\tensor{A},A\right)$. More concretely, an $A$-coring is an $A$-bimodule $\cC$ endowed with a comultiplication $\Delta_\cC : \cC \to \cC \tensor{A} \cC$ and a counit $\varepsilon_\cC:\cC \to A$ such that
\begin{equation}\label{eq:coasscoun}
\left(\Delta_\cC \tensor{A} \cC\right) \circ \Delta_\cC = \left(\cC \tensor{A} \Delta_\cC\right) \circ \Delta_\cC \quad \text{and} \quad \left(\varepsilon_\cC\tensor{A} \cC\right)\circ \Delta_\cC = \cC = \left(\cC\tensor{A} \varepsilon_\cC\right)\circ \Delta_\cC.
\end{equation}
For the general theory of corings and their comodules, we refer to \cite{BrzezinskiWisbauer}.

Later on, we will be particularly interested in (exhaustively) filtered $A$-corings such that the associated graded components are projective as $A$-bimodules. These are $A$-corings $\cC$ endowed with an increasing filtration $\left\{F_n(\cC) \mid n \in \N \right\}$ as $A$-bimodules such that $\cC = \bigcup_{n}F_n(\cC)$, $\gr_n(\cC) = F_n(\cC)/F_{n-1}(\cC)$ is a projective $A$-bimodule and
\begin{equation}\label{eq:deltafilt}
\Delta_\cC(F_n(\cC)) \subseteq \sum_{i+j = n} F_i(\cC) \tensor{A} F_j(\cC)
\end{equation}
for all $n \geq 0$ (that is to say, $\Delta_\cC$ is a morphism of filtered $A$-bimodules). We will refer to these $A$-corings as \emph{graded projective filtered $A$-corings}. By convention, we put $F_{-1}(\cC) = 0$. Notice that the inclusion \eqref{eq:deltafilt} makes sense in view of the following well-known result for filtered (bi)modules (see, for instance, \cite[Lemma B.1]{LaiachiPaolo-complete}).

\begin{lemma}\label{lemma:locfgp}
Let $\cC$ be an $A$-bimodule with filtration $\left\{F_n(\cC) \mid n \in \N \right\}$ such that $\gr_n(\cC)$ is a projective $A$-bimodule for all $n \geq 0$.
Then
\begin{equation}\label{eq:FnProj}
F_n(\cC) \cong \bigoplus_{k=0}^n\frac{F_k(\cC)}{F_{k-1}(\cC)}
\end{equation}
for all $n\geq 0$ and so $F_n(\cC)$ is a projective $A$-bimodule. Moreover, the canonical map
\[\gr\left(\cC \right) \tensor{A} \gr\left(\cC \right) \to \gr\left(\cC  \tensor{A} \cC \right), \quad \big(\xi + F_h\left(\cC \right)\big) \tensor{A} \big(\xi' + F_{k}\left(\cC \right)\big) \mapsto \xi\tensor{A}\xi' + F_{h+k+1}\left(\cC  \tensor{A} \cC \right),\]
from \eqref{eq:laxmon} is an isomorphism of $A$-bimodules. If, in addition, the filtration $\left\{F_n(\cC) \mid n \in \N \right\}$ is exhaustive, then $\cC \cong \gr(\cC)$ as $A$-bimodules and, in particular, $\cC$ is a projective $A$-bimodule.
\end{lemma}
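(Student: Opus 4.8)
The plan is to run everything off the single observation that, for each $n$, the short exact sequence of $A$-bimodules $0 \to F_{n-1}(\cC) \to F_n(\cC) \to \gr_n(\cC) \to 0$ splits, since $\gr_n(\cC)$ is projective. First I would fix, once and for all, a bimodule section $\sigma_n \colon \gr_n(\cC) \to F_n(\cC)$ of the canonical projection. Then $F_n(\cC) = F_{n-1}(\cC) \oplus \sigma_n(\gr_n(\cC))$, and since $F_{-1}(\cC) = 0$ an immediate induction on $n$ gives $F_n(\cC) = \bigoplus_{k=0}^{n}\sigma_k(\gr_k(\cC)) \cong \bigoplus_{k=0}^{n}\gr_k(\cC)$, which is \eqref{eq:FnProj}; as a finite direct sum of projective bimodules, $F_n(\cC)$ is projective.

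Next, for the claim that \eqref{eq:laxmon} is an isomorphism with $M=N=\cC$, I would argue degree by degree. Since the tensor product $-\tensor{A}-$ preserves direct sums and split monomorphisms, the decompositions above turn each structure map $F_i(\cC)\tensor{A}F_j(\cC) \to F_{i'}(\cC)\tensor{A}F_{j'}(\cC)$ (for $i\le i'$, $j\le j'$) into a split monomorphism onto a direct summand, and more precisely identify $F_i(\cC)\tensor{A}F_j(\cC)$ with $\bigoplus_{p\le i,\,q\le j}\sigma_p(\gr_p(\cC))\tensor{A}\sigma_q(\gr_q(\cC))$. Writing $G_{p,q}$ for the image of $\gr_p(\cC)\tensor{A}\gr_q(\cC)$ in $\cC\tensor{A}\cC$ along the maps induced by $\sigma_p,\sigma_q$ and the inclusions into $\cC$, unwinding the definition of the filtration on $\cC\tensor{A}\cC$ then yields $F_k(\cC\tensor{A}\cC) = \sum_{i+j=k}\Img{F_i(\cC)\tensor{A}F_j(\cC)} = \sum_{p+q\le k}G_{p,q}$, where the last equality uses only the elementary remark that $p+q\le k$ holds precisely when $p\le i$ and $q\le j$ for some $i,j$ with $i+j=k$. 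Consequently the degree-$k$ component of the already surjective map \eqref{eq:laxmon} is the induced surjection $\bigoplus_{p+q=k}\gr_p(\cC)\tensor{A}\gr_q(\cC) \twoheadrightarrow \gr_k(\cC\tensor{A}\cC)$, and it remains to check injectivity. That in turn reduces to showing that the family $\{G_{p,q}\}$ is in direct sum inside $\cC\tensor{A}\cC$ and that each $\gr_p(\cC)\tensor{A}\gr_q(\cC)\to G_{p,q}$ is injective, and both follow from injectivity of the canonical map $\cC_\infty\tensor{A}\cC_\infty \to \cC\tensor{A}\cC$, where $\cC_\infty \coloneqq \bigcup_n F_n(\cC) = \bigoplus_{p\ge 0}\sigma_p(\gr_p(\cC))$ is a projective sub-bimodule of $\cC$ (the sum being direct because it is direct at each finite stage $F_n(\cC)$).

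Finally, when the filtration is exhaustive we have $\cC = \bigcup_n F_n(\cC)$, and the compatibility of the chosen sections gives at once $\cC = \bigoplus_{n\ge 0}\sigma_n(\gr_n(\cC)) \cong \bigoplus_{n\ge 0}\gr_n(\cC) = \gr(\cC)$ as $A$-bimodules; since an arbitrary direct sum of projective bimodules is projective, this also settles the last assertion. The step I expect to be the real obstacle is exactly the injectivity of $\cC_\infty\tensor{A}\cC_\infty \to \cC\tensor{A}\cC$ needed inside the isomorphism statement: it is transparent in the exhaustive case, where $\cC_\infty=\cC$, but in general it genuinely exploits that $\cC_\infty$ is projective, hence flat, and requires a little care to argue that no nonzero element of the kernel can occupy a filtration degree in a way that would spoil the degree-wise comparison. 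Everything else is bookkeeping with the fixed splittings $\sigma_n$ and the index juggling $p+q\le k \leftrightarrow \{\,p\le i,\ q\le j,\ i+j=k\,\}$.
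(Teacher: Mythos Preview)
Your argument for \eqref{eq:FnProj} and for the final exhaustive claim coincides with the paper's: split the short exact sequence $0\to F_{n-1}(\cC)\to F_n(\cC)\to\gr_n(\cC)\to 0$ using projectivity of $\gr_n(\cC)$ and iterate. For the middle claim (that \eqref{eq:laxmon} is an isomorphism) the paper simply cites \cite[Theorem~C.24]{Majewski}, so your hands-on argument via the fixed sections $\sigma_n$ is more explicit and, in the exhaustive case where $\cC_\infty=\cC$, it is entirely correct and arguably preferable to a bare citation.

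The obstacle you flag is real, however, and the justification you sketch for it does not close it in the non-exhaustive case. Flatness of $\cC_\infty$ buys you injectivity of $\cC_\infty\tensor{A}\cC_\infty\to\cC_\infty\tensor{A}\cC$ (or, symmetrically, of $\cC_\infty\tensor{A}\cC_\infty\to\cC\tensor{A}\cC_\infty$), but the remaining leg $\cC_\infty\tensor{A}\cC\to\cC\tensor{A}\cC$ asks for flatness of $\cC$ as a left $A$-module, which is not among the hypotheses. Projectivity of a submodule does not make the inclusion pure: already over $\Z$ the free submodule $2\Z\hookrightarrow\Z$ becomes zero after $-\tensor{\Z}\Z/2\Z$, and nothing in the $\K$-algebra setting rules out the analogous phenomenon over $A$. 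So the phrase ``it genuinely exploits that $\cC_\infty$ is projective, hence flat'' does not, as written, deliver the needed injectivity of $\cC_\infty\tensor{A}\cC_\infty\to\cC\tensor{A}\cC$. In the paper's applications the filtration is always exhaustive (primitively generated bialgebroids), so this subtlety never bites; but if you want your argument to cover the lemma in the generality stated, you must either supply a different reason why the kernel of $\cC_\infty\tensor{A}\cC_\infty\to\cC\tensor{A}\cC$ cannot interfere with the degree-wise comparison, or fall back on the reference the paper invokes.
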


\begin{proof}
By definition and projectivity of $\gr_n(\cC)$, we have a split short exact sequence
\[
\xymatrix @C=15pt{
0 \ar[r] & F_{n-1}(\cC) \ar[r] & F_{n}(\cC) \ar[r] & \gr_n(\cC) \ar[r] & 0
}
\]
 of $A$-bimodules, which implies that, as $A$-bimodules, 
\[
F_n(\cC) \cong F_{n-1}(\cC) \oplus \gr_n(\cC).
\]
By proceeding recursively, one reaches \eqref{eq:FnProj}. The second claim follows from \cite[Theorem C.24, p. 93]{Majewski}. About the last claim in the statement, saying that the filtration is exhaustive means that $\cC \cong \injlimit{n}{F_n(\cC )}$ as $A$-bimodules. Since \eqref{eq:FnProj} means that $F_n(\cC )\cong F_n\left(\gr(\cC )\right)$ as $A$-bimodules, we have that $\cC \cong\injlimit{n}{F_n(\cC )}\cong \injlimit{n}{F_n\left(\gr(\cC )\right)}\cong\gr(\cC )$ as claimed.
\end{proof}
 
Analogously to the theory of filtered coalgebras (see, for example, \cite[\S 11.1]{Sweedler}), the graded $A$-bimodule $\gr(\cC)$ associated to a graded projective filtered $A$-coring $\cC$ becomes a graded $A$-coring in a natural way, as the subsequent Proposition \ref{prop:GrAss} formalize.

For the sake of clarity, by a \emph{graded $A$-coring} we mean an $A$-coring $\cD$ endowed with a graduation $\left\{\cD_n \mid n \in \N\right\}$ as $A$-bimodule such that every $\cD_n$ is projective as $A$-bimodule,
\[
\Delta_\cD\left(\cD_n\right) \subseteq \bigoplus_{i+j=n}\cD_i \tensor{A} \cD_j \qquad \text{for all } n \in \N \text{ and} \qquad \varepsilon_\cD\left(\cD_n\right) = 0 \qquad \text{for all } n\geq 1.  
\]
It can be seen as a comonoid in the monoidal category of graded $A$-bimodules. Notice that $\Delta_\cD$ is uniquely determined by the $A$-bilinear maps
\[\Delta_\cD^{[n]} : \cD_n \to \bigoplus_{i+j=n}\cD_i \tensor{A} \cD_j\]
obtained by (co)restriction of $\Delta_\cD$ to the graded components of $\cD$ and $\cD \tensor{A}\cD$ and which, in turn, are uniquely determined by the $A$-bilinear maps
\begin{equation}\label{eq:Deltacomps}
\Delta_\cD^{[h,k]} \coloneqq \left( \cD_n \xrightarrow{\Delta_{\cD}^{[n]}} \bigoplus_{i+j=n}\cD_i \tensor{A} \cD_j \xrightarrow{p_{h,k}^{\cD}} \cD_h \tensor{A} \cD_k\right)
\end{equation}
for all $n \geq 0$ and for all $h+k=n$. Following \cite[Definition 2.2]{ArdiMen}, we say that the graded $A$-coring $\cD$ is \emph{strongly graded} whenever $\Delta_\cD^{[h,k]}$ is injective for all $h,k \in \N$.

\begin{proposition}\label{prop:GrAss}
Let $\cC$ be a graded projective filtered $A$-coring. Then the $A$-coring structure on $\cC$ induces an $A$-coring structure on $\gr(\cC)$. Moreover, for any morphism of graded projective filtered $A$-corings $f : \cC \to \cB$, the induced graded $A$-bilinear morphism $\gr(f) : \gr(\cC) \to \gr(\cB)$ is a morphism of graded $A$-corings.
\end{proposition}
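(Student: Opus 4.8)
The plan is to transport the $A$-coring structure along the isomorphisms $\gr(\cC) \cong \cC$ and $\gr(\cC \tensor{A} \cC) \cong \gr(\cC) \tensor{A} \gr(\cC)$ furnished by Lemma~\ref{lemma:locfgp}, keeping careful track of degrees. First I would observe that, since $\cC$ is a graded projective filtered $A$-coring, Lemma~\ref{lemma:locfgp} applies: each $F_n(\cC)$ is projective, the canonical map $\varphi_{\cC,\cC} : \gr(\cC) \tensor{A} \gr(\cC) \to \gr(\cC \tensor{A} \cC)$ is an isomorphism, and (when the filtration is exhaustive, which is part of the definition) $\cC \cong \gr(\cC)$ as $A$-bimodules. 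Concretely I want a candidate comultiplication $\Delta_{\gr(\cC)} : \gr(\cC) \to \gr(\cC) \tensor{A} \gr(\cC)$ defined degreewise: on $\gr_n(\cC) = F_n(\cC)/F_{n-1}(\cC)$, use \eqref{eq:deltafilt} to note that $\Delta_\cC$ sends $F_n(\cC)$ into $\sum_{i+j=n} F_i(\cC) \tensor{A} F_j(\cC)$, compose with the projection to $\gr(\cC \tensor{A} \cC)$ in total degree $n$, and then apply $\varphi_{\cC,\cC}^{-1}$ to land in $\bigoplus_{i+j=n} \gr_i(\cC) \tensor{A} \gr_j(\cC)$. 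The counit $\varepsilon_{\gr(\cC)}$ is induced by $\varepsilon_\cC$ composed with $\varphi_0^{-1} : \gr(A) \cong A$; note $\varepsilon_\cC(F_n(\cC)) \subseteq A = F_0(A)$, so $\varepsilon_{\gr(\cC)}$ vanishes on $\gr_n(\cC)$ for $n \geq 1$, as required of a graded $A$-coring.

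Next I would verify the coassociativity and counit axioms \eqref{eq:coasscoun} for $(\gr(\cC), \Delta_{\gr(\cC)}, \varepsilon_{\gr(\cC)})$. The cleanest way is to invoke that $\gr(-) : \FBim{A}{A} \to \GBim{A}{A}$ is a lax monoidal functor (as recalled around \eqref{eq:laxmon}), whose lax structure maps are exactly the $\varphi_{M,N}$ and $\varphi_0$, and that these are isomorphisms on the bimodules in play by Lemma~\ref{lemma:locfgp}. A lax monoidal functor sends comonoids to... — well, more precisely, one uses that $\Delta_\cC$ and $\varepsilon_\cC$ are morphisms of filtered bimodules (this is precisely \eqref{eq:deltafilt} together with $\varepsilon_\cC(F_n(\cC)) \subseteq A$), so $\gr(\Delta_\cC)$ and $\gr(\varepsilon_\cC)$ are morphisms of graded bimodules, and $\Delta_{\gr(\cC)} = \varphi_{\cC,\cC}^{-1} \circ \gr(\Delta_\cC)$, $\varepsilon_{\gr(\cC)} = \varphi_0^{-1} \circ \gr(\varepsilon_\cC)$. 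Then the coassociativity square for $\gr(\cC)$ is obtained by applying the functor $\gr(-)$ to the coassociativity square for $\cC$ and pasting in the coherence squares for the lax monoidal structure (the hexagon relating $\varphi_{M,N}$ to associativity); since all the $\varphi$'s involved are iso, these paste together to give the required identity. The counit axioms are handled the same way, using the compatibility of $\varphi_0$ and $\varphi_{A,\cC}$, $\varphi_{\cC,A}$ with the unitors $\clambda, \crho$. I expect this diagram chase to be the main obstacle: it is routine in spirit but one must be scrupulous that every map appearing is an isomorphism (so that identities can be transported back and forth) and that the degree bookkeeping in the "total degree $n$" projections is consistent — this is exactly where the projectivity hypothesis is doing its work, via Lemma~\ref{lemma:locfgp}.

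Finally, for the functoriality statement, let $f : \cC \to \cB$ be a morphism of graded projective filtered $A$-corings, i.e. a morphism of filtered $A$-bimodules commuting with comultiplications and counits. Functoriality of $\gr(-)$ gives a graded $A$-bilinear map $\gr(f) : \gr(\cC) \to \gr(\cB)$, and naturality of $\varphi_{-,-}$ (part of the lax monoidal structure) gives the commuting square relating $\varphi_{\cC,\cC}$, $\varphi_{\cB,\cB}$ and $\gr(f) \tensor{A} \gr(f)$. Combining this naturality square with $\gr(-)$ applied to the identity $\Delta_\cB \circ f = (f \tensor{A} f) \circ \Delta_\cC$, and inverting the $\varphi$'s, yields $\Delta_{\gr(\cB)} \circ \gr(f) = (\gr(f) \tensor{A} \gr(f)) \circ \Delta_{\gr(\cC)}$; similarly $\varepsilon_{\gr(\cB)} \circ \gr(f) = \gr(\varepsilon_\cC)$ via $\varphi_0$, using $\varepsilon_\cB \circ f = \varepsilon_\cC$. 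Hence $\gr(f)$ is a morphism of graded $A$-corings, completing the proof.
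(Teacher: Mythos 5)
Your proposal is correct and follows essentially the same route as the paper: both arguments use the functoriality and lax monoidality of $\gr(-)$, the fact that $\Delta_\cC$ and $\varepsilon_\cC$ are filtered morphisms, and the isomorphism $\gr(\cC)\tensor{A}\gr(\cC)\cong\gr(\cC\tensor{A}\cC)$ from Lemma~\ref{lemma:locfgp} to transport the comonoid structure, and both obtain the statement about $\gr(f)$ by applying $\gr(-)$ to the diagrams expressing comultiplicativity and counitality of $f$. If anything, you are more explicit than the paper about the point that a merely lax monoidal functor does not preserve comonoids, so that the invertibility of $\varphi_{\cC,\cC}$ and $\varphi_0$ (coming from projectivity) is what makes the transfer work.
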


\begin{proof}
The first claim follows from the functoriality of $\gr(-)$ and from the fact that $\Delta_\cC: \cC  \to \cC  \tensor{A}\cC $ and $\varepsilon_\cC: \cC  \to A$ are filtered morphisms of $A$-bimodules. In fact, they induce graded morphisms of $A$-bimodules
\[\gr(\cC ) \xrightarrow{\gr\left(\Delta_\cC\right)} \gr(\cC  \tensor{A}\cC ) \cong \gr(\cC ) \tensor{A} \gr(\cC ) \qquad \text{and} \qquad \gr(\cC ) \xrightarrow{\gr\left(\varepsilon_\cC\right)}\gr(A) \cong A,\]
which provides an $A$-coring structure on $\gr(\cC )$, since $\gr(-)$ is lax monoidal. Concerning the second claim, it is enough to apply $\gr(-)$ to the diagrams expressing the comultiplicativity and counitality of $f$.
\end{proof}

\begin{remark} 
For any morphism $f : \cC \to \cB$ of filtered $A$-corings, we have that
\begin{equation}\label{eq:Deltahk}
\begin{gathered}
\xymatrix @C=30pt @R=18pt{
{\gr_n(\cC )} \ar[r]^-{\Delta^{[n]}_{\gr(\cC)}} \ar[d]_-{\gr_n(f)} & {\displaystyle\bigoplus_{i+j=n}\gr_i(\cC ) \tensor{A} \gr_j(\cC )}\ar[d]^-{\underset{i+j=n}{\bigoplus}\gr_i(f) \tensor{A} \gr_j(f)} \\
{\gr_n(\cB )} \ar[r]_-{\Delta^{[n]}_{\gr(\cB)}}  & {\displaystyle\bigoplus_{i+j=n}\gr_i(\cB ) \tensor{A} \gr_j(\cB )}
}
\end{gathered}
\end{equation}
commutes for all $n \geq 0$.
\end{remark}

\begin{corollary}[of Proposition \ref{prop:GrAss}]
The assignment $\cC \mapsto \gr(\cC)$ induces a functor $\gr(-)$ from the category of graded projective filtered $A$-corings to the category of graded $A$-corings.
\end{corollary}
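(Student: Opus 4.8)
The plan is to deduce the statement directly from Proposition \ref{prop:GrAss} together with the functoriality of $\gr(-)\colon \FBim{A}{A} \to \GBim{A}{A}$ already recalled in \S\ref{ssec:filtgrad}. The underlying idea is that forgetting comultiplications and counits defines faithful forgetful functors from the category of graded projective filtered $A$-corings to $\FBim{A}{A}$ and from the category of graded $A$-corings to $\GBim{A}{A}$; since the second of these is faithful, every functoriality identity that we need can be verified after passing to the underlying graded $A$-bimodules, where it is already known to hold.

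Concretely, I would argue in three short steps. First, the assignment on objects is well defined: given a graded projective filtered $A$-coring $\cC$, the first assertion of Proposition \ref{prop:GrAss} equips $\gr(\cC)$ with a canonical $A$-coring structure whose counit vanishes in positive degrees and whose comultiplication satisfies the required compatibility with the graduation, while Lemma \ref{lemma:locfgp} ensures that each $\gr_n(\cC)$ is a projective $A$-bimodule; hence $\gr(\cC)$ is an object of the target category. Second, on morphisms: for $f\colon \cC \to \cB$ a morphism of graded projective filtered $A$-corings, the graded $A$-bilinear map $\gr(f)$ is by definition the image of $f$ under the functor $\gr(-)$ on filtered $A$-bimodules, and the second assertion of Proposition \ref{prop:GrAss} shows that it is compatible with comultiplications and counits, so it is a morphism of graded $A$-corings. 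Third, the identities $\gr(\id_\cC) = \id_{\gr(\cC)}$ and $\gr(g \circ f) = \gr(g) \circ \gr(f)$ hold already in $\FBim{A}{A}$ by the functoriality of $\gr(-)$; since a morphism of graded $A$-corings is determined by its underlying graded $A$-bilinear map, these identities transport verbatim, and $\gr(-)$ is a functor as claimed.

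I do not expect a genuine obstacle here: all the substance has been front-loaded into Proposition \ref{prop:GrAss} (which in turn rests on Lemma \ref{lemma:locfgp} and on the lax monoidality of $\gr(-)$), so the corollary is essentially a bookkeeping statement. The one point that deserves a word of care is that the target ``category of graded $A$-corings'' is taken with the projectivity of each homogeneous component built into its very definition, as in the excerpt; this is exactly the role played by Lemma \ref{lemma:locfgp}, which guarantees that the homogeneous components $\gr_n(\cC)$ are projective $A$-bimodules, so that $\gr(\cC)$ really lands in that category.
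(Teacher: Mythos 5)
Your argument is correct and coincides with the paper's (implicit) proof: the corollary is a direct bookkeeping consequence of Proposition \ref{prop:GrAss} together with the functoriality of $\gr(-)$ on (filtered) $A$-bimodules, exactly as you describe. One minor remark: the projectivity of the components $\gr_n(\cC)$ is already part of the definition of a graded projective filtered $A$-coring, so the appeal to Lemma \ref{lemma:locfgp} at that point is not needed (that lemma enters earlier, inside the proof of Proposition \ref{prop:GrAss}).
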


% --------------------------------------------------------------------------------------------------------------------------------- %
% BIALGEBROIDS
% --------------------------------------------------------------------------------------------------------------------------------- %

\subsection{\texorpdfstring{$A$}{A}-bialgebroids}

Next, we recall the definition of a left bialgebroid. It can be considered as a revised version of the notion of a $\times_A$-bialgebra as it appears in \cite[Definition 4.3]{Schauenburg-bialgebras}. However, we prefer to mimic \cite{Lu} as presented in \cite[Definition 2.2]{BrzezinskiMilitaru} (in light of \cite[Theorem 3.1]{BrzezinskiMilitaru}, this is something we may do). 

\begin{definition}
A \emph{left bialgebroid} is the datum of
\begin{enumerate}[label=(B\arabic*),ref=B\arabic*]
\item a pair $\left(A,\cB \right)$ of $\K$-algebras, called \emph{the base algebra} and \emph{the total algebra} respectively,
\item a $\K$-algebra map $\eta_{\cB}: \env{A} \to \cB $, inducing a \emph{source} $s_{\cB}:A\to \cB $ and a \emph{target} $t_{\cB}:\op{A}\to \cB $ which are $\K$-algebra maps, and making of $\cB $ an $\env{A}$-bimodule,
\item an $A$-coring structure $\left(\cB ,\Delta_\cB,\varepsilon_\cB\right)$ on the $A$-bimodule ${}_{\eta}\cB =\sMto{\cB }$, 
\end{enumerate}
satisfying
\begin{enumerate}[resume*]
\item $\Delta_\cB$ takes values into $\cB  \tak{A} \cB $ and $\Delta_\cB: \cB  \to \cB \tak{A}\cB $ is a morphism of $\K$-algebras, where the algebra structure on $\cB \tak{A}\cB $ is given by the component-wise product
\begin{equation}\label{eq:prod}
(\xi \tensor{A} \zeta)(\xi' \tensor{A} \zeta') = \xi\xi'\tensor{A}\zeta\zeta',
\end{equation}
for all $\xi,\xi',\zeta,\zeta'\in \cB$,
\item\label{item:B5} $\varepsilon_\cB\big(\xi s_\cB\left(\varepsilon\left(\xi'\right)\right)\big) = \varepsilon_\cB\left(\xi\xi'\right) = \varepsilon_\cB\big(\xi t_\cB\left(\op{\varepsilon_\cB\left(\xi '\right)}\right)\big)$ for all $\xi,\xi'\in \cB $.
\item\label{item:B6} $\varepsilon_\cB(1_\cB )=1_A$.
\end{enumerate}
A $\K$-linear map $\varepsilon_\cB: \cB  \to A$ which is left $\env{A}$-linear and satisfies \eqref{item:B5} and \eqref{item:B6} is called a \emph{left character} on the $\env{A}$-ring $\cB $ (see \cite[Lemma 2.5 and following]{Bohm-handbook}).

A morphism of left bialgebroids from $(A,\cB)$ to $(A',\cB')$ is a pair of $\K$-algebra morphisms $\phi_0: A \to A'$ and $\phi_1 : \cB \to \cB'$ such that
\[
\phi_1 \circ s_{\cB} = s_{\cB'} \circ \phi_0, \qquad \phi_1 \circ t_{\cB} = t_{\cB'} \circ \phi_0, \qquad \varepsilon_{\cB'}\circ \phi_1 = \phi_0 \circ \varepsilon_{\cB} 
\]
and
\[
\xymatrix@C=15pt{
\cB \ar[rrr]^-{\phi_1} \ar[d]_-{\Delta_\cB} & & & \cB' \ar[d]^-{\Delta_{\cB'}}  \\
\cB \tensor{A} \cB \ar[rr]_-{\phi_1 \tensor{A} \phi_1} & & \cB'\tensor{A} \cB' \ar@{->>}[r] & \cB'\tensor{A'} \cB'
}
\]
commutes.
In this paper we will focus on left bialgebroids over a fixed base algebra $A$, that we call \emph{left $A$-bialgebroids}. A morphism of left $A$-bialgebroids between $\cB $ and $\cB ^{\prime }$ is then an algebra map $\phi :\cB \to \cB ^{\prime }$ such that
\begin{equation*}
\phi \circ s_{\cB }=s_{\cB ^{\prime }},\qquad \phi \circ t_{\cB }=t_{\cB ^{\prime }},\qquad \varepsilon _{\cB ^{\prime }}\circ \phi =\varepsilon _{\cB },\qquad \left( \phi \tensor{A}\phi \right) \circ \Delta _{\cB }=\Delta _{\cB ^{\prime }}\circ \phi .
\end{equation*}
The category of left $A$-bialgebroids will be denoted by $\Bialgd_{A}$.
\end{definition}

We will often omit to specify the $A$-bialgebroid $\cB$ in writing the comultiplication $\Delta_\cB$ or the counit $\varepsilon_\cB$, when it is clear from the context.

\begin{remark}
Let us make explicit some of the relations involved in the definition of a left bialgebroid and some of their consequences. In terms of elements of $A$ and $\cB $, and by resorting to Sweedler Sigma Notation, relations \eqref{eq:coasscoun} become
\begin{equation}\label{eq:coasscounel}
\begin{gathered}
\sum \xi_{(1)(1)} \tensor{A} \xi_{(1)(2)} \tensor{A} \xi_{(2)} = \sum \xi_{(1)}\tensor{A} \xi_{(2)(1)}\tensor{A} \xi_{(2)(2)} \\
 \text{and} \qquad \sum s\big(\varepsilon\left(\xi_{(1)}\right)\big)\xi_{(2)} = \xi = \sum t\big(\op{\varepsilon\left(\xi_{(2)}\right)}\big)\xi_{(1)}
\end{gathered}
\end{equation}
for all $\xi\in \cB $. Moreover, for all $a,b \in A$ the $A$-bilinearity of $\Delta$ forces
\[
\sum \big( s\left( a\right) t\left( \op{b}\right) \xi \big) _{\left( 1\right) }\tensor{A} \big( s\left( a\right) t\left( \op{b}\right) \xi \big) _{\left( 2\right) }
= \sum s\left( a\right) \xi _{\left( 1\right) }\tensor{A}t\left( \op{b}\right) \xi _{\left( 2\right) }.
\]
In particular, 
\begin{equation}\label{eq:DeltaLin}
\Delta\left(s\left(a\right)\right) = s\left(a\right)\tensor{A}1_\cB  \qquad \text{and} \qquad \Delta\left(t\left(\op{a}\right)\right) = 1_\cB  \tensor{A} t\left(\op{a}\right)
\end{equation}
for all $a\in A$. As a consequence, the multiplicativity of $\Delta$ forces
\begin{equation}\label{eq:deltablack}
\begin{gathered}
\Delta(\xi s(a)) = \Delta(\xi )\Delta(s(a)) = \big(\sum \xi _{(1)}\tensor{A}\xi _{(2)}\big)\big(s(a)\tensor{A}1_\cB \big) = \sum \xi _{(1)}s(a)\tensor{A}\xi _{(2)}, \\
\Delta(\xi t(\op{a})) = \Delta(\xi )\Delta(t(\op{a})) = \big(\sum \xi _{(1)}\tensor{A}\xi _{(2)}\big)\big(1_\cB \tensor{A}t(\op{a})\big) = \sum \xi _{(1)}\tensor{A}\xi _{(2)}t(\op{a}),
\end{gathered}
\end{equation}
for all $\xi\in \cB $. The $A$-bilinearity of $\varepsilon$ becomes
\begin{equation}\label{eq:epsibil}
\varepsilon \big( s\left( a\right) t\left( \op{b}\right) \xi\big) = a\varepsilon \left(\xi\right) b
\end{equation}
and since it also preserves the unit, we have that 
\begin{equation}
\varepsilon \left( s\left( a\right) \right) =\varepsilon \left(s(a) 1_\cB \right) =a\varepsilon \left( 1_\cB \right) = a =\varepsilon\left( 1_\cB \right) a = \varepsilon\left( t\left( \op{a}\right) \right) .  \label{eq:epsist}
\end{equation}
Therefore, in light of the character condition on $\varepsilon $,
\begin{equation}
\varepsilon \left( \xi s\left(a\right) \right) \stackrel{\eqref{item:B5}}{=} \varepsilon \bigg( \xi t\Big( \op{\varepsilon \big( s\left(a\right) \big)} \Big)\bigg) \stackrel{\eqref{eq:epsist} }{=} \varepsilon \left( \xi t\left( \op{a}\right) \right) .  \label{eq:epsiscambia}
\end{equation}
\end{remark}

Henceforth, all bialgebroids will be left ones, whence we will omit to specify it.

\begin{example}\label{ex:bialgebroids}
Let us give some examples.

\begin{enumerate}[leftmargin=0.7cm,label=(\alph*),ref=(\alph*)]
\item Any bialgebra over a field $\K$ is a $\K$-bialgebroid.

\item\label{item:enveloping} On the algebra $A\otimes \op{A}$ we may consider the morphisms
\begin{gather*}
s :A\to A\otimes \op{A},\quad a\mapsto a\otimes \op{1}, \qquad t:\op{A}\to A\otimes \op{A},\quad \op{b}\mapsto 1\otimes \op{b}, \\
\Delta :A\otimes \op{A}\to \left( A\otimes \op{A}\right) \otimes _{A}\left( A\otimes \op{A}\right) , \quad a\otimes \op{b}\mapsto \left( a\otimes \op{1}\right) \tensor{A}\left( 1\otimes \op{b}\right) , \\
\varepsilon :A\otimes \op{A}\to A, \quad a\otimes \op{b}\mapsto ab.
\end{gather*}
These make of $A\otimes \op{A}$ and $A$-bialgebroid (see \cite[Example 3.1]{Lu}).

\item\label{item:exb} Assume that $A$ is a finite-dimensional algebra over $\K$ and set $\cB \coloneqq \End{\K}{A}$. Consider $s:A \to \cB$ given by left multiplication and $t:A \to \cB$ given by right multiplication. The morphism
\[\cB \otimes \cB \to \Hom{}{}{\K}{}{A \otimes A}{A}, \qquad f \otimes g \mapsto \left[a \otimes b \mapsto f(a)g(b)\right]\]
induces an isomorphism $\cB \tensor{A} \cB \cong \Hom{}{}{\K}{}{A \otimes A}{A}$.
In view of this, one can endow $\cB$ with a structure of $A$-bialgebroid with source $s$, target $t$, $\Delta$ given by
\[\Delta(f)(a \otimes b) = f(ab)\]
(up to the latter isomorphism) and $\varepsilon$ by evaluation at $1_A$ (see \cite[page 56]{Lu}).

\item Let $R \to S$ be a depth two ring extension (see \cite[Definition 3.1]{KadSzla}) and set $A \coloneqq C_S(R)$, the centralizer of $R$ in $S$. Then the ring of endomorphisms $\cB \coloneqq \End{R}{S}$ of $S$ as an $R$-bimodule satisfies
\[\cB \tensor{A} \cB \cong \Hom{R}{}{R}{}{S \tensor{R} S}{S}, \qquad f \tensor{A} g \mapsto \left[s \tensor{R}s'\mapsto f(s)g(s')\right],\]
as above (see \cite[Proposition 3.11]{KadSzla}) and we may endow it with an $A$-bialgebroid structure exactly as in \ref{item:exb} (\cite[Theorem 4.1]{KadSzla}).

\item Let $\left( H,m,u,\Delta ,\varepsilon \right) $ be a bialgebra and let $A$ be a braided commutative algebra in ${_H^H\mathcal{YD}}$. This means that $A$ is at the same time a left $H$-module algebra (that is, an algebra in the monoidal category $\left(\Lmod{H},\otimes ,\K \right) $ of left $H$-modules) with action $H\otimes A\to A, h \otimes a\mapsto h\cdot a$, satisfying
\[
h\cdot \left( ab\right) = \sum \left( h_{\left( 1\right) }\cdot a\right) \left(h_{\left( 2\right) }\cdot b\right) \qquad \text{and}\qquad h\cdot 1_{A}=\varepsilon \left( h\right) 1_{A} 
\]
for all $h\in H$, $a,b\in A$, and a left $H$-comodule algebra with coaction $\rho:A \to H \otimes A$, satisfying
\[
\rho(ab) = \sum a_{(-1)}b_{(-1)} \otimes a_{(0)}b_{(0)} \qquad \text{and} \qquad \rho(1_A) = 1_H \otimes 1_A
\]
for all $a,b \in A$. Furthermore, these structures are required to satisfy
\[\sum h_{(1)}a_{(-1)} \otimes h_{(2)}\cdot a_{(0)} = \sum \left(h_{(1)}\cdot a\right)_{(-1)}h_{(2)} \otimes \left(h_{(1)}\cdot a\right)_{(0)}\]
and $\sum \left(a_{(-1)}\cdot b\right)a_{(0)} = ab$ for all $a,b \in A$ and $h \in H$ (the latter expresses the braided commutativity). Under these conditions, the smash product algebra $H \# A$ is an $A$-bialgebroid with 
\begin{gather*}
(h \otimes a)(h' \otimes b) = \sum h_{(1)}h' \otimes \left(h_{(2)}\cdot b\right)a, \qquad 1_{H \# A} = 1_H \otimes 1_A, \\
 s(a) = \sum a_{(-1)} \otimes a_{(0)}, \qquad t(\op{a}) = 1_H \otimes a, \\
\Delta(h \otimes a) = \sum \left( h_{(1)} \otimes 1_A \right) \tensor{A} \left( h_{(2)} \otimes a\right) \qquad \text{and} \qquad \varepsilon(h \otimes a) = \varepsilon(h)a
\end{gather*}
for all $a,b \in A$, $h,h'\in H$. This is a left-left symmetrical version of \cite[Example 3.4.7]{Bohm-handbook} and \cite[Theorem 4.1]{BrzezinskiMilitaru}.

\item \emph{Connes-Moscovici's bialgebroid} (see \cite{ConnesMoscovici,PanaiteOystaeyen} and \cite[Example 3.4.6]{Bohm-handbook}). Let $H$ be a Hopf algebra (in fact, a bialgebra would suffice) and let $A$ be an $H$-module algebra. The vector space $\cB \coloneqq A\otimes H\otimes A$ becomes an algebra via
\begin{equation}\label{eq:CMprod}
\left( a\otimes h\otimes b\right) \left( a^{\prime }\otimes h^{\prime}\otimes b^{\prime }\right) =\sum a\left( h_{\left( 1\right) }\cdot a^{\prime}\right) \otimes h_{\left( 2\right) }h^{\prime }\otimes \left( h_{\left(3\right) }\cdot b^{\prime }\right) b 
\end{equation}
and unit $1_{A}\otimes 1_{H}\otimes 1_{A}$. It can be endowed with an $A$-bialgebroid structure as follows
\begin{gather*}
s_\cB (a) \coloneqq a \otimes 1_H \otimes 1_A, \qquad t_\cB (\op{a}) \coloneqq 1_A \otimes 1_H \otimes a, \notag \\
\Delta _{\cB }\left( a\otimes h\otimes b\right)  \coloneqq \sum \left( a\otimes h_{\left( 1\right) }\otimes 1_{A}\right) \otimes _{A}\left( 1_{A}\otimes h_{\left(2\right) }\otimes b\right) , \\
\varepsilon _{\cB }\left( a\otimes h\otimes b\right) \coloneqq a\varepsilon_{H}\left( h\right) b. 
\end{gather*}
Following \cite{PanaiteOystaeyen}, we will denote this bialgebroid by $A\odot H\odot A$, shunning the use of symbols like $\# $ or $\ltimes,\rtimes$ in order to avoid confusion with two-sided smash/crossed products in the sense of \cite{HausserNill-diag}.
Notice that for all $a,a^{\prime },b,b^{\prime }\in A$ and $h\in H$ we have
\begin{align*}
a\triangleright \left( a^{\prime }\otimes h\otimes b^{\prime }\right)\triangleleft b & \stackrel{\eqref{eq:triangles}}{=} \left( a\otimes 1_{H}\otimes 1_{A}\right) \left(1_{A}\otimes 1_{H}\otimes b\right) \left( a^{\prime }\otimes h\otimes b^{\prime }\right) \stackrel{\eqref{eq:CMprod}}{=} aa^{\prime }\otimes h\otimes b^{\prime }b, \\
a\blacktriangleright \left( a^{\prime }\otimes h\otimes b^{\prime }\right)\blacktriangleleft b & \stackrel{\eqref{eq:triangles}}{=} \left( a^{\prime }\otimes h\otimes b^{\prime}\right) \left( b\otimes 1_{H}\otimes 1_{A}\right) \left( 1_{A}\otimes 1_{H}\otimes a\right)  \\
 & \stackrel{\eqref{eq:CMprod}}{=} \sum a^{\prime }\left( h_{\left( 1\right) }\cdot b\right) \otimes h_{\left( 2\right) }\otimes \left( h_{\left( 3\right) }\cdot a\right) b^{\prime } .
\end{align*}
\end{enumerate}
\end{example}

\begin{remark}
It is known that an $A^{e}$-ring is a left bialgebroid if and only if the category $\Lmod{\cB}$ of left $\cB $-modules is monoidal in such a way that the forgetful functor $\Lmod{\cB} \to \Lmod{\env{A}}$ is a monoidal functor (see, for example, \cite[Theorem 5.1]{Schauenburg-bialgebras}). In particular, the tensor product of two $\cB $-modules $M$ and $N$ is $M\tensor{A}N$ with diagonal action $\xi \centerdot \left( m\tensor{A}n\right) \coloneqq \sum \left( \xi _{\left( 1\right) }\centerdot m\right) \tensor{A}\left( \xi _{\left( 2\right) }\centerdot n\right) $ and $A$ is a $\cB $-module with 
\begin{equation}\label{eq:dotaction}
\xi \centerdot a \coloneqq \varepsilon \left( \xi t\left( \op{a}\right) \right) \stackrel{\eqref{eq:epsiscambia}}{=} \varepsilon
\left( \xi s\left( a\right) \right) .  
\end{equation}
It is, in fact, a left $\cB$-module algebra (see \cite[\S3.7.1]{Bohm-handbook} for a right-handed analogue).
In particular, for all $a,b \in A$ and all $\xi \in \cB$ we have
\begin{equation}
\sum\left( \xi _{\left( 1\right) }\centerdot a\right) \left( \xi _{\left( 2\right)
}\centerdot b\right) =\xi \centerdot ab \qquad \text{and} \qquad \xi\centerdot 1_A = \varepsilon(\xi).  \label{eq:modulealgebra}
\end{equation}
\end{remark}

% --------------------------------------------------------------------------------------------------------------------------------- %
% ANCHORED lIE, FIRST UNIVERSAL PROPERTY, PBW THEOREM
% --------------------------------------------------------------------------------------------------------------------------------- %

\section{The Connes-Moscovici's bialgebroid as universal \texorpdfstring{$\env{A}$}{Ae}-ring}\label{sec:UEA}

In this section we introduce $A$-anchored Lie algebras and we show that the Connes-Moscovici's bialgebroid $A \odot U_\K(L) \odot A$ naturally associated to an $A$-anchored Lie algebra satisfies a universal property as $\env{A}$-ring. In particular, unless stated otherwise, we assume to work over a fixed base algebra $A$, possibly non-commutative. We conclude the section with an extension of the PBW theorem to bialgebroids of the form $A \odot U_\K(L) \odot A$.

% --------------------------------------------------------------------------------------------------------------------------------- %
% ANCHORED lIE ALGEBRAS
% --------------------------------------------------------------------------------------------------------------------------------- %

\subsection{\texorpdfstring{$A$}{A}-anchored Lie algebras}\label{ssec:Aanch}

\begin{definition}\label{eq:Aanch}
We call $A$\emph{-anchored Lie algebra} an ordinary Lie algebra $L$ over $\K$ together with a Lie algebra morphism $\omega :L\to \Derk{A}$, called the \emph{anchor}.
We will often write $X \cdot a$ for $\omega(X)(a)$. A morphism of $A$-anchored Lie algebras between $\left( L,\omega \right) $ and $\left( L^{\prime },\omega ^{\prime }\right) $ is a Lie algebra morphism $f:L\to L^{\prime }$ such that $\omega^{\prime }\circ f=\omega $. The category of $A$-anchored Lie algebras and their morphisms will be denoted by $\ALie{A}$.
\end{definition}

\begin{remark}
The reader needs to be warned that the terminology ``$A$-anchored'' used here is inspired from the literature, but it neither strictly coincides with the classical notion of $A$-anchored module, nor it properly extends it. In fact, in the literature, an ``$A$-anchored module'' \cite[\S1]{Popescu} (also called ``$A$-module with arrow'' \cite[\S3]{Popescu1} or ``$A$-module fl{\'e}ch{\'e}'' \cite[\S1]{Pradines}) is an $A$-module $M$ over a \emph{commutative} algebra $A$ together with an \emph{$A$-linear map} $M \to \Derk{A}$. Since, in the present framework, $A$ is assumed to be preferably non-commutative, the vector space $\Derk{A}$ does not carry any natural $A$-module structure and hence we do not have any reasonable way to speak about an $A$-linear anchor. In spite of this, in order to limit the proliferation of different terminology in the field and trusting that the non-commutative context will help in distinguishing between the two notions, we decided to adopt the term ``$A$-anchored'' in this framework as well.
\end{remark}

\begin{example}\label{Ex:anchored}
Let us give a few important examples.
\begin{enumerate}[leftmargin=0.7cm,label=(\alph*),ref=(\alph*)]
\item The Lie algebra $\Derk{A}$ with the identity map is an $A$-anchored Lie algebra.

\item Any Lie algebra $L$ is a $U_\K(L)$-anchored Lie algebra.

\item Any $\K$-algebra $A$, with the associated Lie algebra structure $\cL(A) = \left(A,[-,-]\right)$ given by the commutator bracket, is an $A$-anchored Lie algebra with anchor induced by the adjoint representation. Namely,
\[\cL(A) \to \Derk{A}, \qquad a \mapsto \big[b \mapsto [a,b]\big].\]

\item A \emph{Lie-Rinehart algebra} over a commutative algebra $R$ (called in this way in honour of G.~S.~Rinehart, who studied them in \cite{Rinehart} under the name of $(K,R)$-Lie algebras) is a Lie algebra $L$ endowed with a (left) $R$-module structure $R\otimes L\to L, r \otimes X \mapsto r \cdot X,$ and with a Lie algebra morphism $\omega:L\to \Derk{R} $ such that, for all $r \in R$ and $X,Y\in L$,
\[
\omega \left( r\cdot X\right) = r\cdot \omega \left( X\right) \qquad \text{and}\qquad \left[ X,r\cdot Y\right] =r\cdot \left[ X,Y\right] +\omega \left(X\right) \left( r\right) \cdot Y.
\]
Clearly, any Lie-Rinehart algebra over $R$ is an $R$-anchored Lie algebra.

\item\label{ex:itemPrim} Let $\cB $ be an $A$-bialgebroid and consider the vector space of \emph{primitive elements}
\begin{equation*}
\prim \left( \cB \right) \coloneqq \big\{ X \in \cB \mid \Delta \left( X \right) = X \tensor{A}1+1\tensor{A}X \big\} .
\end{equation*}
This is a Lie algebra with the commutator bracket.  
Assume that $X\in \prim \left( \cB \right) $. In light of Equation \eqref{eq:modulealgebra}, $X$ acts on $A$ by derivations, which means that the assignment
\[
\omega _{\cB }:\prim \left( \cB \right) \to \Derk{A} , \qquad X\mapsto X\centerdot (-),
\]
is well-defined. Moreover,
\begin{align*}
\omega _{\cB }\left( XY-YX\right) & \left( a\right) = \left( XY\right) \centerdot a-\left( YX\right) \centerdot a = X\centerdot \left( Y\centerdot a\right) -Y\centerdot \left( X\centerdot a\right) \\
&= \omega _{\cB }\left( X\right) \left( \omega _{\cB }\left( Y\right) \left( a\right) \right) -\omega _{\cB }\left( Y\right) \left( \omega _{\cB }\left( X\right) \left( a\right) \right) = \left[ \omega _{\cB }\left( X\right) ,\omega _{\cB }\left( Y\right) \right] \left( a\right)
\end{align*}
implies that $\omega _{\cB }$ is a morphism of Lie algebras and hence it is an
anchor for $\prim \left( \cB \right) $.
As a matter of notation, we write $\theta_\cB: \prim(\cB) \to \cB$ for the canonical inclusion.
\end{enumerate}
\end{example}

\begin{definition}\label{def:manydefs}
Assume that $(L,\omega)$ is an $A$-anchored Lie algebra. 
\begin{enumerate}[label=(L\arabic*)]
\item An \emph{$A$-anchored Lie ideal} $(L',\omega')$ in $(L,\omega)$ is a Lie ideal $L'$ in $L$ together with an anchor $\omega'$ such that the inclusion $L'\subseteq L$ is a morphism in $\ALie{A}$. Equivalently, it is vector subspace $L'\subseteq L$ such that $[X,X'] \in L'$ for all $X'\in L', X \in L$ with anchor $\omega'$ given by the restriction of $\omega$.
\item An \emph{$A$-anchored Lie subalgebra} $(L'',\omega'')$ in $(L,\omega)$ is a Lie subalgebra $L''$ of $L$ together with an anchor $\omega''$ such that the inclusion $L''\subseteq L$ is a morphism in $\ALie{A}$. Equivalently, it is vector subspace $L''\subseteq L$ such that $[X'',Y''] \in L''$ for all $X'',Y''\in L''$ with anchor $\omega''$ given by the restriction of $\omega$.
\item\label{item:L3} If we have two $A$-anchored Lie algebras $(L',\omega')$ and $(L'',\omega'')$ and a Lie algebra morphism $\delta:L'' \to \Derk{L'}$ such that
\begin{equation}\label{eq:deltaomega}
\Big[\omega''(X''),\omega'(X')\Big] = \omega'\big(\delta(X'')(X')\big)
\end{equation}
in $\Derk{A}$ for all $X'\in L'$ and all $X'' \in L''$, then we define the \emph{semi-direct product} of $(L',\omega')$ and $(L'',\omega'')$ to be the $\K$-vector space $L''\times L'$ with Lie bracket
\begin{equation}\label{eq:SDPbracket}
\big[(X'', X'), (Y'', Y') \big] \coloneqq \big([X'',Y''], \delta(X'')(Y') - \delta(Y'')(X') + [X',Y']\big)
\end{equation}
and anchor
\begin{equation}\label{eq:SDPanchor}
L \to \Derk{A}, \qquad (X'',X') \mapsto \omega''(X'') + \omega'(X').
\end{equation}
We denote it by $\left(L'' \ltimes_\delta L',\omega_\delta\right)$.
\end{enumerate}
\end{definition}

For the sake of brevity, from now on we will only speak about ideals and subalgebras without reporting the syntagma ``$A$-anchored Lie'' in front. Definition \ref{def:manydefs}\ref{item:L3} is consistent in view of the following results.

\begin{lemma}\label{lem:semidirect}
The semi-direct product $(L,\omega)$ of two $A$-anchored Lie algebras $(L',\omega')$ and $(L'',\omega'')$ is an $A$-anchored Lie algebra.
\end{lemma}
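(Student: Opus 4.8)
The plan is to verify directly that the datum $(L'' \ltimes_\delta L', \omega_\delta)$ satisfies the two axioms of an $A$-anchored Lie algebra from Definition~\ref{eq:Aanch}: that the bracket \eqref{eq:SDPbracket} makes $L'' \times L'$ into a Lie algebra over $\K$, and that the map \eqref{eq:SDPanchor} is a morphism of Lie algebras into $\Derk{A}$. First I would check bilinearity and antisymmetry of \eqref{eq:SDPbracket}, which are immediate from the bilinearity and antisymmetry of $[-,-]_{L''}$, $[-,-]_{L'}$ and the linearity of $\delta(X'')$; antisymmetry of the second component uses that $\delta(X'')(Y') - \delta(Y'')(X')$ is manifestly antisymmetric in the pair and that $[X',Y']$ is antisymmetric.

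The substantive point is the Jacobi identity for \eqref{eq:SDPbracket}. I would expand $\big[(X'',X'),\big[(Y'',Y'),(Z'',Z')\big]\big]$ and its two cyclic permutations and collect terms by component. In the first component one gets exactly the Jacobi identity for $L''$, so that part vanishes. In the second component one obtains a sum of terms of three types: (i) iterated brackets $[[X',Y'],Z']$ and cyclic permutations, which cancel by the Jacobi identity in $L'$; (ii) terms of the form $\delta([X'',Y''])(Z')$ and $\delta(X'')\delta(Y'')(Z')$, which cancel because $\delta:L'' \to \Derk{L'}$ is a Lie algebra morphism, i.e.\ $\delta([X'',Y'']) = [\delta(X''),\delta(Y'')] = \delta(X'')\delta(Y'') - \delta(Y'')\delta(X'')$; (iii) mixed terms of the form $\delta(X'')([Y',Z'])$ versus $[\delta(X'')(Y'),Z'] + [Y',\delta(X'')(Z')]$, which cancel precisely because $\delta(X'')$ is a derivation of the Lie algebra $L'$ (that is the content of $\delta(X'') \in \Derk{L'}$). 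The bookkeeping here is the one genuinely laborious step, and it is the main obstacle in the sense that it is easy to mis-sign a term; everything else is formal. Note that condition \eqref{eq:deltaomega} is \emph{not} needed for the Jacobi identity itself --- it enters only in the next verification.

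Finally I would check that $\omega_\delta$ of \eqref{eq:SDPanchor} is a Lie algebra morphism. Since $\omega_\delta(X'',X') = \omega''(X'') + \omega'(X')$ is linear, it remains to compare $\omega_\delta\big([(X'',X'),(Y'',Y')]\big)$ with $[\omega_\delta(X'',X'),\omega_\delta(Y'',Y')]$ in $\Derk{A}$. The left-hand side is
\[
\omega''([X'',Y'']) + \omega'\big(\delta(X'')(Y') - \delta(Y'')(X') + [X',Y']\big),
\]
while the right-hand side expands, using bilinearity of the bracket in $\Derk{A}$, into $[\omega''(X''),\omega''(Y'')] + [\omega''(X''),\omega'(Y')] + [\omega'(X'),\omega''(Y'')] + [\omega'(X'),\omega'(Y')]$. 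Matching the four summands: $[\omega''(X''),\omega''(Y'')] = \omega''([X'',Y''])$ because $\omega''$ is an anchor; $[\omega'(X'),\omega'(Y')] = \omega'([X',Y'])$ because $\omega'$ is an anchor; and the two cross terms match the $\delta$-terms exactly by hypothesis \eqref{eq:deltaomega} (and its antisymmetric counterpart $[\omega'(X'),\omega''(Y'')] = -[\omega''(Y''),\omega'(X')] = -\omega'(\delta(Y'')(X'))$, using $\omega'$ linear). This completes the proof that $(L,\omega_\delta)$ is an $A$-anchored Lie algebra.
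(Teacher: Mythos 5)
Your proposal is correct and follows essentially the same route as the paper: the anchor verification (expanding $[\omega_\delta(X'',X'),\omega_\delta(Y'',Y')]$ and matching the four terms via the anchor properties of $\omega'$, $\omega''$ and condition \eqref{eq:deltaomega}) is exactly the computation in the paper's proof. The only difference is that the paper dispenses with the Lie-algebra axioms in one line by citing the classical fact that $L''\ltimes_\delta L'$ is the usual semi-direct product of Lie algebras, whereas you re-verify the Jacobi identity by hand; your sketch of that bookkeeping (Jacobi in $L'$ and $L''$, $\delta$ a Lie algebra morphism, $\delta(X'')$ a derivation of $L'$) is accurate, and your observation that \eqref{eq:deltaomega} is needed only for the anchor step is also correct.
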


\begin{proof}
The fact that the semi-direct product is a Lie algebra follows from the fact that, as Lie algebras, it is the semi-direct product of $L'$ and $L''$. 
Thus we only need to check that $\omega_\delta$ is a Lie algebra morphism. To this aim, we compute directly
\begin{align*}
& \Big[\omega_\delta\big((X'', X')\big), \omega_\delta\big((Y'', Y') \big) \Big] \stackrel{\eqref{eq:SDPanchor}}{=} \Big[ \omega''(X'') + \omega'(X'), \omega''(Y'') + \omega'(Y') \Big] \\
  & \stackrel{\phantom{(45)}}{=} \Big[ \omega''(X''), \omega''(Y'') \Big] + \Big[ \omega'(X'), \omega''(Y'') \Big] + \Big[ \omega''(X''),  \omega'(Y') \Big] + \Big[  \omega'(X'),  \omega'(Y') \Big] \\
	& \stackrel{\eqref{eq:deltaomega}}{=} \omega''\big([X'',Y'']\big) + \omega'\big(\delta(X'')(Y')\big) - \omega'\big(\delta(Y'')(X')\big) + \omega'\big([X',Y']\big) \\
	& \stackrel{\eqref{eq:SDPanchor}}{=} \omega_\delta\Big(\big([X'',Y''], \delta(X'')(Y') - \delta(Y'')(X') + [X',Y']\big)\Big) \stackrel{\eqref{eq:SDPbracket}}{=} \omega_\delta\Big(\big[(X'', X'), (Y'', Y') \big] \Big)
\end{align*}
for all $X',Y'\in L'$ and all $X'',Y'' \in L''$.
\end{proof}

The following lemma should not be surprising.

\begin{lemma}\label{lem:sdprod}
Let $(L,\omega)$ be an $A$-anchored Lie algebra and let $(L',\omega')$ and $(L'',\omega'')$ be subalgebras of $(L,\omega)$. Then there exists a semi-direct product $\left(L'' \ltimes_\delta L',\omega_\delta\right)$ of $(L',\omega')$ and $(L'',\omega'')$ such that $L'' \ltimes_\delta L' \to L, (X'',X')\mapsto X'' + X'$, is an isomorphism of $A$-anchored Lie algebras if and only if
\begin{itemize}[leftmargin=0.7cm]
\item $(L',\omega')$ is an ideal in $(L,\omega)$;
\item $L = L'' + L'$ as $\K$-vector spaces;
\item $L'\cap L'' = 0$ (that is, $L = L'' \oplus L'$).
\end{itemize}
If this is the case, then $\delta: L'' \to \Derk{L'}$ is given by $X'' \mapsto [X'',-]$.
\end{lemma}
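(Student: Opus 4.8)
The plan is to prove this as the ``internal versus external semi-direct product'' characterization, treating the two implications separately.

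\textbf{The ``if'' direction.} Assume the three bulleted conditions hold. Since $(L',\omega')$ is an ideal in $(L,\omega)$, for every $X''\in L''$ and $X'\in L'$ one has $[X'',X']\in L'$, so we may \emph{define} $\delta\colon L''\to\Derk{L'}$ by $\delta(X'')(X')\coloneqq[X'',X']$. I would first check that $\delta$ is well posed: that $\delta(X'')$ is a derivation of $L'$ is exactly the Jacobi identity $[X'',[X',Y']]=[[X'',X'],Y']+[X',[X'',Y']]$, and that $\delta$ is a morphism of Lie algebras, $\delta([X'',Y''])=[\delta(X''),\delta(Y'')]$, is again Jacobi. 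Next, since $L'$ and $L''$ are subalgebras of $(L,\omega)$ their anchors are the restrictions of $\omega$, and $\omega$ is a morphism of Lie algebras; hence for $X''\in L''$ and $X'\in L'$
\[
\big[\omega''(X''),\omega'(X')\big]=\big[\omega(X''),\omega(X')\big]=\omega\big([X'',X']\big)=\omega'\big(\delta(X'')(X')\big),
\]
which is precisely the compatibility \eqref{eq:deltaomega}. Therefore $(L''\ltimes_\delta L',\omega_\delta)$ is defined and, by Lemma \ref{lem:semidirect}, it is an $A$-anchored Lie algebra. It then remains to verify that $\Psi\colon L''\ltimes_\delta L'\to L$, $(X'',X')\mapsto X''+X'$, is an isomorphism of $A$-anchored Lie algebras: it is bijective because $L=L''\oplus L'$; it preserves the bracket by expanding \eqref{eq:SDPbracket} with $\delta(X'')(X')=[X'',X']$ and regrouping the four resulting brackets into $[X''+X',Y''+Y']$; and it intertwines the anchors since $\omega_\delta(X'',X')=\omega''(X'')+\omega'(X')=\omega(X''+X')$ by \eqref{eq:SDPanchor} and linearity of $\omega$.

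\textbf{The ``only if'' direction.} Suppose such a semi-direct product exists and $\Psi\colon(X'',X')\mapsto X''+X'$ is an isomorphism of $A$-anchored Lie algebras. Surjectivity of $\Psi$ gives $L=L''+L'$. The kernel of the linear map $\Psi$ is $\{(-X',X')\mid X'\in L'\cap L''\}$, so injectivity forces $L'\cap L''=0$. For the ideal property, write an arbitrary $X\in L$ as $X=Y''+Y'$ with $Y''\in L''$, $Y'\in L'$; then for $X'\in L'$
\[
[X,X']=[Y'',X']+[Y',X'],
\]
where $[Y',X']\in L'$ since $L'$ is a subalgebra, while $[Y'',X']=\big[\Psi(Y'',0),\Psi(0,X')\big]=\Psi\big([(Y'',0),(0,X')]\big)=\Psi\big((0,\delta(Y'')(X'))\big)=\delta(Y'')(X')\in L'$ by \eqref{eq:SDPbracket}. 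Hence $[X,X']\in L'$, so $(L',\omega')$ is an ideal in $(L,\omega)$, the anchor being already a restriction. The same computation $[Y'',X']=\delta(Y'')(X')$ shows that necessarily $\delta(X'')=[X'',-]$, which is the closing assertion.

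\textbf{Main obstacle.} There is essentially no hard step here: the content is entirely formal once one records the two bookkeeping points. First, the hypothesis that $L'$ and $L''$ are \emph{subalgebras} of $(L,\omega)$ means their anchors are literally restrictions of $\omega$, so that \eqref{eq:deltaomega} collapses to a consequence of $\omega$ being a Lie-algebra morphism; second, one must carefully distinguish an element $X''\in L''\subseteq L$ from the corresponding pair $(X'',0)\in L''\ltimes_\delta L'$ when transporting brackets along $\Psi$. The only place the ideal hypothesis is genuinely used is to ensure that $\delta(X'')(X')=[X'',X']$ lands in $L'$, and the ``only if'' computation shows this choice of $\delta$ is unavoidable.
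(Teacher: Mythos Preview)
Your proof is correct and follows essentially the same strategy as the paper's own proof: define $\delta(X'')=[X'',-]_L$ in the ``if'' direction, verify \eqref{eq:deltaomega} using that $\omega',\omega''$ are restrictions of $\omega$, and check that $\Psi$ is a Lie-algebra isomorphism compatible with the anchors; in the ``only if'' direction, read off the three conditions from bijectivity of $\Psi$ and transport the bracket $[(Y'',0),(0,X')]$ through $\Psi$ to see $L'$ is an ideal and $\delta$ is forced. Your write-up is in fact slightly more explicit than the paper's (you spell out the Jacobi checks for $\delta$ and the kernel computation for $\Psi$), but the argument is the same.
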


\begin{proof}
In one direction, notice that $(L',\omega')$ is an ideal in $\left(L'' \ltimes_\delta L',\omega_\delta\right)$ via the canonical morphism $L'\to L'' \oplus L', X'\mapsto (0,X'),$ and that $(L'',\omega'')$ is a subalgebra of $\left(L'' \ltimes_\delta L',\omega_\delta\right)$ via the canonical morphism $L''\to L'' \oplus L', X''\mapsto (X'',0)$. Furthermore, one recovers $\delta$ as
\[\big[(X'',0),(0,X')\big] = \big(0,\delta(X'')(X')\big)\]
for all $X'\in L', X'' \in L''$. In the other direction, assume that $(L',\omega')$ is an ideal in $(L,\omega)$ and that $L = L'' \oplus L'$ as $\K$-vector spaces. Consider further the assignment $\delta: L'' \to \Derk{L'}$ given by $X'' \mapsto [X'',-]_L$ (which is well-defined because $L'$ is an ideal). It satisfies
\begin{align*}
\big[\omega''(X''),\omega'(X')\big] & = \omega(X'')\circ \omega(X') - \omega(X') \circ \omega(X'' ) \\
 & = \omega\big([X'',X']_L\big) = \omega\big(\delta(X'')(X')\big) = \omega'\big(\delta(X'')(X')\big)
\end{align*}
for all $X'\in L',X'' \in L'',$ which is \eqref{eq:deltaomega}, and so we may perform the semi-direct product $\left(L'' \ltimes_\delta L',\omega_\delta\right)$.
 Then
\begin{align*}
\big[X'' + X', Y'' + Y'\big]_L & = [X'', Y'']_L + [X', Y'' ]_L + [X'' ,  Y']_L + [ X',  Y']_L \\ 
 & = [X'', Y'']_{L''} + \big(\delta(X'')(Y') - \delta(Y'')(X') + [ X',  Y']_{L'}\big),
\end{align*}
so that $L'' \oplus L' \cong L$ is a morphism of Lie algebras and moreover
\[
\omega(X'' + X') = \omega(X'') + \omega(X') = \omega''(X'') + \omega'(X'),
\]
whence it is of $A$-anchored Lie algebras, too.
\end{proof}

\begin{remark}\label{rem:improper}
The reader has to be warned that, despite the definition of ideal and of semi-direct product in Definition \ref{def:manydefs} has been inspired by the subsequent Lemma \ref{lem:LieIdeal} and Proposition \ref{prop:primitive} and by the results in \S\ref{ssec:commutative}, they may turn out to be improper terminologies in the future. In fact, it is not true in general that the quotient of an $A$-anchored Lie algebra by an ideal is an $A$-anchored Lie algebra (unless the ideal has the zero anchor) or that $(L,\omega)$ is a semi-direct product of $(L',\omega')$ and $(L'',\omega'')$ if and only if there is a short exact sequence of $A$-anchored Lie algebras
\begin{equation}\label{eq:ses}
0 \to L' \xrightarrow{f} L \xrightarrow{g} L'' \to 0
\end{equation}
such that $g$ admits a section $\sigma$ which is a morphism of $A$-anchored Lie algebras. On the one hand, the canonical projection $L'' \ltimes_{\delta} L' \to L''$ is not a morphism of $A$-anchored Lie algebra because it is not compatible with the anchors. On the other hand, in order to have that $\omega'' \circ g = \omega$ and that $\omega \circ f = \omega '$, we should have had that
\[\omega'= \omega \circ f = \omega'' \circ g \circ f = 0,\]
which is not the case in general. 

What one may observe is that $(L,\omega)$ is a semi-direct product of $(L',\omega')$ and $(L'',\omega'')$ if and only if there is a short exact sequence of Lie algebras \eqref{eq:ses} such that $g$ admits a section $\sigma$ and both $f$ and $\sigma$ are morphisms of $A$-anchored Lie algebras (but $g$ is not, in general).
\end{remark}

% --------------------------------------------------------------------------------------------------------------------------------- %
% UNIVERSAL ENVELOPING RING AND PBW THEOREM
% --------------------------------------------------------------------------------------------------------------------------------- %

\subsection{A universal \texorpdfstring{$\env{A}$}{Ae}-ring construction}\label{ssec:UEA}

Assume that we are given an $A$-anchored Lie algebra $\left( L,\omega \right) $. Recall that we may consider the universal enveloping algebra $U_{\K}\left( L\right)$ of $L$ and that there is a canonical injective $\K$-linear map
\begin{equation}\label{eq:jL}
j_L:L \to U_\K(L), \qquad X \mapsto x,
\end{equation} 
which allows us to identify $X$ with its image $x$ in $U_\K(L)$. 
The anchor $\omega$ makes of $A$ a left representation of $L$ with $L$ acting as derivations, that is, we have a Lie algebra morphism
\begin{equation}\label{eq:omegaXder}
L\xrightarrow{\omega } \Derk{A} \subseteq \cL\left( \End{\K}{ A} \right)
\end{equation}
where $\cL\left( \End{\K}{A} \right) $ is the Lie algebra associated to the associative algebra $\left( \End{\K}{A} ,\circ ,\mathrm{id}_{A}\right) $. By the universal property of the universal enveloping algebra, there is a unique algebra morphism 
\[
\Omega :U_{\K}\left( L\right) \to \mathrm{End}_{\K}\left(A\right)  
\]
which extends $\omega $. 

\begin{lemma}\label{lem:modulealgebra}
The base algebra $A$ is naturally an $U_{\K }\left( L\right) $-module algebra.
\end{lemma}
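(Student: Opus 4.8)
The plan is to check that the algebra map $\Omega\colon U_\K(L)\to \End{\K}{A}$ furnished by the universal property of $U_\K(L)$ makes $A$ into a module over the associative algebra $U_\K(L)$, and that this module structure is compatible with the multiplication and unit of $A$ in the sense required of a module algebra, i.e.\ that each $u\in U_\K(L)$ acts as a ``generalized derivation'' governed by the coproduct of $U_\K(L)$: $u\cdot(ab)=\sum (u_{(1)}\cdot a)(u_{(2)}\cdot b)$ and $u\cdot 1_A = \varepsilon_{U_\K(L)}(u)\,1_A$, where $\Delta$ and $\varepsilon$ are the standard bialgebra structure maps of $U_\K(L)$.

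First I would note that $A$ is automatically a left $U_\K(L)$-module via $u\cdot a \coloneqq \Omega(u)(a)$, since $\Omega$ is a morphism of associative algebras into $\End{\K}{A}$. It remains to verify the module-algebra axioms. Since $U_\K(L)$ is generated as an algebra by the image of $j_L$ (together with the unit), and since both sides of the two desired identities are, for fixed $a,b\in A$, algebra maps $U_\K(L)\to A$ in a suitable sense — more precisely, the set of $u$ satisfying $u\cdot(ab)=\sum (u_{(1)}\cdot a)(u_{(2)}\cdot b)$ for all $a,b$ is a subalgebra of $U_\K(L)$ containing $1$ — it suffices to check the identities on generators $x = j_L(X)$ with $X\in L$. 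For such $x$ we have $\Delta(x) = x\otimes 1 + 1\otimes x$ and $\varepsilon(x)=0$, so the first identity reduces to the Leibniz rule $X\cdot(ab) = (X\cdot a)b + a(X\cdot b)$, which holds because $\omega(X)\in\Derk{A}$; and the second reduces to $X\cdot 1_A = 0$, again because $\omega(X)$ is a derivation. The unit $1_{U_\K(L)}$ acts as $\id_A$, which trivially satisfies both axioms.

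The only genuine point requiring care — and the step I expect to be the main (though still routine) obstacle — is the induction showing that the subset of $U_\K(L)$ on which the Leibniz-type identity holds is closed under multiplication: if $u,v$ both satisfy $u\cdot(ab)=\sum(u_{(1)}\cdot a)(u_{(2)}\cdot b)$ and likewise for $v$, then using that $\Delta$ is an algebra map, $\Delta(uv)=\sum u_{(1)}v_{(1)}\otimes u_{(2)}v_{(2)}$, one computes
\[
(uv)\cdot(ab) = u\cdot\bigl(v\cdot(ab)\bigr) = u\cdot\Bigl(\sum (v_{(1)}\cdot a)(v_{(2)}\cdot b)\Bigr) = \sum \bigl(u_{(1)}v_{(1)}\cdot a\bigr)\bigl(u_{(2)}v_{(2)}\cdot b\bigr),
\]
which is exactly $\sum\bigl((uv)_{(1)}\cdot a\bigr)\bigl((uv)_{(2)}\cdot b\bigr)$; the analogous closure for the unitality axiom is immediate. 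Combined with the base case on generators, this gives the claim for all of $U_\K(L)$, and hence $A$ is a $U_\K(L)$-module algebra. I would present this as a short computation rather than a lengthy one, since the heart of the matter is simply that the Leibniz rule on Lie generators propagates to the whole enveloping algebra via the primitivity of the generators and the multiplicativity of the coproduct.
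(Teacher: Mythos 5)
Your proof is correct. Note, however, that the paper does not spell out an argument at all: its proof of Lemma \ref{lem:modulealgebra} consists of the citation \cite[Example 6.1.13(3)]{dascalescu} together with the explicit formulas $x\cdot a=\omega(X)(a)$, $x\cdot 1_A=0$ and $u\cdot a=\Omega(u)(a)$. Your verification supplies exactly the standard argument behind that reference: the action is the one induced by the algebra map $\Omega:U_\K(L)\to\End{\K}{A}$, the set of elements $u$ satisfying $u\cdot(ab)=\sum(u_{(1)}\cdot a)(u_{(2)}\cdot b)$ for all $a,b$ (resp.\ $u\cdot 1_A=\varepsilon(u)1_A$) is a subalgebra of $U_\K(L)$ because $\Delta$ and $\varepsilon$ are multiplicative, and it contains $1$ and the primitive generators $j_L(L)$ precisely because each $\omega(X)$ lies in $\Derk{A}$; since $U_\K(L)$ is generated by $L$, the axioms hold everywhere. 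The only cosmetic slip is the phrase that ``both sides of the two desired identities are algebra maps $U_\K(L)\to A$ in a suitable sense,'' which is not literally accurate, but you immediately replace it with the correct formulation (closure of the measuring set under multiplication), so the argument stands as written and is a legitimate, self-contained substitute for the paper's citation.
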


\begin{proof}
See, for instance, \cite[Example 6.1.13(3)]{dascalescu}. Explicitly
\begin{equation}\label{eq:Uaction}
x \cdot a = \omega(X)(a), \qquad x \cdot 1_A \stackrel{\eqref{eq:omegaXder}}{=} 0 \qquad \text{and} \qquad u\cdot a = \Omega(u)(a)
\end{equation}
for all $X \in L$, $u \in U_\K(L)$ and $a\in A$.
\end{proof}

By Lemma \ref{lem:modulealgebra}, we may consider the Connes-Moscovici's $A$-bialgebroid $A\odot U_{\K}\left( L\right) \odot A$. For the sake of simplicity, we will often denote it by $\cB_L$. As an $\env{A}$-ring, it comes endowed with a Lie algebra morphism
\[
J_L \colon L \to \cL\left(\cB_L\right), \qquad X \mapsto 1 \otimes x \otimes 1,
\] 
which satisfies
\begin{equation}\label{eq:JL2}
\Big[J_L(X),\eta_{\cB_L}(a \otimes \op{b})\Big] = \eta_{\cB_L}(X\cdot(a \otimes \op{b}))
\end{equation}
for all $a,b \in A$ and all $X \in L$, where
\[X\cdot\left(a \otimes \op{b}\right) = \left(X\cdot a\right) \otimes \op{b} + a \otimes \op{\left(X\cdot b\right)}\]
is the $L$-module structure on the tensor product of two $L$-modules. Equivalently, $\eta_{\cB_L} : \env{A} \to \cB_L$ is a morphism of $L$-modules, where $\cB_L$ has the $L$-module structure induced by $J_L$. 

The $\env{A}$-ring $\cB_L$ with $J_L$ is universal among pairs $(\cR,\phi_L)$ satisfying these properties.

\begin{theorem}\label{thm:UEA}
Given an $\env{A}$-ring $\cR$ with $\K$-algebra morphism $\phi_A : \env{A} \to \cR$ and given a Lie algebra morphism $\phi_L:L \to \cL(\cR)$ such that
\begin{equation}\label{eq:Llin}
\big[\phi_L(X),\phi_A(a \otimes \op{b})\big] = \phi_A\left(X \cdot (a \otimes \op{b})\right),
\end{equation}
for all $a,b \in A$ and all $X \in L$, there exists a unique morphism of $\env{A}$-rings $\Phi:\cB_L \to \cR$ such that $\Phi\circ J_L = \phi_L$. It is explicitly given by
\begin{equation}\label{eq:defPhi}
\Phi:\cB_L \to \cR, \qquad a \otimes u \otimes b \mapsto \phi_A(a \otimes \op{b})\phi'(u),
\end{equation}
where $\phi': U_\K(L) \to \cR$ is the unique morphism of $\K$-algebras such that $\phi'\circ j_L = \phi_L$.
\end{theorem}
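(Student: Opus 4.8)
The plan is to verify that the formula \eqref{eq:defPhi} does define a morphism of $\env{A}$-rings and that it is the unique such extension. First I would check that $\Phi$ is a well-defined $\K$-linear map: since $\cB_L = A \otimes U_\K(L) \otimes A$ as a vector space, and $\phi'$ exists and is unique by the universal property of $U_\K(L)$ applied to $\phi_L : L \to \cL(\cR)$, the assignment $a \otimes u \otimes b \mapsto \phi_A(a \otimes \op{b})\phi'(u)$ is clearly well-defined $\K$-linearly. Next I would check that $\Phi$ is unital, $\Phi(1_A \otimes 1_{U_\K(L)} \otimes 1_A) = \phi_A(1 \otimes \op{1})\phi'(1) = 1_\cR$, and that it is compatible with the $\env{A}$-ring structure, i.e. $\Phi \circ \eta_{\cB_L} = \phi_A$; this is immediate from $\eta_{\cB_L}(a \otimes \op{b}) = s_{\cB_L}(a)t_{\cB_L}(\op{b}) = (a \otimes 1_H \otimes 1)(1 \otimes 1_H \otimes b) = a \otimes 1_H \otimes b$ and $\phi'(1) = 1$.

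The main obstacle, and the step that requires genuine computation, is showing that $\Phi$ is multiplicative, i.e. that it respects the Connes-Moscovici product \eqref{eq:CMprod}. Here I would use the decomposition $a \otimes u \otimes b = (a \triangleright (1 \otimes u \otimes 1) \triangleleft b)$, or more concretely factor a general element as a product of $s_{\cB_L}(a)$, $t_{\cB_L}(\op{b})$ and $1 \otimes u \otimes 1$, and reduce multiplicativity on all of $\cB_L$ to three ingredients: (i) multiplicativity of $\phi_A$ on $\env{A}$ (given); (ii) multiplicativity of $\phi'$ on $U_\K(L)$ (given by its definition); and (iii) the commutation relation between $\phi'(x) = \phi_L(X)$ and $\phi_A(a \otimes \op{b})$, which is exactly hypothesis \eqref{eq:Llin}. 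The key point is that in $\cB_L$ the relation $(1 \otimes x \otimes 1)(a \otimes 1_H \otimes b) - (a \otimes 1_H \otimes b)(1 \otimes x \otimes 1) = \eta_{\cB_L}(X \cdot (a \otimes \op{b}))$ holds (this is \eqref{eq:JL2}, and it follows from \eqref{eq:CMprod} together with the fact that in $U_\K(L)$ the element $x$ is primitive so $\Delta(x) = x \otimes 1 + 1 \otimes x$, giving $x_{(1)} \cdot a' \otimes x_{(2)}x' \otimes \cdots$), and applying $\Phi$ to this identity and using \eqref{eq:Llin} one propagates the $U_\K(L)$-part past the $\env{A}$-part consistently. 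Since $\cB_L$ is generated as an algebra by $\eta_{\cB_L}(\env{A})$ together with $\{1 \otimes x \otimes 1 \mid X \in L\}$ (indeed $a \otimes u \otimes b$ is a sum of products of such generators, because $U_\K(L)$ is generated by $L$), it suffices to check multiplicativity on these generators, where it reduces to (i), (ii), (iii) above.

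Having established that $\Phi$ is a morphism of $\K$-algebras compatible with the $\env{A}$-ring structures, I would note $\Phi \circ J_L = \phi_L$ directly: $\Phi(1 \otimes x \otimes 1) = \phi_A(1 \otimes \op{1})\phi'(x) = \phi_L(X)$. For uniqueness, suppose $\Psi : \cB_L \to \cR$ is another morphism of $\env{A}$-rings with $\Psi \circ J_L = \phi_L$. Then $\Psi \circ \eta_{\cB_L} = \phi_A$ (being a morphism of $\env{A}$-rings) and $\Psi(1 \otimes x \otimes 1) = \phi_L(X)$ for all $X \in L$; since $\cB_L$ is generated as an algebra by the images of $\eta_{\cB_L}$ and of $J_L$, and $\Psi$ is multiplicative, $\Psi$ is determined on all of $\cB_L$ and must agree with $\Phi$. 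More explicitly, $\Psi(a \otimes u \otimes b) = \Psi(s_{\cB_L}(a)) \Psi(1 \otimes u \otimes 1) \Psi(t_{\cB_L}(\op{b})) = \phi_A(a \otimes \op{1}) \phi'(u) \phi_A(1 \otimes \op{b}) = \phi_A(a \otimes \op{b})\phi'(u)$, using that $\phi_A(a \otimes \op{1})$ and $\phi_A(1 \otimes \op{b})$ commute, which is precisely $\Phi(a \otimes u \otimes b)$; here $\Psi(1 \otimes u \otimes 1) = \phi'(u)$ follows because $u$ is a $\K$-linear combination of products of elements of $L$ in $U_\K(L)$ and $\Psi$ restricted to the subalgebra $1 \otimes U_\K(L) \otimes 1$ is an algebra map sending $x \mapsto \phi_L(X)$, hence equals $\phi'$ by uniqueness in the universal property of $U_\K(L)$.
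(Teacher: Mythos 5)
Your overall route is the paper's: define $\Phi$ by the explicit formula \eqref{eq:defPhi}, observe $\Phi\circ\eta_{\cB_L}=\phi_A$ and $\Phi\circ J_L=\phi_L$, and reduce everything to the fact that \eqref{eq:Llin} lets one move the $U_\K(L)$-part past the $\env{A}$-part. However, two steps, as written, do not hold up. First, the reduction ``$\cB_L$ is generated by $\eta_{\cB_L}(\env{A})$ and the $J_L(X)$'s, so it suffices to check multiplicativity on these generators'' is not a valid principle here: $\Phi$ is only a linear map given by a formula on a basis, not a map obtained from a presentation, so multiplicativity on pairs of generators says nothing about products of longer words. What does suffice is to check $\Phi(\xi g)=\Phi(\xi)\Phi(g)$ for \emph{arbitrary} $\xi\in\cB_L$ and $g$ a generator; for $g=J_L(X)$ this is immediate, but for $g=\eta_{\cB_L}(a'\otimes\op{b'})$ it amounts precisely to the identity $\phi'(u)\phi_A(a'\otimes\op{b'})=\sum\phi_A\big((u_{(1)}\cdot a')\otimes\op{(u_{(3)}\cdot b')}\big)\phi'\left(u_{(2)}\right)$ for \emph{all} $u\in U_\K(L)$, which is the paper's \eqref{eq:techUEA} and is proved by induction on PBW monomials from \eqref{eq:Llin}. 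Your ingredient (iii) only covers the case $u=x$ with $X\in L$, and the ``propagation'' you allude to is exactly this missing induction; so the real work of the proof is asserted rather than reduced to (i)--(iii).

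Second, in the uniqueness paragraph the factorization $a\otimes u\otimes b=s_{\cB_L}(a)\,(1\otimes u\otimes1)\,t_{\cB_L}(\op{b})$ is false in general: by \eqref{eq:CMprod} one has $(a\otimes u\otimes1)(1_A\otimes 1_U\otimes b)=\sum a\otimes u_{(1)}\otimes(u_{(2)}\cdot b)$, not $a\otimes u\otimes b$; and the subsequent step $\phi_A(a\otimes\op{1})\phi'(u)\phi_A(1\otimes\op{b})=\phi_A(a\otimes\op{b})\phi'(u)$ silently commutes $\phi'(u)$ past $\phi_A(1\otimes\op{b})$, which fails in general (by \eqref{eq:Llin} they commute only up to derivation terms). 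The repair is the decomposition you yourself wrote at the start of the multiplicativity discussion, namely $a\otimes u\otimes b=a\triangleright(1\otimes u\otimes1)\triangleleft b=\eta_{\cB_L}(a\otimes\op{b})\,(1\otimes u\otimes1)$, with the whole $\env{A}$-factor on the left; with it, $\Psi(a\otimes u\otimes b)=\phi_A(a\otimes\op{b})\phi'(u)$ follows at once from your (correct) observation that $\Psi$ restricted to $1_A\otimes U_\K(L)\otimes 1_A$ must coincide with $\phi'$ by the universal property of $U_\K(L)$, and this matches the paper's uniqueness claim.
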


\begin{proof}
By the universal property of the universal enveloping $\K$-algebra $U_\K(L)$, there exists a unique morphism of $\K$-algebras $\phi': U_\K(L) \to \cR$ such that $\phi'\circ j_L = \phi_L$. Now, set $U \coloneqq U_\K(L)$ and consider the $\K$-linear map
\[
\Phi:A \odot  U \odot  A \to \cR, \qquad a \otimes u \otimes b \mapsto \phi_A(a \otimes \op{b})\phi'(u)
\]
of \eqref{eq:defPhi}.
It follows immediately from the definition that
\[\Phi \circ \eta_{\cB_L} = \phi_A \qquad \text{and} \qquad \Phi\circ J_L = \phi_L.\]
Now, a straightforward check using \eqref{eq:Llin} and induction on a PBW basis of $U$ shows that
\begin{equation}\label{eq:techUEA}
\phi'(u) \phi_A(a \otimes \op{b}) = \sum\phi_A\left((u_{(1)}\cdot a)\otimes \op{(u_{(3)}\cdot b)}\right)\phi'\left(u_{(2)}\right)
\end{equation}
for all $u \in U$ and $a,b \in A$. In view of this, we have
\begin{align*}
\Phi\left((a \otimes u \otimes b)(a'\otimes v \otimes b')\right) & \stackrel{\eqref{eq:CMprod}}{=} \sum\Phi\left(a(u_{(1)}\cdot a') \otimes u_{(2)}v \otimes (u_{(3)}\cdot b')b\right) \\
 & \stackrel{\eqref{eq:defPhi}}{=} \sum \phi_A\left(a(u_{(1)}\cdot a') \otimes \op{\left((u_{(3)}\cdot b')b\right)}\right)\phi'\left(u_{(2)}v\right) \\
 & \stackrel{\phantom{(35)}}{=} \sum \phi_A\left(a \otimes \op{b}\right)\phi_A\left((u_{(1)}\cdot a')\otimes \op{(u_{(3)}\cdot b')}\right)\phi'\left(u_{(2)}\right)\phi'\left(v\right) \\
 & \stackrel{\eqref{eq:techUEA}}{=} \phi_A\left(a \otimes \op{b}\right)\phi'\left(u\right)\phi_A\left( a'\otimes \op{{b'}}\right)\phi'\left(v\right) \\
 & \stackrel{\phantom{(35)}}{=} \Phi\left(a \otimes u \otimes b\right)\Phi\left(a'\otimes v \otimes b'\right)
\end{align*}
for all $u,v \in U$, $a,a',b,b'\in A$. Thus, $\Phi$ is a morphism of $\env{A}$-rings and it is clearly the unique satisfying $\Phi\circ J_L = \phi_L$.
\end{proof}

\begin{corollary}\label{cor:representations}
Given an $A$-anchored Lie algebra $(L,\omega)$, any representation $\rho:L \to \End{\K}{M}$ of $L$ into an $A$-bimodule $M$ satisfying the Leibniz condition
\begin{equation}\label{eq:Leibniz}
\rho(X)(a\cdot m \cdot b) = a\cdot \rho(X)(m) \cdot b + \omega(X)(a)\cdot m \cdot b + a \cdot m \cdot \omega(X)(b)
\end{equation}
for all $a,b \in A$, $X \in L$ and $ m\in M$, makes of $M$ a left $A \odot  U_\K(L) \odot  A$-module, and conversely.
\end{corollary}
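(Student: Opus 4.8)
The plan is to derive Corollary~\ref{cor:representations} directly from Theorem~\ref{thm:UEA} by translating ``$M$ is an $A\odot U_\K(L)\odot A$-module'' into ``there is an $\env{A}$-ring morphism $\cB_L\to \End{\K}{M}$'' and then recognizing the latter, via the universal property, as exactly the data of an $\env{A}$-bimodule structure together with a compatible Lie action. First I would recall that for any $\K$-algebra $\cR$, an $\cR$-module structure on a vector space $M$ is the same thing as a $\K$-algebra morphism $\cR\to\End{\K}{M}$; applying this to $\cR=\cB_L$, a $\cB_L$-module is a $\K$-algebra map $\Phi:\cB_L\to\End{\K}{M}$. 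Composing with $\eta_{\cB_L}:\env{A}\to\cB_L$ gives a $\K$-algebra map $\phi_A\coloneqq \Phi\circ\eta_{\cB_L}:\env{A}\to\End{\K}{M}$, which is precisely an $A$-bimodule structure on $M$ (with $a\cdot m\cdot b=\phi_A(a\otimes\op b)(m)$), so that $\Phi$ is in fact a morphism of $\env{A}$-rings once $M$ carries that bimodule structure. Composing instead with $J_L:L\to\cL(\cB_L)$ gives a Lie algebra morphism $\phi_L\coloneqq\Phi\circ J_L:L\to\cL(\End{\K}{M})$, i.e. a representation $\rho$ of $L$ on $M$.

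The key step is then to check that, under this dictionary, condition~\eqref{eq:JL2} satisfied by $(\cB_L,J_L,\eta_{\cB_L})$ forces the compatibility~\eqref{eq:Llin} for $(\phi_A,\phi_L)$, and conversely that~\eqref{eq:Llin} is literally the Leibniz condition~\eqref{eq:Leibniz}. Indeed, applying $\Phi$ to~\eqref{eq:JL2} and using that $\Phi$ is an algebra map gives $[\phi_L(X),\phi_A(a\otimes\op b)]=\phi_A(X\cdot(a\otimes\op b))=\phi_A((X\cdot a)\otimes\op b+a\otimes\op{(X\cdot b)})$, which is~\eqref{eq:Llin}. Evaluating both sides of~\eqref{eq:Llin} at $m\in M$ and expanding $\phi_A$ in terms of the bimodule action and $\phi_L(X)=\rho(X)$, the left-hand side becomes $\rho(X)(a\cdot m\cdot b)-a\cdot\rho(X)(m)\cdot b$ and the right-hand side becomes $\omega(X)(a)\cdot m\cdot b+a\cdot m\cdot\omega(X)(b)$ (using $X\cdot a=\omega(X)(a)$ from Definition~\ref{eq:Aanch}); rearranging yields exactly~\eqref{eq:Leibniz}. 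Thus a $\cB_L$-module structure on $M$ yields an $A$-bimodule structure together with an $L$-representation $\rho$ satisfying~\eqref{eq:Leibniz}.

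For the converse, starting from an $A$-bimodule $M$ with anchor-compatible representation $\rho:L\to\End{\K}{M}$, I take $\cR=\End{\K}{M}$, let $\phi_A:\env{A}\to\End{\K}{M}$ encode the bimodule structure and $\phi_L=\rho$; the computation above (run backwards) shows that~\eqref{eq:Leibniz} gives~\eqref{eq:Llin}, so Theorem~\ref{thm:UEA} produces a unique $\env{A}$-ring morphism $\Phi:\cB_L\to\End{\K}{M}$ with $\Phi\circ J_L=\rho$ and $\Phi\circ\eta_{\cB_L}=\phi_A$, i.e. a $\cB_L$-module structure on $M$ inducing the given bimodule structure and representation. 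The two assignments are mutually inverse: one direction is the uniqueness clause of Theorem~\ref{thm:UEA} (a $\cB_L$-module map is determined by its restrictions along $\eta_{\cB_L}$ and $J_L$, since these generate $\cB_L=A\odot U_\K(L)\odot A$ as an algebra—indeed $a\otimes u\otimes b = s_\cB(a)t_\cB(\op b)$ times a product of $J_L$-images by the PBW description), and the other is immediate from the explicit formula~\eqref{eq:defPhi}.

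I do not expect any serious obstacle here; the only point requiring a little care is making sure the $A$-bimodule structure on $M$ that appears as ``input'' in one direction really is recovered as ``output'' in the other, i.e. that the correspondence is a genuine bijection and not merely a surjection, which is handled by the uniqueness in Theorem~\ref{thm:UEA} together with the fact that $\{s_\cB(a)t_\cB(\op b)\mid a,b\in A\}$ and $J_L(L)$ generate $\cB_L$ as a $\K$-algebra. One should also note in passing that the condition $\rho(X)(1_A\cdot m\cdot 1_A)=\cdots$ forces nothing new, and that the well-definedness of ``$M$ a left module'' requires no compatibility beyond~\eqref{eq:Leibniz} precisely because~\eqref{eq:Leibniz} is equivalent to~\eqref{eq:Llin}, which is the only hypothesis of Theorem~\ref{thm:UEA} besides the two structure maps.
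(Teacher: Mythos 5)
Your proposal is correct and takes essentially the same route as the paper: one direction is Theorem \ref{thm:UEA} applied to $\cR=\End{\K}{M}$ with $\phi_A$ encoding the $A$-bimodule structure and $\phi_L=\rho$, since \eqref{eq:Leibniz} is precisely \eqref{eq:Llin} evaluated on $M$. For the other direction the paper recomputes the commutator relation directly from the product \eqref{eq:CMprod}, whereas you obtain it by pushing \eqref{eq:JL2} forward along the structure map $\Phi$ --- a cosmetic difference only.
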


\begin{proof}
If $M$ is an $A$-bimodule, then the assignments
\[s_E:A \to \End{\K}{M}, \ a \mapsto [m \mapsto a\cdot m], \qquad \text{and}\qquad t_E:\op{A} \to \End{\K}{M}, \ \op{b} \mapsto [m \mapsto m\cdot b],\]
make of $\End{\K}{M}$ an $\env{A}$-ring with $\phi_A: \env{A} \to \End{\K}{M}, a\otimes \op{b} \mapsto s_E(a)t_E(\op{b})$. If we consider the Lie algebra morphism $\phi_L \coloneqq \rho$, then equation \eqref{eq:Leibniz} is exactly condition \eqref{eq:Llin} and hence there is a unique morphism of $\env{A}$-rings $R:A \odot  U_\K(L) \odot   A \to \End{\K}{M}$ such that $R \circ J_L = \rho$. The other way around, if we have a morphism of $\env{A}$-rings $R : A \odot  U_\K(L) \odot   A \to \End{\K}{M}$ and we compose it with $J_L$ we get a Lie algebra morphism $\phi_L : L \to \End{\K}{M}$ such that
\begin{align*}
\phi_L(X)(a\cdot m\cdot b) & \stackrel{\phantom{(20)}}{=} \left(R(1_A\otimes x\otimes 1_A) \circ \phi_A(a \otimes \op{b})\right)(m) \\
 & \stackrel{\phantom{(20)}}{=} R\left((1_A\otimes x\otimes 1_A)(a \otimes 1_U \otimes b)\right) (m) \\
 & \stackrel{\eqref{eq:CMprod}}{=} R\left((a\otimes x\otimes b) + (\omega(X)(a) \otimes 1_U \otimes b) + (a \otimes 1_U \otimes \omega(X)(b))\right) (m) \\ 
 & \stackrel{\phantom{(20)}}{=} a\cdot \phi_L(X)(m) \cdot b + \omega(X)(a)\cdot m \cdot b + a \cdot m \cdot \omega(X)(b). \qedhere
\end{align*}
\end{proof}

\begin{remark}\label{rem:PanaiteOystaeyen}
Observe that the algebra maps $\phi':U_\K(L) \to \cR$ and $\phi_A:\env{A} \to \cR$ satisfy \eqref{eq:techUEA} if and only if they satisfy
\[\sum \phi'\left(u_{(1)}\right)\phi_A\Big(a \otimes \op{\big(S\left(u_{(2)}\right) \cdot b\big)}\Big) = \sum \phi_A(u_{(1)} \cdot a \otimes \op{b})\phi'(u_{(2)})\]
for all $u \in U$, $a,b \in A$. This implies that the $\env{A}$-ring morphism $\Phi:A\odot  U_\K(L)\odot  A \to \cR$ in Theorem \ref{thm:UEA} coincides, up to the isomorphism of $A$-bialgebroids from \cite[Theorem 2.5]{PanaiteOystaeyen}, with the algebra map $\omega$ of \cite[Proposition 3.1]{PanaiteOystaeyen}.
\end{remark}

\begin{proposition}\label{prop:PBW}
Let $\mb{B} \coloneqq \left\{X_\alpha\mid \alpha \in S\right\}$ be a $\K$-basis for $L$, where $S$ is an ordered set of indexes. Then $\cB_L$ is a free left $\env{A}$-module with basis given by $1_A \otimes 1_U\otimes 1_A$ and $1_A \otimes x_{\alpha_1}\cdots x_{\alpha_n} \otimes 1_A$ where the $x_{\alpha_1}\cdots x_{\alpha_n}$'s are the cosets of the standard monomials $X_{\alpha_1}\cdots X_{\alpha_n}$ in the basis $\mb{B}$ (see \cite[\S V.2]{Jacobson}).
\end{proposition}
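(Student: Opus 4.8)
The plan is to reduce this assertion about the Connes--Moscovici bialgebroid $\cB_L = A \odot U_\K(L) \odot A$ to the classical Poincar\'e--Birkhoff--Witt theorem for $U \coloneqq U_\K(L)$. First I would recall that, as a vector space, $\cB_L = A \otimes U \otimes A$ and that the left $\env{A}$-action on it is, by \eqref{eq:triangles} and the formula for $s_{\cB_L}, t_{\cB_L}$ in Example \ref{ex:bialgebroids}, given by $a \triangleright (a' \otimes u \otimes b') \triangleleft b = s_{\cB_L}(a)t_{\cB_L}(\op{b})(a' \otimes u \otimes b') = (a \otimes 1_H \otimes 1_A)(1_A \otimes 1_H \otimes b)(a' \otimes u \otimes b')$, which by the product \eqref{eq:CMprod} (with trivial $H$-action on the left factors since $1_H$ acts trivially) equals $aa' \otimes u \otimes b'b$. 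Hence the left $\env{A}$-module structure on $\cB_L$ is simply $A \otimes U \otimes A$ with $A$ acting on the left on the first tensorand and $\op{A}$ acting on the left on the third tensorand, i.e. $\cB_L \cong \env{A} \otimes U$ as a left $\env{A}$-module, where $\env{A}$ acts only on the $\env{A}$-factor.

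Once this identification is in place, the statement is immediate: by the classical PBW theorem (as in \cite[\S V.2]{Jacobson}), the standard monomials $x_{\alpha_1}\cdots x_{\alpha_n}$ with $\alpha_1 \leq \cdots \leq \alpha_n$ (together with $1_U$) form a $\K$-basis of $U$. Therefore $\{1_A \otimes 1_U \otimes 1_A\} \cup \{1_A \otimes x_{\alpha_1}\cdots x_{\alpha_n} \otimes 1_A\}$ is obtained from a $\K$-basis of $U$ by tensoring with the free generator of $\env{A}$ as a module over itself, so it is a basis of $\env{A} \otimes U \cong \cB_L$ as a left $\env{A}$-module. Concretely, an arbitrary element $\sum_i a_i \otimes u_i \otimes b_i$ of $\cB_L$ can be rewritten, after expanding each $u_i$ in the PBW basis, as a left $\env{A}$-linear combination $\sum_w \eta_{\cB_L}(c_w)(1_A \otimes w \otimes 1_A)$ over standard monomials $w$ (including the empty one), and this expression is unique because the $w$'s are $\K$-linearly independent in $U$ and $A \otimes U \otimes A$ is the tensor product of vector spaces.

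The only point requiring a small amount of care is to verify the two displayed computations showing $s_{\cB_L}(a)t_{\cB_L}(\op{b}) \cdot (a' \otimes u \otimes b') = aa' \otimes u \otimes b'b$ and thereby confirm that the $\env{A}$-module structure is the ``untwisted'' one; this is a direct application of \eqref{eq:CMprod} using $\Delta(1_H) = 1_H \otimes 1_H \otimes 1_H$ and $1_H \cdot a = \varepsilon_H(1_H)a = a$, and it is essentially the content of the two aligned equations at the end of Example \ref{ex:bialgebroids}\ref{item:enveloping}--type computations already displayed there (the $a \triangleright m \triangleleft b$ line). I do not anticipate any genuine obstacle: the heart of the matter is the classical PBW theorem for $U_\K(L)$, which we are free to invoke, and everything else is bookkeeping about which tensor factor the $\env{A}$-action touches. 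One should, however, state explicitly that the $\env{A}$-module in question is the left one induced by $\eta_{\cB_L}$ as in the Notation section, so that there is no ambiguity between the several $A$-module structures $\cB_L$ carries.

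\begin{proof}
Recall from Example \ref{ex:bialgebroids} that, as a vector space, $\cB_L = A \otimes U \otimes A$ with $U \coloneqq U_\K(L)$, and that $s_{\cB_L}(a) = a \otimes 1_U \otimes 1_A$, $t_{\cB_L}(\op{b}) = 1_A \otimes 1_U \otimes b$. Using the product \eqref{eq:CMprod}, together with $\Delta_U(1_U) = 1_U \otimes 1_U \otimes 1_U$ and $1_U \cdot a = a$ for all $a \in A$, one computes for every $a,a',b,b' \in A$ and $u \in U$:
\begin{align*}
a \triangleright \left(a' \otimes u \otimes b'\right) \triangleleft b & \stackrel{\eqref{eq:triangles}}{=} \left(a \otimes 1_U \otimes 1_A\right)\left(1_A \otimes 1_U \otimes b\right)\left(a' \otimes u \otimes b'\right) \\
& \stackrel{\eqref{eq:CMprod}}{=} \left(a \otimes 1_U \otimes b\right)\left(a' \otimes u \otimes b'\right) \stackrel{\eqref{eq:CMprod}}{=} aa' \otimes u \otimes b'b.
\end{align*}
Hence the left $\env{A}$-module structure on $\cB_L$ induced by $\eta_{\cB_L}$ is the one for which $\env{A}$ acts only on the outer tensorands: the map $\env{A} \otimes U \to \cB_L$, $(a \otimes \op{b}) \otimes u \mapsto \eta_{\cB_L}(a \otimes \op{b})(1_A \otimes u \otimes 1_A) = a \otimes u \otimes b$, is an isomorphism of left $\env{A}$-modules, where $\env{A} \otimes U$ carries the free $\env{A}$-module structure coming from multiplication on the first factor. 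In particular $\cB_L$ is a free left $\env{A}$-module on any $\K$-basis of $U$, via $w \mapsto 1_A \otimes w \otimes 1_A$.

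By the classical Poincar\'e--Birkhoff--Witt theorem (see \cite[\S V.2]{Jacobson}), if $\mb{B} = \{X_\alpha \mid \alpha \in S\}$ is a $\K$-basis of $L$ with $S$ ordered, then $1_U$ together with the cosets $x_{\alpha_1}\cdots x_{\alpha_n}$ of the standard monomials $X_{\alpha_1}\cdots X_{\alpha_n}$ (with $\alpha_1 \leq \cdots \leq \alpha_n$, $n \geq 1$) form a $\K$-basis of $U$. Transporting this basis along the isomorphism above, we conclude that $\cB_L$ is a free left $\env{A}$-module with basis $\{1_A \otimes 1_U \otimes 1_A\} \cup \{1_A \otimes x_{\alpha_1}\cdots x_{\alpha_n} \otimes 1_A\}$, as claimed.
\end{proof}
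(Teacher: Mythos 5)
Your proof is correct and follows exactly the route the paper takes: the paper's own proof simply invokes the classical PBW theorem together with the left $\env{A}$-module structure on $\cB_L$ (which, as you verify via \eqref{eq:triangles} and \eqref{eq:CMprod}, is the ``untwisted'' one, $a\triangleright(a'\otimes u\otimes b')\triangleleft b = aa'\otimes u\otimes b'b$, already displayed in Example \ref{ex:bialgebroids}). You have merely spelled out the bookkeeping that the paper leaves implicit, so there is nothing to correct.
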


\begin{proof}
It follows from the PBW theorem (see, for instance, \cite[Theorem V.2.3]{Jacobson}) and the definition of the left $\env{A}$-module structure on $\cB_L$.
\end{proof}

% --------------------------------------------------------------------------------------------------------------------------------- %
% CONNES-MOSCOVICI
% --------------------------------------------------------------------------------------------------------------------------------- %

\section[Connes-Moscovici's bialgebroid as universal enveloping bialgebroid]{\for{toc}{The Connes-Moscovici's bialgebroid as universal enveloping bialgebroid}\except{toc}{The Connes-Moscovici's bialgebroid as universal enveloping bialgebroid}}\label{sec:MM}

Our next aim is to prove that the Connes-Moscovici's bialgebroid $\cB_L=A \odot U_\K(L) \odot A$ satisfies a universal property as $A$-bialgebroid as well, in the form of an adjunction between the category of $A$-anchored Lie algebras and the category of $A$-bialgebroids.

% --------------------------------------------------------------------------------------------------------------------------------- %
% PRIMITIVE FUNCTOR
% --------------------------------------------------------------------------------------------------------------------------------- %

\subsection{The primitive functor}\label{ssec:primitives}

In light of Example \ref{Ex:anchored}\ref{ex:itemPrim}, we may consider the assignment
\begin{equation*}
\mathbb{P}:\Bialgd_{A}\to \ALie{A}
\end{equation*}
given on objects by $\mathbb{P}\left( \cB \right) =\left( \prim \left(
\cB \right) ,\omega _{\cB }\right) $ and on morphisms by simply (co)restricting any $\phi :\cB \to \cB ^{\prime }$ to the primitive elements, that is $\phi \circ \theta_\cB = \theta_{\cB'}\circ \bP(\phi)$.
The latter gives a well-defined morphism of $A$-anchored Lie algebras because it is compatible with the commutator bracket %(it comes from a morphism of algebras) 
and
\begin{align*}
\omega _{\cB ^{\prime }}\left( \phi \left( X\right) \right) \left( a\right) & = \varepsilon _{\cB ^{\prime }}\left( \phi \left( X\right) t_{\cB ^{\prime }}\left( \op{a}\right) \right) =\varepsilon _{\cB ^{\prime }}\left( \phi \left( X\right) \phi \left( t_{\cB }\left( \op{a}\right) \right) \right) \\
 & = \varepsilon _{\cB ^{\prime }}\left( \phi \left( Xt_{\cB }\left( \op{a}\right) \right) \right) =\varepsilon _{\cB }\left( Xt_{\cB }\left( \op{a}\right) \right) =\omega _{\cB }\left( X\right) \left( a\right)
\end{align*}
for all $X\in \prim \left( \cB \right) $ and $a\in A$. 
Summing up, we have the following result.

\begin{proposition}\label{prop:PP}
There is a well-defined functor $\mathbb{P}:\Bialgd_{A}\to \ALie{A}$ which assigns to every $A$-bialgebroid $\cB $ its Lie algebra of primitive elements $\prim \left( \cB \right) $ with anchor $\omega _{\cB }:\prim \left( \cB \right) \to  \Derk{A} $ sending $X$ to 
\begin{equation}\label{eq:omegaaction}
\omega _{\cB }\left( X\right) :A\to A, \qquad a\mapsto \omega _{\cB }\left(X\right) \left( a\right) = X\centerdot a.  
\end{equation}
\end{proposition}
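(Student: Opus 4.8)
The statement is essentially bookkeeping: most of the pointwise content is already in place, and the plan is to organize it. On objects there is nothing new to prove, since Example~\ref{Ex:anchored}\ref{ex:itemPrim} already establishes that $\prim(\cB)$ is a $\K$-Lie algebra under the commutator and that $\omega_\cB$ is a well-defined anchor, so $\bP(\cB)=(\prim(\cB),\omega_\cB)$ is an object of $\ALie{A}$ with the anchor as described in the statement. What remains is to define $\bP$ on morphisms and to verify functoriality.

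For the action on morphisms, let $\phi:\cB\to\cB'$ be a morphism of $A$-bialgebroids. First I would observe that $\phi\circ s_\cB=s_{\cB'}$ and $\phi\circ t_\cB=t_{\cB'}$ imply $\phi\circ\eta_\cB=\eta_{\cB'}$, hence $\phi(1_\cB)=1_{\cB'}$, and that $\phi$ is a morphism of the underlying $\env{A}$-bimodules; in particular $\phi\tensor{A}\phi:\cB\tensor{A}\cB\to\cB'\tensor{A}\cB'$ is a well-defined map between the balanced tensor products of \eqref{eq:tensA}, and the compatibility square in the definition of a bialgebroid morphism reads $\Delta_{\cB'}\circ\phi=(\phi\tensor{A}\phi)\circ\Delta_\cB$. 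Then for $X\in\prim(\cB)$,
\[
\Delta_{\cB'}\bigl(\phi(X)\bigr)=(\phi\tensor{A}\phi)\bigl(X\tensor{A}1+1\tensor{A}X\bigr)=\phi(X)\tensor{A}1+1\tensor{A}\phi(X),
\]
so $\phi(X)\in\prim(\cB')$ and $\phi$ (co)restricts to a $\K$-linear map $\bP(\phi):\prim(\cB)\to\prim(\cB')$ determined by $\phi\circ\theta_\cB=\theta_{\cB'}\circ\bP(\phi)$. Since $\phi$ is an algebra map it preserves the commutator bracket, so $\bP(\phi)$ is a morphism of $\K$-Lie algebras; and the displayed computation immediately preceding the statement, which uses $\varepsilon_{\cB'}\circ\phi=\varepsilon_\cB$ together with $\phi\circ t_\cB=t_{\cB'}$, shows $\omega_{\cB'}\circ\bP(\phi)=\omega_\cB$, i.e. $\bP(\phi)$ is a morphism in $\ALie{A}$.

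Functoriality is then immediate: because each $\theta_\cB$ is injective, the relations $\bP(\id_\cB)=\id_{\prim(\cB)}$ and $\bP(\psi\circ\phi)=\bP(\psi)\circ\bP(\phi)$ are forced by the defining identities $\phi\circ\theta_\cB=\theta_{\cB'}\circ\bP(\phi)$, which one checks by composing. I do not expect a genuine obstacle in this proof; the only point deserving real care is the one flagged above, namely that $\phi\tensor{A}\phi$ be a legitimate map of the relevant balanced tensor products and that $\phi(1_\cB)=1_{\cB'}$, since these are precisely what make ``primitive'' a property preserved by bialgebroid morphisms over the fixed base $A$. Everything else is formal.
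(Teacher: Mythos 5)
Your proposal is correct and follows essentially the same route as the paper, whose ``proof'' is precisely the discussion preceding the statement (objects via Example~\ref{Ex:anchored}\ref{ex:itemPrim}, morphisms by (co)restriction, bracket compatibility from multiplicativity, and the displayed anchor computation using $\varepsilon_{\cB'}\circ\phi=\varepsilon_\cB$ and $\phi\circ t_\cB=t_{\cB'}$). The only difference is that you spell out explicitly why $\phi$ sends primitives to primitives (well-definedness of $\phi\tensor{A}\phi$ and $\phi(1_\cB)=1_{\cB'}$), a point the paper leaves implicit.
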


Next lemma states a property of the primitive elements of an $A$-bialgebroid that we already observed for $\cB_L$ in \eqref{eq:JL2} and that will be useful to prove the universal property in the forthcoming section.

\begin{lemma}
For $\cB $ an $A$-bialgebroid, every primitive element $X\in \prim\left( \cB \right) $ satisfies
\begin{equation}\label{eq:AmodPrim}
Xt\left( \op{a}\right) -t\left( \op{a}\right) X=t\Big(\op{\varepsilon \big( Xt\left( \op{a}\right) \big)}\Big) \qquad \text{and}\qquad Xs\left( a\right) -s\left( a\right) X = s\Big(\varepsilon \big( Xs\left( a\right) \big)\Big)  
\end{equation}
for all $a\in A$. In particular,
\begin{equation}\label{eq:PRIM}
\big[X,\eta_{\cB}(a \otimes \op{b})\big] = \eta_{\cB}\big(X \centerdot \left(a \otimes \op{b}\right)\big)
\end{equation}
for all $X \in \prim(\cB)$ and all $a,b \in A$.
\end{lemma}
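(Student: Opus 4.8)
The plan is to prove the first two identities in \eqref{eq:AmodPrim} and then observe that \eqref{eq:PRIM} is an immediate consequence. Let $X \in \prim(\cB)$, so $\Delta(X) = X \tensor{A} 1 + 1 \tensor{A} X$. The starting point is the counitality axiom in the form $\sum s\big(\varepsilon(\xi_{(1)})\big)\xi_{(2)} = \xi$ from \eqref{eq:coasscounel}, which I would apply to $\xi = X t(\op a)$. Using \eqref{eq:deltablack} to compute $\Delta(X t(\op a)) = \sum \xi_{(1)} \tensor{A} \xi_{(2)} t(\op a) = X \tensor{A} t(\op a) + 1 \tensor{A} X t(\op a)$, the counit identity yields
\[
X t(\op a) \;=\; s\big(\varepsilon(X)\big) t(\op a) + s\big(\varepsilon(1_\cB)\big) X t(\op a) \;=\; \varepsilon(X)\triangleright t(\op a) + X t(\op a),
\]
which is not quite what I want; instead the useful contraction comes from the \emph{other} counitality relation $\sum t\big(\op{\varepsilon(\xi_{(2)})}\big)\xi_{(1)} = \xi$. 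Applying this to $\xi = X t(\op a)$ and again using $\Delta(X t(\op a)) = X \tensor{A} t(\op a) + 1 \tensor{A} X t(\op a)$, together with \eqref{eq:epsist} which gives $\varepsilon(t(\op a)) = a$, one gets
\[
X t(\op a) \;=\; t\big(\op{\varepsilon(t(\op a))}\big) X + t\big(\op{\varepsilon(X t(\op a))}\big) \cdot 1_\cB \;=\; t(\op a) X + t\Big(\op{\varepsilon\big(X t(\op a)\big)}\Big),
\]
which is precisely the first identity. For the second identity, I would run the symmetric argument: apply $\sum s\big(\varepsilon(\xi_{(1)})\big)\xi_{(2)} = \xi$ to $\xi = X s(a)$, using $\Delta(X s(a)) = X s(a) \tensor{A} 1 + 1 \tensor{A} X s(a)$ from \eqref{eq:deltablack} and $\varepsilon(s(a)) = a$ from \eqref{eq:epsist}, to obtain $X s(a) = s\big(\varepsilon(X s(a))\big) + s(a) X$.

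Finally, for \eqref{eq:PRIM}: write $\eta_\cB(a \otimes \op b) = s(a) t(\op b)$, so that $[X, \eta_\cB(a \otimes \op b)] = [X, s(a)t(\op b)]$. Expanding the commutator and inserting $s(a) t(\op b) = t(\op b) s(a)$ (the source and target commute), one has
\[
X s(a) t(\op b) - s(a) t(\op b) X \;=\; \big(X s(a) - s(a) X\big) t(\op b) + s(a)\big(X t(\op b) - t(\op b) X\big).
\]
Substituting the two identities just proved, this equals $s\big(\varepsilon(X s(a))\big) t(\op b) + s(a) t\big(\op{\varepsilon(X t(\op b))}\big)$. Now $\varepsilon(X s(a)) = X \centerdot a$ and $\varepsilon(X t(\op b)) = X \centerdot b$ by \eqref{eq:dotaction}, so the right-hand side is $s(X \centerdot a) t(\op b) + s(a) t\big(\op{(X \centerdot b)}\big) = \eta_\cB\big((X \centerdot a) \otimes \op b\big) + \eta_\cB\big(a \otimes \op{(X \centerdot b)}\big) = \eta_\cB\big(X \centerdot (a \otimes \op b)\big)$, using that $X$ acts on the tensor product $A$-bimodule $A \otimes \op A$ as a derivation, which is exactly the displayed $L$-module structure recalled after \eqref{eq:JL2}.

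The routine bookkeeping here is entirely in keeping track of which of the two counitality relations in \eqref{eq:coasscounel} to apply and on which side $s$ or $t$ lands; the only mild subtlety — and the step I'd expect to need the most care — is making sure the $\env{A}$-module (equivalently bimodule) conventions, in particular the direction of $t$ acting via $\op A$ and the identification $\varepsilon(X s(a)) = \varepsilon(X t(\op a)) = X \centerdot a$ from \eqref{eq:epsiscambia} and \eqref{eq:dotaction}, are applied consistently so that both halves of \eqref{eq:AmodPrim} come out with the correct one of $s$ versus $t$ on the outside.
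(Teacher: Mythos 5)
Your overall strategy is sound and is essentially a mild variant of the paper's argument: the paper starts from the Takeuchi relation \eqref{eq:tak} applied to $\Delta(X)=X\tensor{A}1+1\tensor{A}X$, rewrites it using the relation $t(\op{a})m\tensor{A}n=m\tensor{A}s(a)n$, and then contracts with $\cB\tensor{A}\varepsilon$ and $\varepsilon\tensor{A}\cB$, whereas you compute $\Delta\big(Xt(\op{a})\big)$ and $\Delta\big(Xs(a)\big)$ via \eqref{eq:deltablack} and then invoke counitality \eqref{eq:coasscounel}; since \eqref{eq:deltablack} already encodes multiplicativity of $\Delta$ into the Takeuchi product, the two routes are equivalent in substance. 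Your derivation of the first identity is correct, and your explicit verification of \eqref{eq:PRIM} (which the paper leaves implicit) is also correct.

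There is, however, one concrete slip in the second half: \eqref{eq:deltablack} gives
\begin{equation*}
\Delta\big(Xs(a)\big)=\sum X_{(1)}s(a)\tensor{A}X_{(2)}=Xs(a)\tensor{A}1_{\cB}+s(a)\tensor{A}X,
\end{equation*}
not $Xs(a)\tensor{A}1_{\cB}+1_{\cB}\tensor{A}Xs(a)$ as you wrote (the factor $s(a)$ lands in the \emph{first} leg, in contrast with the $t(\op{a})$ case). Taken literally, your displayed formula fed into $\sum s\big(\varepsilon(\xi_{(1)})\big)\xi_{(2)}=\xi$ yields $s\big(\varepsilon(Xs(a))\big)+Xs(a)=Xs(a)$, i.e.\ only $s\big(\varepsilon(Xs(a))\big)=0$, which is false in general and in any case not the claimed identity. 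With the correct expression the contraction gives $Xs(a)=s\big(\varepsilon(Xs(a))\big)+s\big(\varepsilon(s(a))\big)X=s\big(\varepsilon(Xs(a))\big)+s(a)X$ using \eqref{eq:epsist}, which is exactly the second relation in \eqref{eq:AmodPrim}; your own citation of $\varepsilon(s(a))=a$ shows this is what you intended, so the fix is immediate, but the formula as written should be corrected.
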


\begin{proof}
In view of the definition of $\cB \tensor{A}\cB $ and of $\cB \tak{A}\cB $ we have that
\begin{align*}
Xt\left( \op{a}\right) \tensor{A}1_{\cB }+t\left( \op{a}\right) \tensor{A}X \stackrel{\eqref{eq:tak}}{=} X\tensor{A}s\left( a\right) +1_{\cB }\tensor{A}Xs\left( a\right) .
\end{align*}
By resorting to the left-hand side identity in \eqref{eq:tak}, this relation can be written equivalently as%
\begin{equation}
Xt\left( \op{a}\right) \tensor{A}1_{\cB }+t\left( \op{a}\right) \tensor{A}X = t\left(\op{a}\right) X\tensor{A}1_{\cB }+1_{\cB }\tensor{A}Xs\left( a\right)
\label{eq:Aprim1}
\end{equation}
or
\begin{equation}
Xt\left( \op{a}\right) \tensor{A}1_{\cB }+1_{\cB }\tensor{A}s\left( a\right) X = X\tensor{A}s\left( a\right) +1_{\cB }\tensor{A}Xs\left( a\right) .
\label{eq:Aprim2}
\end{equation}
By applying $\cB \tensor{A}\varepsilon $ to both sides of \eqref{eq:Aprim1} and by recalling that $\varepsilon(X) = 0$, we get
\begin{equation*}
Xt\left( \op{a}\right) = t\left( \op{a}\right) X+t\Big(\op{\varepsilon \big( Xs\left( a\right) \big)}\Big) \stackrel{\eqref{eq:epsiscambia}}{=} t\left( \op{a}\right) X+t\Big(\op{\varepsilon \big( Xt\left( \op{a}\right) \big)}\Big) .
\end{equation*}%
If we apply instead $\varepsilon \tensor{A}\cB $ to \eqref{eq:Aprim2} then we get
\begin{equation*}
s\Big( \varepsilon \big( Xt\left( \op{a}\right) \big) \Big) +s\left( a\right) X=Xs\left( a\right) ,
\end{equation*}%
which gives the other relation in \eqref{eq:AmodPrim}.
\end{proof}

% --------------------------------------------------------------------------------------------------------------------------------- %
% ADJUNCTION
% --------------------------------------------------------------------------------------------------------------------------------- %

\subsection{An adjunction between \texorpdfstring{$\ALie{A}$}{A-AnchLie} and \texorpdfstring{$\Bialgd_A$}{A-Bialgd}}\label{ssec:adj}

We show now how the Connes-Moscovici's bialgebroid construction provides a left adjoint to the functor $\bP$ in a way that mimics the well-known ``universal enveloping algebra/space of primitives'' adjunction
\begin{equation}\label{eq:ClassAdj}
\begin{gathered}
\xymatrix{
\bialgk \ar @<+0.5ex> @/^/ [d]^-{\prim(-)} \ar@{}[d]|-{\dashv} \\
\liek. \ar @<+0.5ex> @/^/ [u]^-{U_\K(-)}
}
\end{gathered}
\end{equation}

\begin{remark}
Despite being well-known, it seems that no ``classical'' reference explicitly reports the adjunction \eqref{eq:ClassAdj} in the form we stated it here. Nevertheless, it is straightforward to check that the involved functors are well-defined (they are, in fact, slight adjustments of the functors considered in \cite[page 239]{MilnorMoore}) and that they form an adjoint pair. The unit $L \to \prim(U_\K(L))$ (induced by the canonical map $j_L$ of \eqref{eq:jL}) and the counit $U_\K(\prim(B)) \to B$ (the unique algebra morphism extending the Lie algebra inclusion $\prim(B) \subseteq \cL(B)$) are the obvious natural morphism which are proved to be bijective in \cite[Theorem 5.18]{MilnorMoore}.
\end{remark}

\begin{proposition}\label{prop:counit}
Let $(L,\omega)$ be an $A$-anchored Lie algebra. Given an $A$-bialgebroid $\cB$ and given a Lie algebra morphism $\phi_L:L \to \cL(\cB)$ such that $\phi_L(L)\subseteq \prim(\cB)$ and
\begin{equation}\label{eq:phiomega}
\phi_L(X)\centerdot a = \omega(X)(a)
\end{equation}
for all $a \in A$ and for all $X \in L$, there exists a unique morphism of $A$-bialgebroids $\Phi:\cB_L \to \cB$ such that $\Phi\circ J_L = \phi_L$ and it is explicitly given by \eqref{eq:defPhi}.
\end{proposition}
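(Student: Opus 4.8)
The plan is to produce $\Phi$ first as a morphism of $\env{A}$-rings through Theorem~\ref{thm:UEA}, then to check that it is automatically compatible with counits and comultiplications, and finally to deduce uniqueness from that of Theorem~\ref{thm:UEA}. To apply Theorem~\ref{thm:UEA} with $\cR=\cB$ and $\phi_A=\eta_\cB$, the only hypothesis to verify is \eqref{eq:Llin}. Since $\phi_L(L)\subseteq\prim(\cB)$, relation \eqref{eq:PRIM} gives $\big[\phi_L(X),\eta_\cB(a\otimes\op b)\big]=\eta_\cB\big(\phi_L(X)\centerdot(a\otimes\op b)\big)$ for all $X\in L$ and $a,b\in A$, and the hypothesis \eqref{eq:phiomega} turns $\phi_L(X)\centerdot(a\otimes\op b)=\big(\phi_L(X)\centerdot a\big)\otimes\op b+a\otimes\op{\big(\phi_L(X)\centerdot b\big)}$ into $\big(\omega(X)(a)\big)\otimes\op b+a\otimes\op{\big(\omega(X)(b)\big)}=X\cdot(a\otimes\op b)$, which is exactly \eqref{eq:Llin}. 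Thus Theorem~\ref{thm:UEA} produces a unique morphism of $\env{A}$-rings $\Phi\colon\cB_L\to\cB$ with $\Phi\circ J_L=\phi_L$, explicitly given by \eqref{eq:defPhi}, where $\phi'\colon U\coloneqq U_\K(L)\to\cB$ is the algebra map extending $\phi_L$. In particular $\Phi\circ s_{\cB_L}=s_\cB$ and $\Phi\circ t_{\cB_L}=t_\cB$. Uniqueness is then immediate: any morphism of $A$-bialgebroids $\cB_L\to\cB$ extending $\phi_L$ is in particular a morphism of $\env{A}$-rings, hence coincides with $\Phi$ by the uniqueness clause of Theorem~\ref{thm:UEA}.

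It remains to check that $\Phi$ respects counits and comultiplications. For the counit, using that $\Phi$ is a morphism of $\env{A}$-rings and that $\varepsilon_\cB$ is left $\env{A}$-linear \eqref{eq:epsibil}, for $a,b\in A$ and $u\in U$ we get $\varepsilon_\cB\big(\Phi(a\otimes u\otimes b)\big)=a\,\varepsilon_\cB\big(\phi'(u)\big)\,b$, whereas $\varepsilon_{\cB_L}(a\otimes u\otimes b)=a\,\varepsilon_U(u)\,b$, with $\varepsilon_U$ the augmentation of $U$; hence it suffices to show $\varepsilon_\cB\big(\phi'(u)\big)=\varepsilon_U(u)\,1_A$ for all $u\in U$. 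By \eqref{eq:modulealgebra} one has $\varepsilon_\cB(\xi)=\xi\centerdot 1_A$, and for $X\in L$ we get $\varepsilon_\cB\big(\phi_L(X)\big)=\phi_L(X)\centerdot 1_A=\omega(X)(1_A)=0$, either because $\omega(X)$ is a derivation or because $\phi_L(X)$ is primitive. Evaluating $\phi'(u)\centerdot 1_A$ on a PBW monomial $u=x_{\alpha_1}\cdots x_{\alpha_n}$ and stripping off factors from the left via $(\xi\zeta)\centerdot a=\xi\centerdot(\zeta\centerdot a)$ then yields $0$ for $n\geq 1$ and $1_A$ for $n=0$, which matches $\varepsilon_U$ on the PBW basis; the claim follows by linearity.

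For the comultiplication, unwinding the definitions of $\Delta_{\cB_L}$ and of $\Phi$ and using \eqref{eq:DeltaLin}, the identity $(\Phi\tensor{A}\Phi)\circ\Delta_{\cB_L}=\Delta_\cB\circ\Phi$ reduces to the claim that
\[
\Delta_\cB\big(\phi'(u)\big)=\sum\phi'(u_{(1)})\tensor{A}\phi'(u_{(2)})\qquad\text{for all }u\in U,
\]
where $\sum u_{(1)}\otimes u_{(2)}$ denotes the standard coproduct of $U$. I would prove this by induction on the length of PBW monomials. It holds for $u=1$; for $u=j_L(X)$ with $X\in L$ both sides equal $\phi_L(X)\tensor{A}1+1\tensor{A}\phi_L(X)$ because $\phi_L(X)$ is primitive; and if $u=xm$ with $x=j_L(X_{\alpha_1})$ and $m$ a shorter PBW monomial, then $\Delta_\cB\big(\phi'(u)\big)=\Delta_\cB\big(\phi'(x)\big)\,\Delta_\cB\big(\phi'(m)\big)$, which by the inductive hypothesis, by multiplicativity of $\phi'$, and by the componentwise product \eqref{eq:prod} equals $\sum\phi'(x_{(1)}m_{(1)})\tensor{A}\phi'(x_{(2)}m_{(2)})=\sum\phi'(u_{(1)})\tensor{A}\phi'(u_{(2)})$, the last equality because $\Delta_U$ is an algebra map; extension by linearity finishes the argument. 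The main obstacle is precisely this inductive step: the product \eqref{eq:prod} is only defined on the Takeuchi--Sweedler product $\cB\tak{A}\cB$, not on the bare tensor product $\cB\tensor{A}\cB$, so one must keep track of the fact that all the tensors being multiplied are genuinely values of $\Delta_\cB$ and therefore lie in $\cB\tak{A}\cB$ before applying \eqref{eq:prod} componentwise.
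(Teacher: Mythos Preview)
Your proposal is correct and follows essentially the same route as the paper's own proof. Both arguments first invoke Theorem~\ref{thm:UEA} (after checking \eqref{eq:Llin} via \eqref{eq:PRIM} and \eqref{eq:phiomega}) to obtain $\Phi$ as an $\env{A}$-ring map, then reduce the bialgebroid compatibility to the two conditions $\varepsilon_\cB\circ\phi'=\varepsilon_U$ and $\Delta_\cB\circ\phi'=(\phi'\tensor{A}\phi')\circ\Delta_U$, which are verified by the same induction on PBW monomials; you correctly flag (and resolve, via the inductive hypothesis identifying $\sum\phi'(m_{(1)})\tensor{A}\phi'(m_{(2)})$ with $\Delta_\cB(\phi'(m))\in\cB\tak{A}\cB$) the one subtlety about the Takeuchi product. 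The only cosmetic difference is that the paper packages the reduction step by citing \cite[Theorem 3.2]{PanaiteOystaeyen} through Remark~\ref{rem:PanaiteOystaeyen}, whereas you unwind it directly using \eqref{eq:DeltaLin}; your version is more self-contained.
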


\begin{proof}
Set $U \coloneqq U_\K(L)$ and let $\phi':U \to \cB$ be the unique $\K$-algebra map extending $\phi_L$. In view of \eqref{eq:PRIM} and \eqref{eq:phiomega}, $\eta_\cB$ and $\phi_L$ satisfy \eqref{eq:Llin}. Thus, by Theorem \ref{thm:UEA}, there exists a unique morphism of $\env{A}$-rings $\Phi : \cB_L \to \cB$ satisfying $\Phi\circ J_L = \phi_L$. Moreover, $\eta_\cB : \env{A} \to \cB$ is always a morphism of $A$-bialgebroids, where $\env{A}$ has the $A$-bialgebroid structure from Example \ref{ex:bialgebroids}\ref{item:enveloping}. Therefore, in view of Remark \ref{rem:PanaiteOystaeyen} and of \cite[Theorem 3.2]{PanaiteOystaeyen}, if we show that $\left(1_A,\phi'\right):\left(\K,U\right) \to \left(A,\cB\right) $ is a morphism of bialgebroids, then we can conclude that $\Phi$ is a morphism of $A$-bialgebroids and finish the proof. Equivalently, we need to check that $\phi': U \to \cB$ satisfies
\begin{gather}
\sum \phi'\left( u_{\left( 1\right) }\right) \tensor{A}\phi'\left( u_{\left( 2\right) }\right) = \sum \phi'\left( u\right)_{\left( 1\right) }\tensor{A}\phi'\left( u\right) _{\left(2\right) }  \label{eq:phiprime} \\
\text{and} \qquad \varepsilon _{\cB }\left( \phi'\left( u\right) \right) = \varepsilon _{U}\left( u\right) 1_{A}  \label{eq:phiprime2}
\end{gather}
for all $u\in U$. 
Since, in view of the PBW theorem, $U$ admits a $\K$-basis of the form
\[\{1_U\}\cup\left\{x_{1}\cdots x_{n}\mid n\geq 1,X_{1},\ldots ,X_{n}\in L\right\},\]
it is enough to check \eqref{eq:phiprime} and \eqref{eq:phiprime2} on the elements of this basis. A direct computation shows that $\varepsilon _{\cB }\left( \phi'\left( 1_U\right) \right) = \varepsilon_\cB(1_\cB) = 1_A = \varepsilon_U(1_U)1_A$ and that
\begin{align*}
\varepsilon _{\cB }\left( \phi'\left( x_{1}\cdots x_{n}\right) \right) & = \varepsilon _{\cB }\left( \phi'\left(x_{1}\cdots x_{n-1}\right) \phi'\left( x_{n}\right) \right) \stackrel{\eqref{item:B5}}{=} \varepsilon _{\cB }\left( \phi'\left(x_{1}\cdots x_{n-1} \right) s\left(\varepsilon _{\cB }\left( \phi'\left( x_{n} \right)\right)\right) \right) \\
 & = \varepsilon _{\cB }\left( \phi'\left(x_{1}\cdots x_{n-1} \right) s\left(\varepsilon _{\cB }\left( \phi_L\left( X_{n}\right)\right)\right) \right) = 0 = \varepsilon _{U}\left( x_{1}\cdots x_{n}\right) 1_{A}
\end{align*}
for all $n \geq 1$. Therefore, relation \eqref{eq:phiprime2} holds. Concerning \eqref{eq:phiprime}, we notice first of all that
\begin{equation*}
\phi'\left( 1_{U}\right) \tensor{A}\phi'\left(1_{U}\right) =1_{\cB }\tensor{A}1_{\cB }= \sum \phi'\left( 1_{U}\right) _{\left( 1\right) }\tensor{A}\phi'\left( 1_{U}\right) _{\left(2\right) },
\end{equation*}
which shows that it is satisfied for $u=1_U$, and we prove by induction on $n\geq 1$ that it also holds for $u=x_{1}\cdots x_{n}$, where $X_{1},\ldots ,X_{n}\in L$. For $n=1$ we have
\begin{align*}
\sum \phi'\left( x_{\left( 1\right) }\right) \tensor{A}\phi'\left( x_{\left( 2\right) }\right) & = \phi'\left( x\right) \tensor{A}\phi'\left( 1_{U}\right) + \phi'\left( 1_{U}\right) \tensor{A}\phi'\left( x\right) \\
& = \phi_L(X)\tensor{A}1_{\cB }+1_{\cB }\tensor{A}\phi_L(X) = \sum \phi'\left( x\right) _{\left( 1\right) }\tensor{A}\phi'\left( x\right) _{\left(2\right) }.
\end{align*}
Assume now that \eqref{eq:phiprime} holds for $n \geq 1$, that is, that we have
\begin{equation}\label{eq:indhy}
\sum \phi'\left( \left( x_{1}\cdots x_{n} \right) _{\left( 1\right) }\right) \tensor{A}\phi'\left( \left( x_{1}\cdots x_{n} \right) _{\left( 2\right) }\right) = \sum \phi'\left( x_{1}\cdots x_{n} \right) _{\left( 1\right) }\tensor{A}\phi'\left( x_{1}\cdots x_{n}\right) _{\left( 2\right) }
\end{equation}
for all $X_1,\ldots,X_n \in L$ and hence, in particular, $\sum \phi'\left( \left( x_{1}\cdots x_{n}\right) _{\left( 1\right) }\right) \tensor{A}\phi'\left( \left( x_{1}\cdots x_{n} \right) _{\left( 2\right) }\right) \in \cB  \tak{A} \cB $. Let us compute
\begin{align*}
& \sum \phi'\left( \left( x_{1}x_{2}\cdots x_{n+1} \right) _{\left(1\right) }\right) \tensor{A}\phi'\left( \left( x_{1}x_{2}\cdots x_{n+1} \right) _{\left( 2\right) }\right) \\
& \stackrel{\phantom{(35)}}{=} \sum \phi'\left( \left( x_{1} \right) _{\left( 1\right) }\left(x_{2}\cdots x_{n+1} \right) _{\left( 1\right) }\right) \tensor{A}\phi'\left( \left( x_{1} \right) _{\left( 2\right) }\left( x_{2}\cdots x_{n+1} \right) _{\left( 2\right) }\right) \\
& \stackrel{\phantom{(35)}}{=} \sum \phi'\left( \left( x_{1} \right) _{\left( 1\right) }\right)\phi'\left( \left( x_{2}\cdots x_{n+1} \right) _{\left( 1\right)}\right) \tensor{A}\phi'\left( \left( x_{1} \right) _{\left(2\right) }\right) \phi'\left( \left( x_{2}\cdots x_{n+1} \right)_{\left( 2\right) }\right) \\
& \stackrel{\eqref{eq:prod}}{=} \left(\sum \phi'\left( \left( x_{1} \right) _{\left( 1\right) }\right) \tensor{A} \phi'\left( \left( x_{1} \right) _{\left(2\right) }\right) \right) \left(\sum \phi'\left( \left( x_{2}\cdots x_{n+1} \right) _{\left( 1\right)}\right) \tensor{A} \phi'\left( \left( x_{2}\cdots x_{n+1} \right)_{\left( 2\right) }\right)\right) \\
 & \stackrel{\eqref{eq:indhy}}{=}\left( \phi_L\left(X_{1}\right)\tensor{A}1_{\cB }+1_{\cB }\tensor{A}\phi_L\left(X_{1}\right)\right) \left( \sum \phi'\left( x_{2}\cdots x_{n+1} \right) _{\left( 1\right) }\tensor{A}\phi'\left( x_{2}\cdots x_{n+1} \right) _{\left( 2\right) }\right) \\
 & \stackrel{\eqref{eq:prod}}{=} \sum \phi'\left( x_{1} \right) _{\left( 1\right) }\phi'\left(x_{2}\cdots x_{n+1} \right) _{\left( 1\right) }\tensor{A}\phi'\left( x_{1} \right) _{\left( 2\right) }\phi'\left( x_{2}\cdots x_{n+1} \right) _{\left( 2\right) } \\
 & \stackrel{\phantom{(35)}}{=} \sum \left( \phi'\left( x_{1} \right) \phi'\left( x_{2}\cdots x_{n+1} \right) \right) _{\left( 1\right) }\tensor{A}\left( \phi'\left( x_{1} \right) \phi'\left( x_{2}\cdots x_{n+1} \right) \right) _{\left( 2\right) } \\
 & \stackrel{\phantom{(35)}}{=} \sum \phi'\left( x_{1}x_{2}\cdots x_{n+1} \right) _{\left( 1\right) }\tensor{A}\phi'\left( x_{1}x_{2}\cdots x_{n+1} \right) _{\left(2\right) },
\end{align*}
so that it holds for $n+1$ and we may conclude that it holds for every $n$ by induction. 
\end{proof}

\begin{corollary}\label{cor:unit}
Let $(L,\omega)$ be an $A$-anchored Lie algebra. Given an $A$-bialgebroid $\cB$ and given a morphism of $A$-anchored Lie algebras $\phi_L:(L,\omega) \to \left(\prim(\cB),\omega_\cB\right)$, there exists a unique morphism of $A$-bialgebroids $\Phi:\cB_L \to \cB$ such that $\Phi\circ J_L = \theta_\cB \circ \phi_L$, where $\theta_\cB:\prim(\cB) \to \cB$ is the canonical inclusion.
\end{corollary}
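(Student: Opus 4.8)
The plan is to obtain this immediately from Proposition \ref{prop:counit} by translating between the two equivalent ways of packaging the hypothesis on $\phi_L$. First I would record that, by Example \ref{Ex:anchored}\ref{ex:itemPrim}, $\left(\prim(\cB),\omega_\cB\right)$ is an $A$-anchored Lie algebra and that the inclusion $\theta_\cB:\prim(\cB)\to\cB$ is in particular a morphism of Lie algebras $\prim(\cB)\to\cL(\cB)$. Hence the composite $\psi_L \coloneqq \theta_\cB\circ\phi_L : L \to \cL(\cB)$ is a Lie algebra morphism whose image is contained in $\prim(\cB)$.

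Next I would verify that $\psi_L$ satisfies the anchor compatibility \eqref{eq:phiomega} required by Proposition \ref{prop:counit}. For $X\in L$ and $a\in A$, using that $\phi_L(X)\in\prim(\cB)$ together with the description \eqref{eq:omegaaction} of $\omega_\cB$, one has $\psi_L(X)\centerdot a = \phi_L(X)\centerdot a = \omega_\cB\big(\phi_L(X)\big)(a)$; and since $\phi_L$ is a morphism of $A$-anchored Lie algebras, $\omega_\cB\circ\phi_L=\omega$, so this equals $\omega(X)(a)$. That is precisely condition \eqref{eq:phiomega} for the pair $(\cB,\psi_L)$.

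With both conditions in place, Proposition \ref{prop:counit} applies to $(\cB,\psi_L)$ and produces a unique morphism of $A$-bialgebroids $\Phi:\cB_L\to\cB$ with $\Phi\circ J_L=\psi_L=\theta_\cB\circ\phi_L$ (given explicitly by \eqref{eq:defPhi}, inherited from that proposition), which is exactly the assertion. There is no serious obstacle here: all the content sits in Proposition \ref{prop:counit}, and the only point to keep track of is the bookkeeping identifying ``a Lie morphism into $\cL(\cB)$ with image in $\prim(\cB)$ that is compatible with the anchors'' with ``a morphism in $\ALie{A}$ into $\left(\prim(\cB),\omega_\cB\right)$'', which is immediate from the definitions of $\omega_\cB$ and of morphisms of $A$-anchored Lie algebras.
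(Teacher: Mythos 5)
Your proposal is correct and follows essentially the same route as the paper, which likewise deduces the corollary from Proposition \ref{prop:counit} by observing that $\theta_\cB\circ\phi_L$ satisfies \eqref{eq:phiomega} precisely when $\phi_L$ is a morphism of $A$-anchored Lie algebras; your write-up merely spells out this equivalence via \eqref{eq:omegaaction} in more detail.
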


\begin{proof}
The morphism of Lie algebras $\left(\theta_\cB \circ \phi_L\right):L \to \cL(\cB)$ satisfies \eqref{eq:phiomega} if and only if $\phi_L$ is of $A$-anchored Lie algebras.
\end{proof} 

\begin{theorem}\label{th:main}
The assignment 
\[
\bB : \ALie{A}\to \Bialgd_{A}, \qquad \left( L,\omega \right) \mapsto A\odot U_{\K}\left( L\right) \odot A
\] 
induces a well-defined functor which is left adjoint to the functor
\[
\bP:\Bialgd_{A}\to \ALie{A}, \qquad \cB \mapsto \left(\prim \left( \cB \right), \omega_\cB \right),
\] 
where $\omega _{\cB }:\prim \left( \cB \right) \to \Derk{A} $ is the anchor of equation \eqref{eq:omegaaction}. Write $\vartheta _{\cB }:U_{\K}\left( \prim \left( \cB \right) \right) \to \cB $ for the unique algebra map extending the inclusion $\theta_\cB:\prim \left( \cB \right) \subseteq \cB $. Then the unit and the counit of this adjunction are given by
\begin{gather}
\gamma _{L} :\left( L,\omega \right) \to \left( \prim \left( A\odot U_{\K}\left( L\right) \odot A\right) , \omega _{A\odot U_{\K}\left( L\right) \odot A}\right) , \qquad X\mapsto J_L(X) = 1_{A}\otimes x\otimes 1_{A}, \label{eq:unit} \\
\text{and} \quad \epsilon _{\cB } : A\odot U_{\K}\left( \prim \left( \cB \right) \right)\odot A\to \cB , \qquad a\otimes u\otimes b\mapsto s_{\cB }\left( a\right)t_{\cB }\left( \op{b}\right) \vartheta _{\cB }\left( u\right), \notag
\end{gather}
respectively, where $x = j_L(X)$, as usual. Furthermore, every component of the unit is a monomorphism and hence $\bB $ is faithful.
\end{theorem}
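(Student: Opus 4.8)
The plan is to exhibit $\bB$ as a functor, to verify that $\gamma$ and $\epsilon$ as written are natural transformations, to deduce the adjunction from this, and finally to obtain faithfulness from the fact that $\gamma$ is componentwise a monomorphism. Most of the substantive work — the universal property producing the comparison morphisms — is already available in Theorem \ref{thm:UEA}, Proposition \ref{prop:counit} and Corollary \ref{cor:unit}, so the remaining task is largely organisational.

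First I would check that $\bB$ is functorial. On objects, $\bB(L,\omega) = A\odot U_\K(L)\odot A$ is well-defined by Lemma \ref{lem:modulealgebra}. Given a morphism $f:(L,\omega)\to(L',\omega')$ in $\ALie{A}$, I set $\bB(f)(a\otimes u\otimes b)\coloneqq a\otimes U_\K(f)(u)\otimes b$. The key observation is that $\Omega'\circ U_\K(f)=\Omega$, since both sides are $\K$-algebra maps $U_\K(L)\to\End{\K}{A}$ agreeing on the generators $j_L(L)$ (here one uses $\omega'\circ f=\omega$); equivalently, $U_\K(f)$ intertwines the two module-algebra structures of \eqref{eq:Uaction}. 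Granting this, and using that $U_\K(f)$ is a morphism of coalgebras, a direct check against the product \eqref{eq:CMprod} and against the formulas defining $s$, $t$, $\varepsilon$, $\Delta$ on $\cB_L$ shows that $\bB(f)$ is a morphism of $A$-bialgebroids; the relations $\bB(\id)=\id$ and $\bB(g\circ f)=\bB(g)\circ\bB(f)$ are then inherited from the functoriality of $U_\K(-)$.

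Next I would verify that $\gamma_L=J_L$ takes values in $\prim(\cB_L)$ and is a morphism of $A$-anchored Lie algebras: the element $1_A\otimes x\otimes 1_A$ is primitive by the explicit formula for $\Delta_{\cB_L}$, and a short computation with \eqref{eq:CMprod} gives $J_L(X)\centerdot a=x\cdot a=\omega(X)(a)$, so $\omega_{\cB_L}\circ\gamma_L=\omega$; naturality of $\gamma$ is immediate from $\bB(f)(1_A\otimes x\otimes 1_A)=1_A\otimes U_\K(f)(x)\otimes 1_A$. Corollary \ref{cor:unit} now states exactly that, for every $A$-bialgebroid $\cB$, the assignment $\Phi\mapsto\bP(\Phi)\circ\gamma_L$ is a bijection from the $A$-bialgebroid morphisms $\cB_L\to\cB$ onto the morphisms $(L,\omega)\to\bP(\cB)$ in $\ALie{A}$; naturality of this bijection in $L$ (via naturality of $\gamma$) and in $\cB$ (via functoriality of $\bP$) is routine, whence $\bB\dashv\bP$ with unit $\gamma$. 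By the usual recipe the counit $\epsilon_\cB$ is the morphism corresponding to $\id_{\prim(\cB)}$, i.e.\ the unique $A$-bialgebroid map $\bB(\prim(\cB))\to\cB$ extending $\theta_\cB$; applying Proposition \ref{prop:counit} with $\phi_L=\theta_\cB$ (which satisfies \eqref{eq:phiomega} by the very definition of $\omega_\cB$) identifies it with the morphism \eqref{eq:defPhi} for $\phi_A=\eta_\cB$ and $\phi'=\vartheta_\cB$, namely $a\otimes u\otimes b\mapsto s_\cB(a)t_\cB(\op b)\vartheta_\cB(u)$, as claimed. Alternatively one may bypass Corollary \ref{cor:unit} and check the triangle identities by hand: $\bP(\epsilon_\cB)\circ\gamma_{\prim(\cB)}=\id$ because $\epsilon_\cB(1_A\otimes x\otimes 1_A)=\vartheta_\cB(x)=\theta_\cB(X)$ for $X\in\prim(\cB)$, while $\epsilon_{\bB(L)}\circ\bB(\gamma_L)=\id$ because $\vartheta_{\bB(L)}\circ U_\K(\gamma_L)$ is the algebra map $u\mapsto 1_A\otimes u\otimes 1_A$, so $\epsilon_{\bB(L)}(a\otimes U_\K(\gamma_L)(u)\otimes b)=s_{\cB_L}(a)t_{\cB_L}(\op b)(1_A\otimes u\otimes 1_A)=a\otimes u\otimes b$ by \eqref{eq:CMprod}.

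Finally, each $\gamma_L=(1_A\otimes-\otimes 1_A)\circ j_L$ is injective: $j_L$ is injective by \eqref{eq:jL}, and the elements $1_A\otimes x_\alpha\otimes 1_A$ belong to an $\env{A}$-basis of $\cB_L$ by Proposition \ref{prop:PBW}. Hence $\gamma$ is componentwise a monomorphism, and the standard argument finishes the proof: if $\bB(f)=\bB(g)$ then $\gamma_{L'}\circ f=\bP\bB(f)\circ\gamma_L=\bP\bB(g)\circ\gamma_L=\gamma_{L'}\circ g$ by naturality, so $f=g$. I expect the only genuinely computational step to be the verification that $\bB(f)$ is a bialgebroid morphism, and even there the sole substantive input is the identity $\Omega'\circ U_\K(f)=\Omega$ forced by the anchor compatibility; everything else is bookkeeping with \eqref{eq:CMprod}.
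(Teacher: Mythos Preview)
Your proposal is correct and follows essentially the same route as the paper: both use Corollary \ref{cor:unit} to produce the natural bijection \eqref{eq:adjunction} and identify the counit via Proposition \ref{prop:counit}. The only organisational difference is that the paper defines $\bB(f)$ by invoking Corollary \ref{cor:unit} (applied to $\gamma_{L'}\circ f$) and then observes in a remark that the resulting map is $a\otimes u\otimes b\mapsto a\otimes U_\K(f)(u)\otimes b$, whereas you write down this explicit formula first and verify directly that it is an $A$-bialgebroid morphism; both are equally valid and your route has the minor advantage of making the functoriality $\bB(g\circ f)=\bB(g)\circ\bB(f)$ transparent.
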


\begin{proof}
We need to see how $\bB$ operates on morphisms. Let $f:(L,\omega) \to (L',\omega')$ be a morphism of $A$-anchored Lie algebras. In view of the fact that
\begin{align*}
\omega _{\cB _{L}}\left( \gamma _{L}\left( X\right) \right) \left( a\right) & \stackrel{\eqref{eq:unit}}{=} \omega _{\cB _{L}}\left( 1_{A}\otimes x\otimes 1_{A}\right) \left( a\right) \stackrel{\eqref{eq:omegaaction}}{=} \left( 1_{A}\otimes x\otimes 1_{A}\right) \centerdot a \\
 & \stackrel{\eqref{eq:dotaction}}{=} \varepsilon _{\cB _{L}}\left( \left( 1_{A}\otimes x\otimes 1_{A}\right)\left( a\otimes 1_{U}\otimes 1_A\right) \right)  \\
 & \stackrel{\eqref{eq:CMprod}}{=} \varepsilon _{\cB _{L}}\left( \left( x\cdot a\right) \otimes 1_{U}\otimes 1_A+a\otimes x\otimes 1_A+a\otimes 1_{U}\otimes \left(x\cdot 1_A\right) \right)  \\
 & \stackrel{\eqref{eq:Uaction}}{=} x\cdot a + a\varepsilon _{U}\left( x\right) \stackrel{\eqref{eq:Uaction}}{=} \omega \left(X\right) \left( a\right)
\end{align*}
for all $X \in L$ and $a \in A$, the morphism $\gamma_L:L \to \prim(\cB_L)$ induced by $J_L$ (namely, we have $J_L = \theta_{\cB_L} \circ \gamma_L$) is a morphism of $A$-anchored Lie algebras for every $L$ in $\ALie{A}$ and hence, by Corollary \ref{cor:unit}, there exists a unique morphism of $A$-bialgebroids $\Phi:\cB_L \to \cB_{L'}$ such that $\Phi \circ J_L = J_{L'} \circ f = \theta_{\cB_{L'}} \circ \gamma_{L'} \circ f$ and it is explicitly given by \eqref{eq:defPhi}. We set $\bB(f) = \Phi$.

In order to conclude, consider the natural assignment
\begin{equation}\label{eq:adjunction}
\Bialgd_{A}\left(A\odot  U_\K(L) \odot  A,\cB\right) \to \ALie{A}\left(L,\prim(\cB)\right), \qquad \Psi \mapsto \prim(\Psi) \circ \gamma_L.
\end{equation}
Corollary \ref{cor:unit} states that for every $\phi_L$ in $\ALie{A}\left(L,\prim(\cB)\right)$ there exists a unique $\Phi$ in $\Bialgd_{A}\left(A\odot  U_\K(L) \odot  A,\cB\right)$ such that $\Phi \circ J_L = \theta_\cB \circ \phi_L$, which implies that \eqref{eq:adjunction} is bijective.
\end{proof}

\begin{remark}
In the context of the proof above, let $F : U \to U'$ be the unique $\K$-algebra morphism satisfying $F \circ j_L = j_{L'} \circ f$. Since
\[\psi:U \to \cB_{L'}, \qquad u \mapsto 1_A \otimes F(u) \otimes 1_A,\]
is a $\K$-algebra morphism such that $\psi\circ j_L = J_{L'}\circ f$, the unique morphism of $A$-bialgebroids $\bB(f):\cB_L \to \cB_{L'}$ induced by $f : L \to L'$ has the form
\[
\bB(f) \left( a\otimes u\otimes b\right) = a \otimes F(u) \otimes b
\]
for all $a,b \in A$ and $u \in U$. 
Thus, if $f:(L,\omega) \to (L',\omega')$ is a morphism of $A$-anchored Lie algebras, we will also write $A\odot U(f)\odot A$ to denote the $A$-bialgebroid morphism $\bB(f)$, where $U(f):U_\K(L) \to U_\K(L')$ is the unique $\K$-algebra morphism such that $U(f) \circ j_L = j_{L'} \circ f$. 
\end{remark}

\begin{example}\label{ex:Ders}
Let $A$ be a finite-dimensional algebra and let $\cB = \End{\K}{A}$ with the $A$-bialgebroid structure of Example \ref{ex:bialgebroids}\ref{item:exb}. It is clear that $\prim(\cB) = \Derk{A}$. Then, the image of the associated Connes-Moscovici's bialgebroid $A \odot U_\K(\Derk{A}) \odot A$ in $\End{\K}{A}$ via $\epsilon_\cB$ is the $\env{A}$-subring of $\End{\K}{A}$ generated by $\env{A}$ and $\Derk{A}$. In this sense, it can be interpreted as the \emph{derivation $\env{A}$-ring} of $A$ in the sense of \cite[Chapter 15, \S1.4]{McConnelRobson}.
\end{example}

In particular, the counit of the adjunction is not surjective in general. We will see with Corollary \ref{cor:primitive} why also the unit is not surjective.

% --------------------------------------------------------------------------------------------------------------------------------- %
% MILNOR-MOORE
% --------------------------------------------------------------------------------------------------------------------------------- %

\section{An intrinsic description of \texorpdfstring{$A \odot U_\K(L) \odot A$}{A x U(L) x A}}\label{sec:intrinsic}

Inspired by the results of Milnor-Moore and Moerdijk-Mr\v{c}un, which give an intrinsic description of those bialgebras/bialgebroids that are universal enveloping algebras of Lie algebras/Lie-Rinehart algebras, we look for necessary and sufficient conditions on an $A$-bialgebroid $\cB$ in order to claim that it is a Connes-Moscovici's bialgebroid $A \odot U_\K(L) \odot A$ for some $A$-anchored Lie algebra $(L,\omega)$.

% --------------------------------------------------------------------------------------------------------------------------------- %
% THE PRIMITIVES
% --------------------------------------------------------------------------------------------------------------------------------- %

\subsection{The primitives of the Connes-Moscovici's bialgebroid}\label{ssec:CMprimitives}

To begin with, we need a more detailed analysis of the space of primitives of $\cB_L$.

\begin{lemma}\label{lem:LieIdeal}
Let $\cB$ be an $A$-bialgebroid. Then the $\K$-vector subspace
\[\langle s-t \rangle \coloneqq \left\{s(a) - t(\op{a}) \mid a \in A\right\} \subseteq \cB\]
is an ideal in $\prim(\cB)$. The Lie bracket is explicitly given by
\[
\big[s(a) - t(\op{a}),s(b) - t(\op{b})\big] = s\big([a,b]\big) - t\big(\op{[a,b]}\big)
\]
and the anchor by
\[
\omega_{\cB}':\langle s-t \rangle \to \Derk{A}, \qquad s(a) - t(\op{a}) \mapsto [a,-],
\]
for all $a,b \in A$.
\end{lemma}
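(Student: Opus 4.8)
The plan is to verify, in sequence: (i) that $\langle s-t\rangle$ is contained in $\prim(\cB)$; (ii) that it is stable under the commutator bracket, with that bracket being the one displayed; (iii) that it is a Lie ideal of $\prim(\cB)$; and (iv) that the restriction of the anchor $\omega_\cB$ of Proposition~\ref{prop:PP} to it is $s(a)-t(\op a)\mapsto[a,-]$. No genuinely new ingredient is needed: each point is a short manipulation with the relations collected above, the only real care being required for the order-reversal built into $\op A$.

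For (i), by \eqref{eq:DeltaLin} one has $\Delta\big(s(a)-t(\op a)\big)=s(a)\tensor{A}1_\cB-1_\cB\tensor{A}t(\op a)$, so that the desired identity $\Delta\big(s(a)-t(\op a)\big)=\big(s(a)-t(\op a)\big)\tensor{A}1_\cB+1_\cB\tensor{A}\big(s(a)-t(\op a)\big)$ reduces, after cancellation, to $t(\op a)\tensor{A}1_\cB=1_\cB\tensor{A}s(a)$, which is exactly the left-hand relation in \eqref{eq:tak} with $m=n=1_\cB$. Hence $\langle s-t\rangle$ is a $\K$-subspace of $\prim(\cB)$.

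For (ii), since $s(A)$ and $t(\op A)$ commute in $\cB$ the mixed commutators $[s(a),t(\op b)]$ and $[t(\op a),s(b)]$ vanish, and bilinearity of the commutator leaves $\big[s(a)-t(\op a),s(b)-t(\op b)\big]=[s(a),s(b)]+[t(\op a),t(\op b)]$. As $s$ is a $\K$-algebra map, $[s(a),s(b)]=s(ab-ba)=s\big([a,b]\big)$; as $t\colon\op A\to\cB$ is a $\K$-algebra map it reverses products, whence $t(\op a)t(\op b)=t\big(\op{(ba)}\big)$ and $[t(\op a),t(\op b)]=t\big(\op{(ba)}\big)-t\big(\op{(ab)}\big)=-t\big(\op{[a,b]}\big)$. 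Summing yields the stated bracket, so $\langle s-t\rangle$ is a Lie subalgebra of $\prim(\cB)$.

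For (iii) and (iv), take $X\in\prim(\cB)$ and $a\in A$; bilinearity of the commutator gives
\[
\big[X,\,s(a)-t(\op a)\big]=\big(Xs(a)-s(a)X\big)-\big(Xt(\op a)-t(\op a)X\big),
\]
and the two relations of \eqref{eq:AmodPrim} rewrite the right-hand side as $s\big(\varepsilon(Xs(a))\big)-t\big(\op{\varepsilon(Xt(\op a))}\big)$; since $\varepsilon(Xs(a))=\varepsilon(Xt(\op a))=:c$ by \eqref{eq:epsiscambia}, this equals $s(c)-t(\op c)\in\langle s-t\rangle$. Thus $\langle s-t\rangle$ is an ideal of $\prim(\cB)$ in the sense of Definition~\ref{def:manydefs}, and by that definition its anchor $\omega_\cB'$ is the restriction of $\omega_\cB$ (which already lands in $\Derk{A}$, so nothing further need be checked). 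Evaluating it by \eqref{eq:omegaaction} and \eqref{eq:dotaction} and then \eqref{eq:epsibil}, \eqref{item:B6}, \eqref{eq:epsist},
\[
\omega_\cB'\big(s(a)-t(\op a)\big)(b)=\varepsilon\Big(\big(s(a)-t(\op a)\big)t(\op b)\Big)=\varepsilon\big(s(a)t(\op b)\big)-\varepsilon\big(t(\op a)t(\op b)\big)=ab-ba=[a,b],
\]
which is the inner derivation $[a,-]$ of $A$, as claimed. There is no substantial obstacle in any of this; the step demanding the most care is the computation for (iii), which hinges on having the identities \eqref{eq:AmodPrim} at hand, and throughout one must of course keep the opposite-algebra conventions straight.
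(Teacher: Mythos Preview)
Your proof is correct and follows essentially the same approach as the paper: both verify containment in $\prim(\cB)$ via \eqref{eq:DeltaLin} and \eqref{eq:tak}, both use \eqref{eq:AmodPrim} together with \eqref{eq:epsiscambia} to show that $[X,s(a)-t(\op a)]\in\langle s-t\rangle$ for arbitrary $X\in\prim(\cB)$, and both compute the anchor directly from \eqref{eq:dotaction}. The only cosmetic differences are that you verify the subalgebra property~(ii) separately before the ideal property~(iii) (whereas the paper passes straight to the ideal property and records the bracket formula afterwards), and that you use the $t$-version of \eqref{eq:dotaction} where the paper uses the $s$-version.
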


\begin{proof}
The fact that $\langle s-t \rangle$ is contained in $\prim(\cB)$ follows from 
\[\Delta_\cB(s(a) - t(\op{a})) \stackrel{\eqref{eq:DeltaLin}}{=} s(a) \tensor{A} 1_{\cB} - 1_\cB \tensor{A} t(\op{a}) \stackrel{\eqref{eq:tak}}{=} \big(s(a) - t(\op{a})\big) \tensor{A} 1_\cB - 1_\cB \tensor{A} \big(s(a) - t(\op{a})\big). \]
The fact that it is a Lie ideal in $\prim(\cB)$ with respect to the commutator bracket follows because
\begin{align*}
\big[X, s(a) - t(\op{a})\big] & = Xs(a) - Xt(\op{a}) - s(a)X + t(\op{a})X \stackrel{\eqref{eq:AmodPrim}}{=} s\big(\varepsilon\left(Xs(a)\right)\big) - t\big(\op{\varepsilon\left(Xt(\op{a})\right)}\big) \\
 & \stackrel{\eqref{item:B5}}{=} s\big(\varepsilon\left(Xs(a)\right)\big) - t\big(\op{\varepsilon\left(Xs(a)\right)}\big) \in \langle s-t \rangle.
\end{align*}
Now, a direct computation shows that
\[\big[s(a) - t(\op{a}),s(b) - t(\op{b})\big] = s\big([a,b]\big) - t\big(\op{[a,b]}\big) \]
for all $a,b \in A$ as claimed. Furthermore,
\[
\omega_\cB\big(s(a)-t(\op{a})\big)(b) \stackrel{\eqref{eq:omegaaction}}{=} \big(s(a)-t(\op{a})\big)\centerdot b \stackrel{\eqref{eq:dotaction}}{=} \varepsilon_\cB(s(a)s(b))-\varepsilon_\cB(t(\op{a})t(\op{b})) = [a,b]
\]
for all $a,b \in A$ and hence the proof is concluded.
\end{proof}

Lemma \ref{lem:LieIdeal} makes it clear why the counit $\epsilon_\cB : \cB_{\prim(\cB)} \to \cB$ of Theorem \ref{th:main} is not injective in general. Namely, every element of the form $a \otimes 1_U \otimes 1_A - 1_A \otimes 1_U \otimes a - 1_A \otimes \left(s(a) - t(\op{a})\right) \otimes 1_A$ for $a \in A$ lives in the kernel of $\epsilon_\cB$.

\begin{example}
Consider the $A$-bialgebroid $\cB = \End{\K}{A}$ from Example \ref{ex:bialgebroids}\ref{item:exb}. As we have seen in Example \ref{ex:Ders}, the primitive elements of $\cB$ are the derivations of $A$. It is easy to see that $\langle s-t\rangle$ are exactly the inner derivations. 
\end{example}

For $H$ a Hopf algebra and $A$ an $H$-module algebra, we will often write $1_{A}\otimes \prim(H) \otimes 1_{A}$ for $\left\{1_A \otimes h \otimes 1_A \mid h \in \prim(H)\right\}$.

\begin{proposition}\label{prop:primitive}
For any Hopf algebra $H$ and any $H$-module algebra $A$, we have that $1_A \otimes \prim(H) \otimes 1_A$ and $\langle s-t\rangle$ are subalgebras of $ \prim \left( A\odot  H \odot A\right)$. Moreover, we have an isomorphism of $A$-anchored Lie algebras
\[
\big(1_A \otimes \prim(H) \otimes 1_A\big) \ltimes_{\delta} \langle s-t\rangle \cong \prim \left( A\odot  H \odot A\right)
\]
given by the vector space sum.
\end{proposition}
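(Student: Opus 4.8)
The plan is to reduce the statement to the \emph{vector-space} decomposition
\[\prim(\cB)=\big(1_A\otimes\prim(H)\otimes1_A\big)\oplus\langle s-t\rangle ,\qquad \cB:=A\odot H\odot A,\]
and then to invoke Lemmas \ref{lem:LieIdeal} and \ref{lem:sdprod}. Indeed, $\langle s-t\rangle$ is an ideal in $\prim(\cB)$ by Lemma \ref{lem:LieIdeal}; $1_A\otimes\prim(H)\otimes1_A$ is a subalgebra, since \eqref{eq:CMprod} together with $h_{(1)}\cdot1_A=\varepsilon(h_{(1)})1_A$ gives $(1_A\otimes h\otimes1_A)(1_A\otimes h'\otimes1_A)=1_A\otimes hh'\otimes1_A$, whence $\big[1_A\otimes h\otimes1_A,1_A\otimes h'\otimes1_A\big]=1_A\otimes[h,h']\otimes1_A$ with $[h,h']\in\prim(H)$, while primitivity of $1_A\otimes h\otimes1_A$ for $h\in\prim(H)$ is immediate from the formula for $\Delta_\cB$; and the two summands meet only in $\{0\}$ because $1_A\otimes\prim(H)\otimes1_A\subseteq\K1_A\otimes H\otimes\K1_A$ while $\langle s-t\rangle\subseteq A\otimes\K1_H\otimes A$, so a common element lies in $\K1_A\otimes\K1_H\otimes\K1_A$ and its middle tensorand lies in $\prim(H)\cap\K1_H=\{0\}$. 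Granting the displayed equality, Lemma \ref{lem:sdprod} then yields at once the asserted isomorphism, with $\delta(1_A\otimes h\otimes1_A)=[1_A\otimes h\otimes1_A,-]$.

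So everything comes down to the inclusion $\prim(\cB)\subseteq 1_A\otimes\prim(H)\otimes1_A+\langle s-t\rangle$. First I would record the $\K$-linear identification
\[\cB\tensor{A}\cB\xrightarrow{\ \sim\ }A\otimes H\otimes A\otimes H\otimes A,\qquad (a\otimes h\otimes b)\tensor{A}(a'\otimes h'\otimes b')\longmapsto a\otimes h\otimes ba'\otimes h'\otimes b',\]
with inverse $a\otimes h\otimes b\otimes h'\otimes b'\mapsto(a\otimes h\otimes b)\tensor{A}(1_A\otimes h'\otimes b')$; its well-definedness uses only \eqref{eq:tensA} and \eqref{eq:CMprod}. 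Under this identification $\Delta_\cB\big(\sum_i a_i\otimes h_i\otimes b_i\big)$ becomes $\sum_i a_i\otimes(h_i)_{(1)}\otimes1_A\otimes(h_i)_{(2)}\otimes b_i$, while $\xi\tensor{A}1_\cB$ and $1_\cB\tensor{A}\xi$ become $\sum_i a_i\otimes h_i\otimes b_i\otimes1_H\otimes1_A$ and $\sum_i1_A\otimes1_H\otimes a_i\otimes h_i\otimes b_i$ respectively. Hence $\xi=\sum_i a_i\otimes h_i\otimes b_i$ is primitive if and only if
\[\sum_i a_i\otimes(h_i)_{(1)}\otimes1_A\otimes(h_i)_{(2)}\otimes b_i=\sum_i a_i\otimes h_i\otimes b_i\otimes1_H\otimes1_A+\sum_i1_A\otimes1_H\otimes a_i\otimes h_i\otimes b_i\]
in $A\otimes H\otimes A\otimes H\otimes A$.

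Now fix a $\K$-basis $\{1_H\}\cup\{e_k\}_k$ of $H=\K1_H\oplus\ker\varepsilon$ and a $\K$-basis $\{1_A\}\cup\{f_\alpha\}_\alpha$ of $A=\K1_A\oplus V$, and write $\Delta(e_k)=e_k\otimes1_H+1_H\otimes e_k+\overline\Delta(e_k)$ with $\overline\Delta(e_k)=\sum_{k',k''}\nu^{k}_{k'k''}e_{k'}\otimes e_{k''}\in\ker\varepsilon\otimes\ker\varepsilon$. Grouping the middle tensorand of a primitive $\xi$ along $\{1_H\}\cup\{e_k\}$, write $\xi=\xi_0+\sum_k\xi_k$ with $\xi_0=\sum_j c_j\otimes1_H\otimes d_j$ and $\xi_k=\sum_l p_{kl}\otimes e_k\otimes q_{kl}$, and project the primitivity identity onto the components of its two $H$-slots. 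The $(1_H,1_H)$-component only involves $\xi_0$ and reads $\sum_j c_j\otimes1_A\otimes d_j=\sum_j c_j\otimes d_j\otimes1_A+\sum_j1_A\otimes c_j\otimes d_j$; expanding $\sum_j c_j\otimes d_j$ in the basis $\{1_A\}\cup\{f_\alpha\}$ of $A$ and comparing the eight components in $A^{\otimes 3}$ forces $\sum_j c_j\otimes d_j=a\otimes1_A-1_A\otimes a$ for some $a\in A$, that is, $\xi_0=s(a)-t(\op{a})\in\langle s-t\rangle$. Consequently $\xi-\xi_0=\sum_k\xi_k$ is again primitive; its $(e_k,1_H)$- and $(1_H,e_k)$-components give $\sum_l p_{kl}\otimes1_A\otimes q_{kl}=\sum_l p_{kl}\otimes q_{kl}\otimes1_A=\sum_l1_A\otimes p_{kl}\otimes q_{kl}$, whence successively $\sum_l p_{kl}\otimes q_{kl}=r_k\otimes1_A$ for some $r_k\in A$ and then $r_k\in\K1_A$, say $r_k=t_k1_A$; this means $\xi-\xi_0=1_A\otimes h\otimes1_A$ with $h:=\sum_k t_k e_k\in\ker\varepsilon$. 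Finally the $(e_k,e_{k'})$-components give $\sum_{k''}\nu^{k''}_{kk'}t_{k''}=0$ for all $k,k'$, i.e.\ $\overline\Delta(h)=0$ by linearity of $\overline\Delta$, i.e.\ $h\in\prim(H)$. Therefore $\xi=\xi_0+1_A\otimes h\otimes1_A\in\langle s-t\rangle+1_A\otimes\prim(H)\otimes1_A$, which is what remained to prove.

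The only genuine difficulty is the bookkeeping in the last paragraph: one has to track carefully which summands of the primitivity identity contribute to each component of the two $H$-slots (the reduced-coproduct terms $\overline\Delta(e_k)$ being the delicate point), and then to use repeatedly the direct-sum splittings $H=\K1_H\oplus\ker\varepsilon$ and $A=\K1_A\oplus V$ to extract the scalar/primitivity conclusions from equalities inside $A^{\otimes 3}$ and $H\otimes H$. All of the Lie-theoretic substance is already packaged in Lemmas \ref{lem:LieIdeal} and \ref{lem:sdprod}.
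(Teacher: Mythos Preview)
Your proposal is correct and follows essentially the same strategy as the paper: reduce to Lemma~\ref{lem:sdprod} via the vector-space decomposition, then prove the inclusion $\prim(\cB)\subseteq \big(1_A\otimes\prim(H)\otimes1_A\big)+\langle s-t\rangle$ by expanding the primitivity equation in a convenient $\K$-basis and projecting onto components.

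The only organizational difference is which factor you slice along first. The paper fixes a basis $\{e_i\}$ of $A$ with $e_0=1_A$, writes $\xi=\sum_{i,j}e_i\otimes h_{ij}\otimes e_j$, and uses the dual functionals $e_k^*$ on the outer $A$-slots of $\cB\tensor{A}\cB$ to isolate the $h_{ij}$'s; it then shows $h_{kl}=0$ for $k,l\neq0$, that $h_{00}\in\prim(H)$, and that $h_{k0}=-h_{0k}\in\K1_H$. You instead pass through the isomorphism $\cB\tensor{A}\cB\cong A\otimes H\otimes A\otimes H\otimes A$, fix a basis of $H=\K1_H\oplus\ker\varepsilon$, and project onto the two $H$-slots; the resulting equations in $A^{\otimes3}$ then force the $A$-parts to be of the right shape. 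The two arguments are mirror images of each other (interchanging the roles of $A$ and $H$) and neither is materially shorter; your explicit identification of $\cB\tensor{A}\cB$ makes the bookkeeping slightly more transparent, while the paper's use of dual basis elements avoids having to name the reduced coproduct $\overline\Delta$.
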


\begin{proof}
The fact that $\langle s-t\rangle$ is an ideal in $\prim(A \odot  H \odot  A)$ has been established in Lemma \ref{lem:LieIdeal} for a general $A$-bialgebroid. The fact that $1_A \otimes \prim(H) \otimes 1_A$ is a subalgebra is a straightforward computation.
Moreover, notice that if $1_A \otimes X \otimes 1_A = a \otimes 1_H \otimes 1_A - 1_A \otimes 1_H \otimes a$ for some $X \in \prim(H)$ and some $a \in A$, then
\begin{align*}
0 & = (A \otimes \varepsilon_H \otimes A)(1_A \otimes X \otimes 1_A) = (A \otimes \varepsilon_H \otimes A)(a \otimes 1_H \otimes 1_A - 1_A \otimes 1_H \otimes a) \\
 & = a \otimes 1_A - 1_A \otimes a
\end{align*}
which implies that
\[a \otimes 1_H \otimes 1_A - 1_A \otimes 1_H \otimes a = \eta(a \otimes \op{1_A} - 1_A \otimes \op{a})(1_A \otimes 1_H \otimes 1_A) = 0.\]
In view of Lemma \ref{lem:sdprod}, we are left to check that
\begin{equation}\label{eq:incl}
\prim \left( A\odot  H \odot A\right) \subseteq (1_A \otimes \prim(H) \otimes 1_A) + \langle s-t\rangle.
\end{equation}

Let us consider a primitive element $\xi \in A\odot  H \odot A $. Fix a basis $\left\{e_{i}\mid i\in S\right\} $ for $A$ as a vector space, where $S$ is some set of indexes with a distinguished index $0$ and $e_{0}=1_{A}$. Write
\begin{equation*}
\xi = \sum_{i,j}e_{i}\otimes h_{ij}\otimes e_{j}
\end{equation*}
where almost all the $h_{ij}$ are $0$. Consider also the dual elements $\left\{ e_{i}^{\ast }\mid i\in S\right\} $ of the $e_{i}$'s. Since $\xi$ is primitive, the following relation holds
\begin{equation}\label{eq:primitive1}
\begin{gathered}
\sum_{i,j}\left( e_{i}\otimes \left( h_{ij}\right) _{\left( 1\right) }\otimes 1_{A}\right) \tensor{A}\left( 1_{A}\otimes \left( h_{ij}\right) _{\left( 2\right) }\otimes e_{j}\right) \\
 =\sum_{i,j}\left( e_{i}\otimes h_{ij}\otimes e_{j}\right) \tensor{A}\left( 1_{A}\otimes 1_{H}\otimes 1_{A}\right) +\left( 1_{A}\otimes 1_{H}\otimes 1_{A}\right) \tensor{A}\left( e_{i}\otimes h_{ij}\otimes e_{j}\right) .  
\end{gathered}
\end{equation}
For $k \neq 0 \neq l$, let us apply the $\K$-linear morphism $\left( e_{k}^{\ast }\otimes H\otimes A\right) \tensor{A}\left( A\otimes H\otimes e_{l}^{\ast }\right) $ to both sides of the identity \eqref{eq:primitive1}. We find out that 
\begin{equation*}
\sum \left( \left( h_{kl}\right) _{\left( 1\right) }\otimes 1_{A}\right) \tensor{A}\left( 1_{A}\otimes \left( h_{kl}\right) _{\left( 2\right) }\right) =0
\end{equation*}
from which it follows that $h_{kl}=0$, by applying $\left( H\otimes A\right) \tensor{A}\left( A\otimes \varepsilon _{H}\right) $ to both sides again. Therefore,
\begin{equation*}
\xi = 1_{A}\otimes h_{00}\otimes 1_{A}+\sum_{i\neq 0}e_{i}\otimes h_{i0}\otimes 1_{A}+\sum_{i\neq 0}1_{A}\otimes h_{0i}\otimes e_{i}.
\end{equation*}
Consider again the identity \eqref{eq:primitive1}, that now rewrites
\begin{gather*}
\left[ 
\begin{array}{c}
\sum \left( 1_{A}\otimes \left( h_{00}\right) _{\left( 1\right) }\otimes 1_{A}\right) \tensor{A}\left( 1_{A}\otimes \left( h_{00}\right) _{\left( 2\right) }\otimes 1_{A}\right) + \\ 
+\sum_{i\neq 0}\left( e_{i}\otimes \left( h_{i0}\right) _{\left( 1\right) }\otimes 1_{A}\right) \tensor{A}\left( 1_{A}\otimes \left( h_{i0}\right) _{\left( 2\right) }\otimes 1_{A}\right) + \\ 
+\sum_{i\neq 0}\left( 1_{A}\otimes \left( h_{0i}\right) _{\left( 1\right) }\otimes 1_{A}\right) \tensor{A}\left( 1_{A}\otimes \left( h_{0i}\right) _{\left( 2\right) }\otimes e_{i}\right)
\end{array}
\right] \\
 = \left[ 
\begin{array}{c}
\left( 1_{A}\otimes h_{00}\otimes 1_{A}\right) \tensor{A}\left( 1_{A}\otimes 1_{H}\otimes 1_{A}\right) +\left( 1_{A}\otimes 1_{H}\otimes 1_{A}\right) \tensor{A}\left( 1_{A}\otimes h_{00}\otimes 1_{A}\right) + \\ 
+\sum_{i\neq 0}\left( e_{i}\otimes h_{i0}\otimes 1_{A}\right) \tensor{A}\left( 1_{A}\otimes 1_{H}\otimes 1_{A}\right) +\left( 1_{A}\otimes 1_{H}\otimes 1_{A}\right) \tensor{A}\left( e_{i}\otimes h_{i0}\otimes 1_{A}\right) + \\ 
+\sum_{i\neq 0}\left( 1_{A}\otimes h_{0i}\otimes e_{i}\right) \tensor{A}\left( 1_{A}\otimes 1_{H}\otimes 1_{A}\right) +\left( 1_{A}\otimes 1_{H}\otimes 1_{A}\right) \tensor{A}\left( 1_{A}\otimes h_{0i}\otimes e_{i}\right)
\end{array}
\right] .
\end{gather*}
If we apply $\left( 1_{A}^{\ast }\otimes H\otimes A\right) \tensor{A} \left( A\otimes H\otimes 1_{A}^{\ast }\right) $ then we get that
\begin{equation}\label{eq:primitive3}
\begin{gathered}
\sum \left( \left( h_{00}\right) _{\left( 1\right) }\otimes 1_{A}\right) \tensor{A}\left( 1_{A}\otimes \left( h_{00}\right) _{\left( 2\right) }\right) = \\ 
\left[
\begin{array}{c} 
\left( h_{00}\otimes 1_{A}\right) \tensor{A}\left( 1_{A}\otimes 1_{H}\right) +\left( 1_{H}\otimes 1_{A}\right) \tensor{A}\left(1_{A}\otimes h_{00}\right) + \\
+ \sum_{i\neq 0}\left( 1_{H}\otimes 1_{A}\right) \tensor{A}\left( e_{i}\otimes h_{i0}\right) +\left( h_{0i}\otimes e_{i}\right) \tensor{A}\left( 1_{A}\otimes 1_{H}\right) 
\end{array}\right].
\end{gathered}
\end{equation}
By resorting to the $\K$-linear isomorphism $\left( H\otimes A\right) \tensor{A}\left( A\otimes H\right) \cong H\otimes A\otimes H$, the equality \eqref{eq:primitive3} becomes 
\begin{equation} \label{eq:primitives2}
\sum \left( h_{00}\right) _{\left( 1\right) }\otimes 1_{A}\otimes \left( h_{00}\right) _{\left( 2\right) } = h_{00}\otimes 1_{A}\otimes 1_{H}+1_{H}\otimes 1_{A}\otimes
h_{00}+\sum_{i\neq 0}h_{0i}\otimes e_{i}\otimes 1_{H}+1_{H}\otimes e_{i}\otimes h_{i0}.
\end{equation}
By applying $H\otimes 1_{A}^{\ast }\otimes H$ to both sides of the identity \eqref{eq:primitives2} we get that
\begin{equation*}
\sum \left( h_{00}\right) _{\left( 1\right) }\otimes \left( h_{00}\right) _{\left( 2\right) }=h_{00}\otimes 1_{H}+1_{H}\otimes h_{00},
\end{equation*}
whence $h_{00}$ is primitive in $H$, and by applying $H\otimes e_{k}^{\ast}\otimes H$ for all $k\neq 0$ we get 
\begin{equation}
1_{H}\otimes h_{k0}+h_{0k}\otimes 1_{H}=0.  \label{eq:primitive4}
\end{equation}
By applying further $H \otimes \varepsilon_H$ we find that $h_{0k} = -\varepsilon(h_{k0})1_H$ and hence from \eqref{eq:primitive4} we deduce that
\[ 0 = 1_{H}\otimes h_{k0}-\varepsilon(h_{k0})1_H\otimes 1_{H} = 1_H \otimes \left(h_{k0} - \varepsilon(h_{k0})1_H\right), \]
which in turn entails that $h_{k0} - \varepsilon(h_{k0})1_H = 0$ by applying $\varepsilon_H \otimes H$ to both sides, that is, $h_{k0} = \varepsilon(h_{k0})1_H$ for all $k\neq 0$. Summing up,
\begin{equation*}
\xi = 1_{A}\otimes h_{00}\otimes 1_{A} + \sum_{k\neq 0}\varepsilon(h_{k0})\left(e_{k}\otimes 1_H\otimes 1_{A} - 1_{A}\otimes 1_H\otimes e_{k}\right)
\end{equation*}
which proves that the inclusion \eqref{eq:incl} holds.
\end{proof}

\begin{corollary}\label{cor:primitive}
For any $A$-anchored Lie algebra $(L,\omega)$ we have 
\[\prim \left( A\odot U_{\K}\left(L\right) \odot A\right) \cong L \ltimes_{\delta} \langle s-t\rangle.\]
\end{corollary}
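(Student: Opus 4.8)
The plan is to obtain the statement as an immediate specialization of Proposition \ref{prop:primitive} to the Hopf algebra $H = U_{\K}(L)$, combined with the classical Milnor--Moore identification $\prim\big(U_{\K}(L)\big) \cong L$, which is available since $\K$ has characteristic zero. Concretely, the canonical map $j_L : L \to U_{\K}(L)$ of \eqref{eq:jL} induces a Lie algebra isomorphism $L \cong \prim\big(U_{\K}(L)\big)$, so that $X \mapsto 1_A \otimes x \otimes 1_A = J_L(X)$ is a Lie algebra isomorphism $L \cong 1_A \otimes \prim\big(U_{\K}(L)\big) \otimes 1_A$.

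First I would check that this last isomorphism is one of $A$-anchored Lie algebras, i.e.\ that the anchor which $1_A \otimes \prim(U_{\K}(L)) \otimes 1_A$ inherits as a subalgebra of $\prim(\cB_L)$ is carried onto $\omega$ by $X \mapsto J_L(X)$. This is exactly the computation already performed inside the proof of Theorem \ref{th:main}, where it is shown that $\omega_{\cB_L}\big(J_L(X)\big)(a) = \omega(X)(a)$ for all $X \in L$ and $a \in A$; so there is nothing new to verify here.

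Then I would invoke Proposition \ref{prop:primitive} with $H = U_{\K}(L)$, which provides an isomorphism of $A$-anchored Lie algebras
\[
\big(1_A \otimes \prim(U_{\K}(L)) \otimes 1_A\big) \ltimes_{\delta} \langle s-t\rangle \;\cong\; \prim\big(A \odot U_{\K}(L) \odot A\big),
\]
where, by Lemma \ref{lem:sdprod}, the action $\delta$ sends a primitive $1_A \otimes x \otimes 1_A$ to the derivation $\big[1_A \otimes x \otimes 1_A, -\big]$ of $\langle s-t\rangle$. Transporting the left-hand side along the isomorphism $L \cong 1_A \otimes \prim(U_{\K}(L)) \otimes 1_A$ of the previous steps — under which, combining Lemma \ref{lem:LieIdeal}, \eqref{eq:dotaction} and the computation in the proof of Theorem \ref{th:main}, $\delta(X)$ becomes the derivation $s(a) - t(\op{a}) \mapsto s\big(\omega(X)(a)\big) - t\big(\op{\omega(X)(a)}\big)$ of $\langle s-t\rangle$ — yields the asserted isomorphism $\prim\big(A \odot U_{\K}(L) \odot A\big) \cong L \ltimes_{\delta} \langle s-t\rangle$. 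The compatibility condition \eqref{eq:deltaomega} needed to form $L \ltimes_{\delta} \langle s-t\rangle$ is then automatic, being transported from the semi-direct product provided by Lemma \ref{lem:sdprod}.

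I do not expect a genuine obstacle: the corollary is essentially an assembly of results already in place, and the only care required is in tracking the anchor and the semi-direct-product datum $\delta$ through the identification $L \cong \prim(U_{\K}(L))$, both of which have in effect already been computed earlier (in the proof of Theorem \ref{th:main} and in Lemma \ref{lem:LieIdeal}).
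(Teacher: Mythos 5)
Your proposal is correct and follows essentially the same route as the paper: the paper's proof likewise invokes the Milnor--Moore identification $\prim(U_\K(L)) = L$ (in characteristic zero) and the observation that $L \cong 1_A \otimes L \otimes 1_A$ as $A$-anchored Lie algebras, and then specializes Proposition \ref{prop:primitive} to $H = U_\K(L)$. Your extra bookkeeping of the anchor (via the computation in the proof of Theorem \ref{th:main}) and of $\delta$ only makes explicit what the paper leaves as ``clear''.
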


\begin{proof}
It follows from \cite[Theorem 5.18]{MilnorMoore} that $\prim(U_\K(L)) = L$. Moreover, it is clear that $L \cong 1_A \otimes L \otimes 1_A$ as $A$-anchored Lie algebras.
\end{proof}

It is evident from Corollary \ref{cor:primitive} why, in general, the unit $\gamma_{L}: L \to \prim(\cB_L)$ from Theorem \ref{th:main} cannot be surjective.

% --------------------------------------------------------------------------------------------------------------------------------- %
% PRIMITIVELY GENERATED BIALGEBROIDS
% --------------------------------------------------------------------------------------------------------------------------------- %

\subsection{Primitively generated bialgebroids}\label{ssec:primgen}

Let $\cB $ be an $A$-bialgebroid and consider the $\env{A}$-bimodule 
\[
M_\cB  \coloneqq \env{A} \otimes \prim(\cB ) \otimes \env{A}.
\]
There is a canonical $\env{A}$-bilinear map $\varphi_\cB  : M_\cB  \to \cB $ given by
\[
\xymatrix{
\env{A} \otimes \prim(\cB ) \otimes \env{A} \ar[rr]^-{\eta_\cB  \otimes \theta_\cB  \otimes \eta_\cB } && \cB  \otimes \cB  \otimes \cB  \ar[r]^-{m_2} & \cB ,
}
\]
where $\theta_\cB  : \prim(\cB ) \to \cB $ is the inclusion and $m_2:\cB  \otimes \cB  \otimes \cB  \to \cB , x \otimes y \otimes z \mapsto xyz$. Therefore, by the universal property of the tensor $\env{A}$-ring $T_{\env{A}}\left(M_\cB \right)$ (considered as the free $\env{A}$-ring on the $\env{A}$-bimodule $M_\cB $; see \cite[Theorem VII.3.2]{MacLane} or \cite[Proposition 1.4.1]{Nichols}), there exists a unique morphism of $\env{A}$-rings
\begin{equation}\label{eq:Phi}
\Phi_\cB :T_{\env{A}}\left(M_\cB \right) \to \cB 
\end{equation}
that extends $\varphi_\cB $. We set $\varsigma_\cB:M_\cB \to T_{\env{A}}\left(M_\cB \right)$ for the canonical inclusion. 

Since $T_{\env{A}}\left(M_\cB \right)$ is a graded $\env{A}$-ring with grading given by
\[T_{\env{A}}\left(M_\cB \right)_0 \coloneqq \env{A} \qquad \text{and} \qquad T_{\env{A}}\left(M_\cB \right)_n \coloneqq M_\cB  \tensor{\env{A}} \cdots \tensor{\env{A}} M_\cB  \qquad (n \text{ times}),\]
for all $n\geq 1$, $\cB $ inherits a natural filtration as $\env{A}$-ring given by
\begin{equation}\label{eq:filtration}
F_n(\cB ) \coloneqq \Phi_\cB \left(\bigoplus_{k=0}^{n}T_{\env{A}}\left(M_\cB \right)_k\right).
\end{equation}
We set by definition $F_{-1}(\cB ) \coloneqq 0$, as usual. Notice that $F_1(\cB )$ is the $\env{A}$-subbimodule of $\cB $ generated by $\env{A}$ and $\prim(\cB )$, thus we call $\{F_n(\cB) \mid n \in \N\}$ the \emph{primitive filtration}.

\begin{example}\label{ex:CMfiltration}
Let $(L,\omega)$ be an $A$-anchored Lie algebra and consider the Connes-Moscovici's bialgebroid $\cB _L = A \odot  U_\K(L) \odot  A$. Recall that $U_\K(L)$ is a filtered $\K$-algebra with filtration induced by the canonical projection $T_\K(L) \to U_\K(L)$. Then 
\begin{equation}\label{eq:CMfiltration}
F_n\left(A \odot  U_\K(L) \odot  A\right) = A \otimes F_n\left(U_\K(L)\right) \otimes A.
\end{equation}
\end{example}

\begin{definition}\label{def:primgen}
We say that an $A$-bialgebroid $\cB $ is \emph{primitively generated} if $\cB  = \bigcup_{n\geq 0}F_n(\cB )$.
\end{definition}

\begin{remark}\label{rem:primgen}
Definition \ref{def:primgen} is given in the same spirit of \cite[page 239]{MilnorMoore}. 
In particular, the Connes-Moscovici's bialgebroid $A\odot  U_\K(L)\odot  A$ of an $A$-anchored Lie algebra $(L,\omega)$ is primitively generated. Notice also that $\cB $ is primitively generated if and only if $\Phi_\cB$ of \eqref{eq:Phi} is surjective.
\end{remark}

\begin{proposition}\label{lem:PhiNat}
The canonical morphism $\Phi_\cB$ is natural in $\cB \in \Bialgd_A$. Namely, every morphism $\phi:\cB \to \cB'$ of $A$-bialgebroids induces a morphism $T_\phi:T_{\env{A}}\left(M_\cB \right) \to T_{\env{A}}\left(M_{\cB'} \right)$ of graded $\env{A}$-rings in a functorial way and the following diagram commutes
\begin{equation}\label{eq:natural}
\begin{gathered}
\xymatrix{
T_{\env{A}}\left(M_{\cB}\right) \ar[r]^-{T_{\phi}} \ar[d]_-{\Phi_{\cB}} & T_{\env{A}}\left(M_{\cB'}\right) \ar[d]^-{\Phi_{\cB'}} \\
\cB_{L} \ar[r]_-{\phi} & \cB.
}
\end{gathered}
\end{equation}
\end{proposition}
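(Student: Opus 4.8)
The plan is to construct the top horizontal arrow $T_\phi$ functorially out of $\phi$ and then to deduce the commutativity of \eqref{eq:natural} from the universal property that defines $\Phi_\cB$, which reduces the verification to a chase on generators. First I would observe that, by Proposition \ref{prop:PP}, a morphism $\phi:\cB\to\cB'$ of $A$-bialgebroids restricts to a morphism $\bP(\phi):\prim(\cB)\to\prim(\cB')$ satisfying $\phi\circ\theta_\cB=\theta_{\cB'}\circ\bP(\phi)$, and that this assignment is functorial. Tensoring with $\env{A}$ on both sides then yields a morphism of $\env{A}$-bimodules $M_\phi\coloneqq \env{A}\otimes\bP(\phi)\otimes\env{A}:M_\cB\to M_{\cB'}$, and $\cB\mapsto M_\cB$, $\phi\mapsto M_\phi$ is a functor from $\Bialgd_A$ to the category of $\env{A}$-bimodules. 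Applying the (covariant) tensor-algebra functor $T_{\env{A}}(-)$ from $\env{A}$-bimodules to graded $\env{A}$-rings (see \cite[Theorem VII.3.2]{MacLane} or \cite[Proposition 1.4.1]{Nichols}), I would set $T_\phi\coloneqq T_{\env{A}}(M_\phi):T_{\env{A}}(M_\cB)\to T_{\env{A}}(M_{\cB'})$; this is automatically a morphism of graded $\env{A}$-rings, it satisfies $T_\phi\circ\varsigma_\cB=\varsigma_{\cB'}\circ M_\phi$, and it is functorial in $\phi$ (i.e. $T_{\id_\cB}=\id$ and $T_{\psi\circ\phi}=T_\psi\circ T_\phi$) because it is a composite of functors.

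To check that the square commutes, I would note that both $\Phi_{\cB'}\circ T_\phi$ and $\phi\circ\Phi_\cB$ are morphisms of $\env{A}$-rings out of $T_{\env{A}}(M_\cB)$, and that they automatically agree on the degree-zero component $\env{A}$, since $\phi$ intertwines source and target so that $\phi\circ\eta_\cB=\eta_{\cB'}$. By the universal property of the tensor $\env{A}$-ring it then suffices to check that the two composites agree after precomposition with $\varsigma_\cB$, that is, on the generating subbimodule $M_\cB$. For $\alpha,\beta\in\env{A}$ and $X\in\prim(\cB)$, running $\alpha\otimes X\otimes\beta$ through $\varphi_\cB=\Phi_\cB\circ\varsigma_\cB$ and then $\phi$ produces $\eta_{\cB'}(\alpha)\,\phi(\theta_\cB(X))\,\eta_{\cB'}(\beta)=\eta_{\cB'}(\alpha)\,\theta_{\cB'}(\bP(\phi)(X))\,\eta_{\cB'}(\beta)$, which is exactly $\varphi_{\cB'}\big(M_\phi(\alpha\otimes X\otimes\beta)\big)=\Phi_{\cB'}\big(T_\phi(\varsigma_\cB(\alpha\otimes X\otimes\beta))\big)$ using $T_\phi\circ\varsigma_\cB=\varsigma_{\cB'}\circ M_\phi$ and $\Phi_{\cB'}\circ\varsigma_{\cB'}=\varphi_{\cB'}$. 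Hence $\Phi_{\cB'}\circ T_\phi=\phi\circ\Phi_\cB$.

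I do not expect a genuine obstacle: the statement is a formal consequence of the functoriality of $\prim(-)$ (Proposition \ref{prop:PP}), the functoriality of the tensor-algebra construction, and the universal property characterizing $\Phi_\cB$. The only points deserving a moment of care are that $M_\phi$ is genuinely $\env{A}$-bilinear — which is precisely where one uses that $\phi$ preserves primitives — and that $\phi$ respects the $\env{A}$-ring structures, so that the reduction of the commutativity check to the generating bimodule $M_\cB$ is legitimate.
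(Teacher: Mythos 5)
Your proposal is correct and follows essentially the same route as the paper: $T_\phi$ is obtained from $\env{A}\otimes\bP(\phi)\otimes\env{A}$ via the universal property (equivalently, functoriality) of the tensor $\env{A}$-ring, and the commutativity of \eqref{eq:natural} is reduced to a check on the generating $\env{A}$-bimodule $M_\cB$. You simply spell out the ``standard argument'' that the paper leaves implicit, including the degree-zero agreement via $\phi\circ\eta_\cB=\eta_{\cB'}$ and the generator chase using $\phi\circ\theta_\cB=\theta_{\cB'}\circ\bP(\phi)$.
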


\begin{proof}
By the universal property of the tensor $\env{A}$-ring and a standard argument, any morphism $\phi:\cB \to \cB'$ of $A$-bialgebroids induces a unique morphism $T_\phi : T_{\env{A}}\left(M_\cB \right) \to T_{\env{A}}\left(M_{\cB'} \right)$ of graded $\env{A}$-rings extending $\env{A} \otimes \bP(\phi) \otimes \env{A}$. It satisfies $\phi \circ \Phi_\cB = \Phi_{\cB'}\circ T_\phi$.
\end{proof}

\begin{corollary}\label{cor:surjectivity}
Any morphism $\phi:\cB \to \cB'$ of $A$-bialgebroids is filtered with respect to the primitive filtration, that is to say, $\phi \left(F_n\left(\cB\right)\right) \subseteq F_n(\cB' )$ for all $n \geq 0$. In particular, the component of the counit $\epsilon_\cB $ corresponding to an $A$-bialgebroid $\cB $ is a filtered morphism.
Furthermore, $\epsilon_\cB $ is surjective if and only if $\cB $ is primitively generated.
\end{corollary}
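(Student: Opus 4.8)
The plan is to deduce both statements from the naturality of $\Phi$ recorded in Proposition~\ref{lem:PhiNat} together with Remark~\ref{rem:primgen}.

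For the first assertion, let $\phi:\cB\to\cB'$ be a morphism of $A$-bialgebroids. By Proposition~\ref{lem:PhiNat} it induces a morphism $T_\phi:T_{\env{A}}(M_\cB)\to T_{\env{A}}(M_{\cB'})$ of \emph{graded} $\env{A}$-rings with $\phi\circ\Phi_\cB=\Phi_{\cB'}\circ T_\phi$; being graded, $T_\phi$ carries $\bigoplus_{k=0}^{n}T_{\env{A}}(M_\cB)_k$ into $\bigoplus_{k=0}^{n}T_{\env{A}}(M_{\cB'})_k$. Applying $\Phi_{\cB'}$ and invoking the definition \eqref{eq:filtration} of the primitive filtration, I would obtain
\[
\phi\big(F_n(\cB)\big)=\Phi_{\cB'}\Big(T_\phi\big(\textstyle\bigoplus_{k=0}^{n}T_{\env{A}}(M_\cB)_k\big)\Big)\ \subseteq\ \Phi_{\cB'}\Big(\textstyle\bigoplus_{k=0}^{n}T_{\env{A}}(M_{\cB'})_k\Big)=F_n(\cB')
\]
for every $n\geq0$, which is exactly the claim. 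Since the counit of the adjunction of Theorem~\ref{th:main} is, for each $A$-bialgebroid $\cB$, a morphism $\epsilon_\cB:A\odot U_\K(\prim(\cB))\odot A\to\cB$ of $A$-bialgebroids (being a component of a natural transformation into $\id_{\Bialgd_A}$), the ``in particular'' clause is the special case $\phi=\epsilon_\cB$.

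The last assertion will follow once I establish that $\Img{\epsilon_{\cB}}=\Img{\Phi_{\cB}}$, both being the $\env{A}$-subring of $\cB$ generated by $\theta_\cB(\prim(\cB))$. On the one hand $\Phi_\cB$ extends $\varphi_\cB=m_2\circ(\eta_\cB\otimes\theta_\cB\otimes\eta_\cB)$ and $T_{\env{A}}(M_\cB)$ is generated as an $\env{A}$-ring by its degree-one part $M_\cB$, so $\Img{\Phi_{\cB}}$ is the $\env{A}$-subring generated by $\varphi_\cB(M_\cB)$, i.e.\ by $\theta_\cB(\prim(\cB))$. On the other hand $A\odot U_\K(\prim(\cB))\odot A$ is generated as an $\env{A}$-ring by $1_A\otimes\prim(\cB)\otimes 1_A$ — here one uses that the elements $1_A\otimes h\otimes 1_A$ generate the total algebra over $\eta$ and that $U_\K(\prim(\cB))$ is generated as an algebra by its primitives $\prim(U_\K(\prim(\cB)))=\prim(\cB)$, cf.\ \cite[Theorem 5.18]{MilnorMoore} — and the image of $1_A\otimes\prim(\cB)\otimes 1_A$ under the explicit formula for $\epsilon_\cB$ from Theorem~\ref{th:main} is $\vartheta_\cB(\prim(\cB))=\theta_\cB(\prim(\cB))$. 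Granting $\Img{\epsilon_{\cB}}=\Img{\Phi_{\cB}}$, the map $\epsilon_\cB$ is surjective if and only if $\Phi_\cB$ is, and the latter, by Remark~\ref{rem:primgen}, holds precisely when $\cB$ is primitively generated.

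I do not expect a genuine obstacle: everything is bookkeeping with the tensor $\env{A}$-ring and with the explicit description of the counit. The only point demanding a little care is the equality $\Img{\epsilon_{\cB}}=\Img{\Phi_{\cB}}$, i.e.\ that the tensor-$\env{A}$-ring construction behind $\Phi_\cB$ and the Connes--Moscovici construction behind $\epsilon_\cB$ produce the very same object, namely the $\env{A}$-subring of $\cB$ generated by its primitives; this rests only on the universal properties of $T_{\env{A}}(-)$ and $U_\K(-)$ and on $\prim(U_\K(L))=L$.
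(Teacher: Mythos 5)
Your proof is correct, and the first half (filteredness via the graded morphism $T_\phi$ and the commuting square of Proposition \ref{lem:PhiNat}) is exactly the paper's argument. For the ``furthermore'' clause, however, you take a genuinely different route. The paper deduces $\Img{\epsilon_\cB} = \Img{\Phi_\cB}$ from the naturality square $\epsilon_\cB \circ \Phi_{\cB_L} = \Phi_\cB \circ T_{\epsilon_\cB}$ (with $L = \prim(\cB)$), using that $\Phi_{\cB_L}$ is surjective (Example \ref{ex:CMfiltration}) and that $T_{\epsilon_\cB}$ is surjective because $\bP(\epsilon_\cB)$ is, the latter resting on the description of $\prim(\cB_L)$ from Proposition \ref{prop:primitive}. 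You instead identify both images directly as the $\env{A}$-subring of $\cB$ generated by $\theta_\cB(\prim(\cB))$, which needs only that $T_{\env{A}}(M_\cB)$ is generated in degree one, that $A \odot U_\K(L) \odot A$ is generated as an $\env{A}$-ring by $1_A \otimes L \otimes 1_A$ (here one should note $a \otimes u \otimes b = \eta_{\cB_L}(a \otimes \op{b})(1_A \otimes u \otimes 1_A)$ and $(1_A \otimes x \otimes 1_A)(1_A \otimes y \otimes 1_A) = 1_A \otimes xy \otimes 1_A$, using $x \cdot 1_A = 0$), and the explicit formula for $\epsilon_\cB$ from Theorem \ref{th:main}. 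Your argument is more elementary and self-contained, bypassing Proposition \ref{prop:primitive} entirely; the paper's is shorter given that the naturality square and the computation of $\prim(\cB_L)$ are already available. One small inaccuracy: you do not need \cite[Theorem 5.18]{MilnorMoore} at this point --- the relevant fact is merely that $U_\K(L)$ is generated as a $\K$-algebra by $j_L(L)$, which holds by construction; $\prim(U_\K(L)) = L$ plays no role in the generation statement.
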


\begin{proof}
The fact that any morphism of $A$-bialgebroids is filtered follows from the commutativity of \eqref{eq:natural} and the definition of the primitive filtration. Now, set $U \coloneqq U_\K(\prim(\cB ))$ and $L \coloneqq \prim(\cB)$. Since, by Proposition \ref{prop:primitive},
\[\prim\left(A \odot  U \odot  A\right) = \left(1_A \otimes L \otimes 1_A \right) \oplus \left\langle s_{\cB_{L}} - t_{\cB_{L}} \right\rangle\]
and since $\bP(\epsilon_\cB)\left(1_A \otimes X \otimes 1_A\right) = X$ for all $X \in \prim(\cB)$, it is clear that $\bP(\epsilon_\cB)$ is surjective. Thus, $T_{\epsilon_{\cB}}: T_{\env{A}}\left(M_{\cB_{L}}\right) \to T_{\env{A}}\left(M_\cB\right)$ is surjective as well and we know from Example \ref{ex:CMfiltration} that $\Phi_{\cB_{L}}$ is surjective. Since, by naturality of $\Phi_\cB$, the following diagram commutes
\[
\xymatrix{
T_{\env{A}}\left(M_{\cB_{L}}\right) \ar@{->>}[r]^-{T_{\epsilon_\cB}} \ar@{->>}[d]_-{\Phi_{\cB_{L}}} & T_{\env{A}}\left(M_\cB\right) \ar[d]^-{\Phi_\cB} \\
\cB_{L} \ar[r]_-{\epsilon_\cB} & \cB,
}
\]
$\epsilon_\cB $ is surjective if and only if $\Phi_\cB$ is.
\end{proof}

\begin{example}\label{ex:matrices}
Let $A = \mathsf{Mat}_n(\K)$ be the algebra of $n \times n$ matrices with coefficients in $\K$, for $n\geq 2$.
Consider again the bialgebroid $\cB = \End{\K}{A}$ of Example \ref{ex:bialgebroids}\ref{item:exb}, its space of primitive elements $\prim(\cB) = \Derk{A}$ and the associated Connes-Moscovici's bialgebroid $A \odot U_\K(\Derk{A}) \odot A$ as in Example \ref{ex:Ders}. 
Every $f \in \End{\K}{A}$ is uniquely determined by the images $f\left(E_{i,j}\right) = \sum_{h,k}f_{h,k}^{i,j}E_{h,k}$ for all $i,j$ and it satisfies
\[f(M) = \sum_{h,k}\left(\sum_{i,j}m_{i,j}f_{h,k}^{i,j}\right)E_{h,k} = \sum_{h,k}\sum_{i,j} f_{h,k}^{i,j}E_{h,i}ME_{j,k} = \eta_\cB\left(\sum_{i,j,h,k}f_{h,k}^{i,j}E_{h,i}\ \otimes E_{j,k}\right)(M),\]
whence $\epsilon_\cB$ is surjective and $\cB$ is primitively generated.
\end{example}

Let $\cB $ be an $A$-bialgebroid. 
Recall that, given the filtered left $\env{A}$-module $\cB $ with filtration $\left\{F_n(\cB )\mid n\geq 0 \right\}$ as in \eqref{eq:filtration}, we can consider the associated graded left $\env{A}$-module $\gr(\cB )$ as in \S\ref{ssec:filtgrad}.
Being $\epsilon_\cB $ filtered, it induces a left $\env{A}$-linear homomorphism
\[\gr(\epsilon_\cB ) : \gr\left(A \odot  U(\prim(\cB )) \odot  A\right) \to \gr(\cB ).\]

The following lemma, which should be well-known, is implicitly needed in the proof of Theorem \ref{thm:HR} below. Its statement resembles closely \cite[Remark 2.4]{MoerdijkLie}. Its proof can be deduced from the results in \cite[Appendix B]{LaiachiPaolo-complete} and it follows closely the argument reported in \cite[page 229]{Sweedler} for coalgebras over a field, but we sketch it here for the sake of the reader.

\begin{lemma}\label{lem:supertech}
Let $\cB $ be a primitively generated $A$-bialgebroid. Then $\left(\sMto{\cB},\Delta_\cB,\varepsilon_\cB\right)$ is a filtered $A$-coring with the primitive filtration. In particular, if $\gr_n(\cB )$ is a projective left $\env{A}$-module for all $n \geq 0$, then $F_n(\cB )$ is a projective left $\env{A}$-module for all $n\geq 0$, the map
\[\gr(\cB ) \tensor{A} \gr(\cB ) \to \gr(\cB  \tensor{A} \cB ), \quad (x + F_h(\cB )) \tensor{A} (y + F_{k}(\cB )) \mapsto x\tensor{A}y + F_{h+k+1}(\cB  \tensor{A} \cB ),\]
is an isomorphism of left $\env{A}$-modules and the $A$-coring structure on $\cB $ induces an $A$-coring structure on $\gr(\cB )$. Furthermore, any morphism $\phi :\cB \to \cB' $ of primitively generated $A$-bialgebroids which are graded projective as $A$-corings induces a morphism of graded $A$-corings $\gr(\phi ):\gr\left(\cB\right) \to \gr(\cB' )$.
\end{lemma}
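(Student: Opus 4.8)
The heart of the matter is the first assertion --- that $\Delta_\cB$ respects the primitive filtration --- since everything else will be a formal consequence of the results of \S\ref{ssec:Acorings}.

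To show that $\left(\sMto{\cB},\Delta_\cB,\varepsilon_\cB\right)$ is a filtered $A$-coring I would prove by induction on $n$ that $\Delta_\cB\left(F_n(\cB)\right)\subseteq\sum_{i+j=n}F_i(\cB)\tensor{A}F_j(\cB)$, after first recording the harmless fact that every $F_n(\cB)$ is an $\env{A}$-subbimodule of $\cB$ (being the image under the $\env{A}$-ring morphism $\Phi_\cB$ of \eqref{eq:Phi} of the $\env{A}$-subbimodule $\bigoplus_{k=0}^{n}T_{\env{A}}\left(M_\cB\right)_k$), hence an $A$-subbimodule of $\sMto{\cB}$. The key structural observation is that the grading on $T_{\env{A}}\left(M_\cB\right)$ gives $\bigoplus_{k=0}^{n}T_{\env{A}}\left(M_\cB\right)_k = \left(\env{A}\oplus M_\cB\right)\cdot\left(\bigoplus_{k=0}^{n-1}T_{\env{A}}\left(M_\cB\right)_k\right)$, so that, applying $\Phi_\cB$, one gets $F_n(\cB) = F_1(\cB)\cdot F_{n-1}(\cB)$ for all $n\geq 1$. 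For $n=0$ one has $F_0(\cB) = \eta_\cB\left(\env{A}\right)$ and $\Delta_\cB$ sends $s_\cB(a)t_\cB(\op{b})$ to $s_\cB(a)\tensor{A}t_\cB(\op{b})\in F_0(\cB)\tensor{A}F_0(\cB)$ by \eqref{eq:DeltaLin}; for the inductive step I would write an element of $F_n(\cB)$ as a sum of products $\xi'\zeta$ with $\xi'\in F_1(\cB)$ and $\zeta\in F_{n-1}(\cB)$, use that $\Delta_\cB$ is multiplicative into $\cB\tak{A}\cB$ with the componentwise product \eqref{eq:prod} and that \eqref{eq:DeltaLin} together with the defining property of primitive elements give $\Delta_\cB\left(F_1(\cB)\right)\subseteq F_1(\cB)\tensor{A}F_0(\cB)+F_0(\cB)\tensor{A}F_1(\cB)$, and finally invoke the compatibility of the componentwise product with the tensor-product filtration of \S\ref{ssec:filtgrad} to conclude that $\Delta_\cB(\xi'\zeta)=\Delta_\cB(\xi')\Delta_\cB(\zeta)$ lands in $\sum_{i+j=n}F_i(\cB)\tensor{A}F_j(\cB)$ by the induction hypothesis applied to $\zeta$. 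As $\varepsilon_\cB$ is automatically filtered ($F_k(A)=A$ for $k\geq 0$) and the primitive filtration is exhaustive precisely because $\cB$ is primitively generated, this settles the first claim.

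The remaining statements then follow formally: under the extra hypothesis that each $\gr_n(\cB)$ is a projective $A$-bimodule, $\cB$ with the primitive filtration is a graded projective filtered $A$-coring in the sense of \S\ref{ssec:Acorings}, so Lemma \ref{lemma:locfgp} gives the projectivity of every $F_n(\cB)$ and the isomorphism $\gr(\cB)\tensor{A}\gr(\cB)\cong\gr\left(\cB\tensor{A}\cB\right)$, and Proposition \ref{prop:GrAss} equips $\gr(\cB)$ with an $A$-coring structure. For the last part, a morphism $\phi:\cB\to\cB'$ of $A$-bialgebroids is in particular a morphism of the underlying $A$-corings and, by Corollary \ref{cor:surjectivity}, it is filtered for the primitive filtrations; hence, when $\cB$ and $\cB'$ are graded projective as $A$-corings, $\phi$ is a morphism of graded projective filtered $A$-corings and the functoriality half of Proposition \ref{prop:GrAss} yields that $\gr(\phi):\gr(\cB)\to\gr(\cB')$ is a morphism of graded $A$-corings.

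I expect the main obstacle to be the bookkeeping in the inductive step: one has to check honestly that $F_n(\cB)$ is generated by the products of $n$ elements of $F_1(\cB)$ and that the componentwise product on $\cB\tak{A}\cB$ is compatible with the tensor-product filtration --- that is, that the product of $\xi\tensor{A}\zeta$, with $\xi\in F_i(\cB)$ and $\zeta\in F_j(\cB)$, with $\xi'\tensor{A}\zeta'$, with $\xi'\in F_{i'}(\cB)$ and $\zeta'\in F_{j'}(\cB)$, lies in $F_{i+i'}(\cB)\tensor{A}F_{j+j'}(\cB)$. Both facts are consequences of the grading of $T_{\env{A}}\left(M_\cB\right)$ and of $\Phi_\cB$, $\Delta_\cB$ being algebra maps, but this is where the argument has to be carried out with care.
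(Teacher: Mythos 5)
Your argument is correct and reaches the same conclusion by the same two-stage strategy as the paper: first check that the primitive filtration makes $\left(\sMto{\cB},\Delta_\cB,\varepsilon_\cB\right)$ a filtered $A$-coring, then let Lemma \ref{lemma:locfgp} and Proposition \ref{prop:GrAss} (plus Corollary \ref{cor:surjectivity} for the filteredness of $\phi$) do the rest; that second, formal half of your write-up coincides with the paper's. The only real difference lies in how the filteredness of $\Delta_\cB$ is verified. The paper observes that $F_n(\cB)$ is generated as a left $\env{A}$-module by $1_\cB$ and the monomials $X_1\cdots X_k$ with $X_i \in \prim(\cB)$ and $k\leq n$ (a reduction which tacitly uses the commutation relation \eqref{eq:PRIM} to push all the $\eta_\cB(\env{A})$-factors to the left), and then expands $\Delta_\cB\left(X_1\cdots X_k\right)$ as the product of the $X_i\tensor{A}1_\cB + 1_\cB\tensor{A}X_i$. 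You instead run an induction on $n$ based on $F_n(\cB) = F_1(\cB)\cdot F_{n-1}(\cB)$, the inclusion $\Delta_\cB\left(F_1(\cB)\right)\subseteq F_1(\cB)\tensor{A}F_0(\cB)+F_0(\cB)\tensor{A}F_1(\cB)$ coming from \eqref{eq:DeltaLin} and primitivity, and the componentwise product \eqref{eq:prod}; this sidesteps the monomial reduction (and hence any explicit appeal to \eqref{eq:PRIM}) at the price of the two bookkeeping facts you flag, namely $F_n(\cB) = F_1(\cB)\cdot F_{n-1}(\cB)$ and $F_a(\cB)F_b(\cB)\subseteq F_{a+b}(\cB)$, both of which are immediate from $\Phi_\cB$ being a morphism of $\env{A}$-rings out of the graded tensor ring, so there is no gap.
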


\begin{proof}
It follows from Lemma \ref{lemma:locfgp} and Proposition \ref{prop:GrAss}, once proved that $\cB$ with the filtration $\{F_n(\cB)\mid n \in \N\}$ is a filtered $A$-coring. To this aim observe that, as a left $\env{A}$-submodule of $\cB$, $F_n(\cB )$ is generated by $1_\cB$ and by elements of the form
$
X_{1}\cdots X_{k}
$
for $1\leq k \leq n$ and $X_{i} \in \prim(\cB )$ for all $i$. By applying $\Delta_\cB$ we find that
\begin{gather*}
\Delta_\cB(1_\cB) = 1_\cB \tensor{A} 1_\cB \in F_0(\cB) \tensor{A} F_0(\cB) \qquad \text{and that} \\
\Delta_\cB\left(X_{1}\cdots X_{k}\right) = \prod_{i=1}^k\left(X_i \tensor{A} 1_\cB + 1_\cB \tensor{A} X_i\right) = \sum_{t+s=k}X_{p_1}\cdots X_{p_t} \tensor{A} {X_{q_1}}\cdots {X_{q_s}}
\end{gather*}
belongs to $\sum_{t+s = k}F_t(\cB ) \tensor{A} F_s(\cB )$, where $\{p_i\mid 1\leq i\leq t\}\cup\{q_j\mid 1\leq j\leq s\} = \{1,\ldots,k\}$, $p_1<p_2<\cdots<p_t$ and $q_1<q_2<\cdots<q_s$. By left $\env{A}$-linearity of $\Delta_\cB$, we may conclude that it is filtered. On the other hand, $\varepsilon_\cB$ is obviously filtered (by definition of the filtration on $A$). Therefore, $(\sMto{\cB},\Delta_\cB,\varepsilon_\cB)$ is in fact a filtered $A$-coring.
\end{proof}

In view of Lemma \ref{lem:supertech} and by mimicking \cite[page 3140]{MoerdijkLie} and \S\ref{ssec:Acorings}, we give the following definition.

\begin{definition}
A primitively generated $A$-bialgebroid $\cB$ is called \emph{graded projective} if each associated graded component $\gr_n(\cB)$ is a projective left $\env{A}$-module.
\end{definition}

The following can be understood as an analogue of the celebrated Heyneman-Radford Theorem for coalgebras \cite[Proposition 2.4.2]{HeynemanRadford} (extending the earlier Heyneman-Sweedler Theorem \cite[Lemma 3.2.6]{HeynemanSweedler}).

\begin{theorem}\label{thm:HR}
Let $\phi:\cB \to \cB'$ be a morphism of graded projective primitively generated $A$-bialgebroids. If $\gr\left(\cB\right)$ is strongly graded as an $A$-coring and $\phi$ is injective when restricted to the left $\env{A}$-submodule of $\cB$ generated by $\prim\left(\cB\right)$, then $\phi$ is injective.
\end{theorem}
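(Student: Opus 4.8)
The plan is to run a Heyneman--Radford style induction along the primitive filtration. First I would record the structural inputs. By Corollary~\ref{cor:surjectivity} the map $\phi$ is filtered for the primitive filtrations of $\cB$ and $\cB'$, and these filtrations are exhaustive since both bialgebroids are primitively generated; since they are moreover graded projective, Lemma~\ref{lem:supertech} applies and gives that $\gr(\cB)$ and $\gr(\cB')$ are graded $A$-corings and that $\gr(\phi)\colon\gr(\cB)\to\gr(\cB')$ is a morphism of graded $A$-corings. The first reduction is then that it suffices to prove $\gr_n(\phi)\colon\gr_n(\cB)\to\gr_n(\cB')$ injective for every $n\geq 0$: indeed, if $0\neq x\in\cB$ and $n$ is minimal with $x\in F_n(\cB)$, then $x+F_{n-1}(\cB)$ is a nonzero element of $\gr_n(\cB)$, so $\gr_n(\phi)\big(x+F_{n-1}(\cB)\big)=\phi(x)+F_{n-1}(\cB')\neq 0$ and in particular $\phi(x)\neq 0$.

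For the base cases $n=0,1$ I would invoke the hypothesis, which is the injectivity of $\phi$ on the first term $F_1(\cB)$ of the primitive filtration (the $\env{A}$-submodule generated by $1_\cB$ and $\prim(\cB)$). As $F_0(\cB)\subseteq F_1(\cB)$, this already gives injectivity of $\gr_0(\phi)=\phi|_{F_0(\cB)}$. For $\gr_1(\phi)$ one uses $\phi\circ\eta_\cB=\eta_{\cB'}$, which forces $\phi\big(F_0(\cB)\big)=F_0(\cB')$: if $x\in F_1(\cB)$ has $\phi(x)\in F_0(\cB')$, write $\phi(x)=\phi(z)$ with $z\in F_0(\cB)$, so $x-z\in\ker\phi\cap F_1(\cB)=0$ and $x+F_0(\cB)=0$.

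For the inductive step, fix $n\geq 2$ and assume $\gr_j(\phi)$ injective for all $j<n$. Since $\gr(\phi)$ is a morphism of graded $A$-corings, projecting the diagram in~\eqref{eq:Deltahk} onto its $(1,n-1)$-component gives the identity
\[
\Delta^{[1,n-1]}_{\gr(\cB')}\circ\gr_n(\phi)=\big(\gr_1(\phi)\tensor{A}\gr_{n-1}(\phi)\big)\circ\Delta^{[1,n-1]}_{\gr(\cB)}.
\]
The key step is to show that $\gr_1(\phi)\tensor{A}\gr_{n-1}(\phi)$ is injective, and this is where the graded projectivity hypothesis is essential: a projective left $\env{A}$-module is flat as a left $A$-module (via $s$) and as a right $A$-module (via $t$), because $\env{A}=A\otimes\op{A}$ is free on either side over $A$. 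Hence $\gr_{n-1}(\cB)$ is left-$A$-flat and $\gr_1(\cB')$ is right-$A$-flat, so $\gr_1(\phi)\tensor{A}\gr_{n-1}(\cB)$ and $\gr_1(\cB')\tensor{A}\gr_{n-1}(\phi)$ are injective (using that $\gr_1(\phi)$ and $\gr_{n-1}(\phi)$ are injective), and therefore so is their composite $\gr_1(\phi)\tensor{A}\gr_{n-1}(\phi)$. Now if $\bar x\in\gr_n(\cB)$ satisfies $\gr_n(\phi)(\bar x)=0$, the displayed identity forces $\Delta^{[1,n-1]}_{\gr(\cB)}(\bar x)=0$, and strong gradedness of $\gr(\cB)$ makes $\Delta^{[1,n-1]}_{\gr(\cB)}$ injective, so $\bar x=0$. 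Combined with the first reduction, this gives that $\phi$ is injective.

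I expect the main obstacle to be precisely the injectivity of $\gr_1(\phi)\tensor{A}\gr_{n-1}(\phi)$ over the possibly non-commutative base $A$: a tensor product of injective maps need not be injective, and the role of the graded projectivity hypothesis is exactly to supply the one-sided flatness that rescues the argument. Everything else --- the filtered-ness of $\phi$, the graded $A$-coring structures on $\gr(\cB)$ and $\gr(\cB')$, the well-definedness of $\gr(\phi)$ and its intertwining of the component maps $\Delta^{[h,k]}$ --- is already packaged in Corollary~\ref{cor:surjectivity}, Lemma~\ref{lem:supertech}, Proposition~\ref{prop:GrAss} and diagram~\eqref{eq:Deltahk}, so the proof should come down to this flatness point plus routine bookkeeping.
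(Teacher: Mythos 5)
Your overall architecture is the same as the paper's: reduce to injectivity of each $\gr_n(\phi)$, use Lemma~\ref{lem:supertech} and diagram~\eqref{eq:Deltahk} to pass to the associated graded $A$-corings, and close the induction by combining strong gradedness of $\gr(\cB)$ with injectivity of $\gr_h(\phi)\tensor{A}\gr_k(\phi)$, the latter resting on the fact that projective left $\env{A}$-modules are flat as one-sided $A$-modules via $s$ and $\op{t}$ (a point the paper uses implicitly and you rightly spell out). Your elementary argument replacing the citation of N\u{a}st\u{a}sescu--Van Oystaeyen for ``$\gr(\phi)$ injective $\Rightarrow$ $\phi$ injective'', and your use of the single component $\Delta^{[1,n-1]}$ instead of all $\Delta^{[h,k]}$ with $1\leq h\leq n$, are harmless variants.

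The gap is in your base cases: you read the hypothesis as injectivity of $\phi$ on $F_1(\cB)$, ``the $\env{A}$-submodule generated by $1_\cB$ and $\prim(\cB)$'', but the theorem only assumes injectivity on the submodule generated by $\prim(\cB)$, i.e.\ on $\env{A}\cdot\prim(\cB)$. This is strictly smaller in general: every element of $\env{A}\cdot\prim(\cB)$ is annihilated by $\varepsilon_\cB$ (as $\varepsilon_\cB\big(\eta_\cB(a\otimes\op{b})X\big)=a\,\varepsilon_\cB(X)\,b=0$), while $\varepsilon_\cB(1_\cB)=1_A$, so $F_0(\cB)\not\subseteq\env{A}\cdot\prim(\cB)$. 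With the weaker (correct) hypothesis your arguments for $\gr_0(\phi)$ and $\gr_1(\phi)$ do not start, and the distinction is not cosmetic: in the application (Theorem~\ref{thm:MM}) injectivity is only checked on $\env{A}\cdot\prim(\cB_L)$, so proving the theorem under your stronger hypothesis would not suffice there. The missing ingredient is the counit trick together with Lemma~\ref{lem:LieIdeal}: if $\sum_i s_\cB(a_i)t_\cB(\op{b_i})$ is killed by $\phi$ (for $\gr_0$), applying $\varepsilon_{\cB'}$ and $\varepsilon_{\cB'}\circ\phi=\varepsilon_\cB$ gives $\sum_i a_ib_i=0$, whence $\sum_i s_\cB(a_i)t_\cB(\op{b_i})\in\env{A}\cdot\langle s_\cB-t_\cB\rangle\subseteq\env{A}\cdot\prim(\cB)$ and only now does the hypothesis apply; similarly, for $\gr_1$ a kernel class $\sum_i s_\cB(a_i)t_\cB(\op{b_i})X_i+F_0(\cB)$ has $\phi$-image equal to $-\sum_j s_{\cB'}(a'_j)t_{\cB'}(\op{b'_j})$ with $\sum_j a'_jb'_j=0$ by the same counit argument, so this $F_0(\cB')$-term lies in $\env{A}\cdot\langle s_{\cB'}-t_{\cB'}\rangle=\phi\big(\env{A}\cdot\langle s_\cB-t_\cB\rangle\big)$, and the whole expression is $\phi$ of an element of $\env{A}\cdot\prim(\cB)$, which the hypothesis then forces to vanish. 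Inserting these two arguments (which is exactly the first half of the paper's proof) repairs the base cases; the rest of your proof then goes through.
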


\begin{proof}
In order to prove that $\phi$ is injective, we are going to prove that $\gr\left(\phi\right)$ is injective. In view of \cite[Chapter D, Corollary III.6]{NastasescuOystaeyen}, the latter implies that $\phi$ is injective as well.

To this aim, let us prove that $\gr_n\left(\phi\right)$ is injective for every $n \geq 0$. To begin with, let us prove that $\gr_0(\phi)$ is injective. Since $\gr_0\left(\cB\right) = F_0\left(\cB\right)/F_{-1}\left(\cB\right) = F_0\left(\cB\right) = \Phi_\cB\left(\env{A}\right)$, we may take $\sum_is_\cB\left(a_i\right)t_\cB\left(\op{b_i}\right)$ as generic element in $\ker\left(\gr_0\left(\phi\right)\right)$. Then
\[0 = \gr_0\left(\phi\right)\left(\sum_is_\cB\left(a_i\right)t_\cB\left(\op{b_i}\right)\right) = \sum_is_{\cB'}\left(a_i\right)t_{\cB'}\left(\op{b_i}\right).\]
By applying $\varepsilon_{\cB'}$ to both sides we find out that $\sum_ia_ib_i = 0$ in $A$, whence
\[\sum_ia_i \otimes \op{b_i} = \sum_i\left(a_i \otimes \op{1_A}\right)\left(b_i \otimes \op{1_A} - 1_A \otimes \op{b_i}\right)\]
in $\env{A}$ and so
\[\sum_is_\cB\left(a_i\right)t_\cB\left(\op{b_i}\right) = \sum_is_\cB\left(a_i\right)\left(s_{\cB}\left(b_i\right) - t_\cB\left(\op{b_i}\right)\right) \in \env{A}\cdot \langle s_\cB - t_\cB\rangle.\]
Being $\phi$ injective on $\env{A}\cdot \prim\left(\cB\right)$, we conclude that $\sum_is_\cB\left(a_i\right)t_\cB\left(\op{b_i}\right) = 0$ and hence $\gr_0\left(\phi\right)$ is injective. 

To prove that $\gr_1\left(\phi \right)$ is injective, notice that an element in $F_1\left(\cB\right)/F_0\left(\cB\right)$ is of the form
\begin{align*}
& \sum_is_\cB\left(a_i\right)t_\cB\left(\op{b_i}\right) + \sum_js_\cB\left(a'_j\right)t_\cB\left(\op{{b'_j}}\right)X_js_\cB\left(a''_j\right)t_\cB\left(\op{{b''_j}}\right) + F_0\left(\cB\right) \\
 & \stackrel{\eqref{eq:PRIM}}{=} \sum_js_\cB\left(a'_ja''_j\right)t_\cB\left(\op{\left(b''_jb'_j\right)}\right)X_j + F_0\left(\cB\right)
\end{align*}
for $X_j \in \prim\left(\cB\right)$ and $a_i,b_i,a_j',b_j',a_j'',b_j'' \in A$ for all $i,j$. Therefore, we may assume that $\sum_is_\cB\left(a_i\right)t_\cB\left(\op{b_i}\right)X_i + F_0\left(\cB\right)$ is a generic element belonging to $\ker\left(\gr_1\left(\phi\right)\right)$. This implies that
\[0 = \gr_1\left(\phi\right)\left(\sum_is_\cB\left(a_i\right)t_\cB\left(\op{b_i}\right)X_i + F_0\left(\cB\right)\right) = \sum_is_{\cB'}\left(a_i\right)t_{\cB'}\left(\op{b_i}\right)\phi\left(X_i\right) + F_0\left(\cB'\right)\]
and hence there exists $\sum_js_{\cB'}\left(a_j'\right)t_{\cB'}\left(\op{{b_j'}}\right) \in F_0\left(\cB'\right)$ such that
\[\sum_is_{\cB'}\left(a_i\right)t_{\cB'}\left(\op{b_i}\right)\phi\left(X_i\right) + \sum_js_{\cB'}\left(a_j'\right)t_{\cB'}\left(\op{{b_j'}}\right) = 0\]
in $\cB'$. By applying $\varepsilon_{\cB'}$ again we find out that $\sum_ja_j'b_j' = 0$ and hence 
\[\sum_js_{\cB'}\left(a_j'\right)t_{\cB'}\left(\op{{b_j'}}\right) = \sum_js_{\cB'}\left(a_j'\right)\left(s_{\cB'}\left(b_j'\right) - t_{\cB'}\left(\op{{b_j'}}\right)\right) \in \env{A}\cdot \langle s_{\cB'} - t_{\cB'}\rangle.\]
Summing up,
\begin{align*}
0 & = \sum_is_{\cB'}\left(a_i\right)t_{\cB'}\left(\op{b_i}\right)\phi\left(X_i\right) + \sum_js_{\cB'}\left(a_j'\right)\left(s_{\cB'}\left(b_j'\right) - t_{\cB'}\left(\op{{b_j'}}\right)\right) \\ 
 & = \phi\left(\sum_is_{\cB}\left(a_i\right)t_{\cB}\left(\op{b_i}\right)X_i + \sum_js_{\cB}\left(a_j'\right)\left(s_{\cB}\left(b_j'\right) - t_{\cB}\left(\op{{b_j'}}\right)\right)\right),
\end{align*}
but, being $\phi$ injective on $\env{A}\cdot \prim(\cB)$, this yields that
\[ 0 = \sum_is_{\cB}\left(a_i\right)t_{\cB}\left(\op{b_i}\right)X_i + \sum_js_{\cB}\left(a_j'\right)\left(s_{\cB}\left(b_j'\right) - t_{\cB}\left(\op{{b_j'}}\right)\right)\]
in $\cB$ and hence 
\[\sum_is_\cB\left(a_i\right)t_\cB\left(\op{b_i}\right)X_i + F_0\left(\cB\right) = 0.\]

Finally, let us prove that $\gr_n(\phi )$ is injective for all $n\geq 1$ by induction. We just showed the case $n=1$. Assume that $\gr_1(\phi ),\ldots,\gr_n(\phi )$ are all injective for a certain $n \geq 1$ and consider an element $z \in \ker\left(\gr_{n+1}(\phi )\right)$. Consider also the canonical projections
\[p_{h,k}^\cB  : \bigoplus_{i+j=n+1}\gr_i(\cB ) \tensor{A} \gr_j(\cB ) \to \gr_h(\cB ) \tensor{A} \gr_k(\cB )\]
for $h+k=n+1$, as in \eqref{eq:Deltacomps}. For all $p,q$ such that $p+q = n+1$ we have that
\begin{equation}\label{eq:pphi}
p_{h,k}^{\cB'}  \circ \left({\displaystyle\bigoplus_{i+j=n+1}\gr_i(\phi ) \tensor{A} \gr_j(\phi )}\right) = \left(\gr_h(\phi ) \tensor{A} \gr_k(\phi )\right) \circ p_{h,k}^{\cB}.
\end{equation}
Therefore, for all $1 \leq h \leq n$ we have that
\begin{align*}
0 & = \left(p_{h,k}^{\cB'} \circ \Delta_{\gr({\cB'})}^{[n+1]} \circ\left(\gr_{n+1}(\phi  )\right)\right)\left(z\right) \stackrel{\eqref{eq:Deltahk}}{=} \left(p_{h,k}^{\cB'} \circ \left(\bigoplus_{i+j=n+1}\gr_i(\phi ) \tensor{A} \gr_j(\phi )\right) \circ \Delta^{[n+1]}_{\gr(\cB)}\right)\left(z\right) \\
 & \stackrel{\eqref{eq:pphi}}{=} \left(\left(\gr_h(\phi ) \tensor{A} \gr_k(\phi )\right) \circ p_{h,k}^{\cB} \circ \Delta^{[n+1]}_{\gr(\cB)}\right)\left(z\right).
\end{align*}
By the inductive hypothesis and projectivity of $\gr_s({\cB})$ and $\gr_s({\cB'} )$ as left $\env{A}$-modules for all $s\geq 0$, we know that $\gr_h(\phi ) \tensor{A} \gr_k(\phi )$ is injective and hence
\[
\left(p_{h,k}^{\cB} \circ \Delta^{[n+1]}_{\gr(\cB)}\right)\left(z\right) = 0
\]
for all $h+k=n+1$, $1 \leq h \leq n$. 
Since $\gr\left(\cB\right)$ is strongly graded by hypothesis, $p_{h,k}^{\cB} \circ \Delta^{[n+1]}_{\gr(\cB)}$ is injective and hence $z = 0$.
\end{proof}

\begin{theorem}\label{thm:MM}
Let $\cB $ be an $A$-bialgebroid. Then we have an isomorphism
\[\cB \cong A \odot  U_\K\left(L\right) \odot  A\]
for an $A$-anchored Lie algebra $(L,\omega)$ if and only if 
\begin{enumerate}[label=(CM\arabic*),ref=\textit{(CM\arabic*)}]
\item\label{item:CM2} $L$ is a subalgebra of $\prim(\cB)$ and $\prim(\cB) \cong L \ltimes_\delta \langle s_\cB-t_\cB \rangle$,
\item\label{item:CM1} $\cB $ is graded projective and primitively generated, 
\item\label{item:CM3} the left $\env{A}$-submodule of $\cB $ generated by $L$ is $0$ (in which case we require $\eta_\cB$ to be injective) or it is free and generated by a $\K$-basis of $L$,
\item\label{item:CM4} $\env{A}\cdot \langle s-t\rangle \cap \env{A}\cdot L = 0$ (in particular, $\env{A}\cdot \prim(\cB) = \left(\env{A}\cdot L\right) \oplus \left(\env{A}\cdot \langle s-t\rangle\right)$).
\end{enumerate}
\end{theorem}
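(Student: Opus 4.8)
The plan is to prove the two implications separately, the harder (``if'') direction resting on the Heyneman--Radford-type Theorem~\ref{thm:HR}.

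\emph{Necessity.} I would assume $\cB\cong\cB_L:=A\odot U_\K(L)\odot A$ as $A$-bialgebroids and use the unit $\gamma_L$ (an injective morphism of $A$-anchored Lie algebras) to identify $L$ with $1_A\otimes L\otimes 1_A\subseteq\prim(\cB_L)$, hence with a subalgebra of $\prim(\cB)$. Then \ref{item:CM2} is Proposition~\ref{prop:primitive} together with Corollary~\ref{cor:primitive}; \ref{item:CM1} follows from Remark~\ref{rem:primgen} and Example~\ref{ex:CMfiltration}, which gives $\gr_n(\cB_L)\cong A\otimes\gr_n(U_\K(L))\otimes A$, a \emph{free} left $\env{A}$-module since $\gr_n(U_\K(L))$ is $\K$-free by the classical PBW theorem; \ref{item:CM3} follows from Proposition~\ref{prop:PBW}, which presents the $\env{A}$-submodule of $\cB_L$ generated by $L$ as the free module $A\otimes j_L(L)\otimes A$ on $\{1_A\otimes x_\alpha\otimes 1_A\}$ for a $\K$-basis $\{X_\alpha\}$ of $L$ (and, if $L=0$, shows $\eta_{\cB_L}$ injective, being part of a free $\env{A}$-basis of $\cB_L$); and \ref{item:CM4} holds because in $\cB_L$ one computes $\env{A}\cdot\langle s_{\cB_L}-t_{\cB_L}\rangle\subseteq A\otimes 1_U\otimes A$ while $\env{A}\cdot L=A\otimes j_L(L)\otimes A$, and these intersect trivially as $\K 1_U\cap j_L(L)=0$ by PBW.

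\emph{Sufficiency.} Assuming \ref{item:CM1}--\ref{item:CM4}, I would first observe that by \ref{item:CM2} the pair $(L,\omega_\cB|_L)$ is an $A$-anchored Lie algebra and the inclusion $\theta_\cB|_L\colon L\hookrightarrow\prim(\cB)$ is a morphism of $A$-anchored Lie algebras, so Corollary~\ref{cor:unit} yields a unique morphism of $A$-bialgebroids $\epsilon\colon\cB_L\to\cB$ with $\epsilon\circ J_L=\theta_\cB|_L$, given explicitly by \eqref{eq:defPhi}; the goal is to show $\epsilon$ is bijective. Surjectivity is easy: $\mathrm{Im}(\epsilon)$ is an $\env{A}$-subring of $\cB$ (since $\epsilon\circ\eta_{\cB_L}=\eta_\cB$) containing $\theta_\cB(L)$ and, because $\langle s_\cB-t_\cB\rangle\subseteq\eta_\cB(\env{A})$, also $\langle s_\cB-t_\cB\rangle$; by \ref{item:CM2} it thus contains $\prim(\cB)=L+\langle s_\cB-t_\cB\rangle$, hence every $F_n(\cB)$, hence $\cB$, by \ref{item:CM1}. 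For injectivity I would apply Theorem~\ref{thm:HR} to $\epsilon$: both bialgebroids are graded projective and primitively generated (for $\cB_L$ by the necessity computation, for $\cB$ by \ref{item:CM1}), so it remains to check (i) that $\gr(\cB_L)$ is strongly graded as an $A$-coring, and (ii) that $\epsilon$ is injective on the left $\env{A}$-submodule of $\cB_L$ generated by $\prim(\cB_L)$.

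Point (i) should be a routine consequence of PBW and $\Char\K=0$: via Example~\ref{ex:CMfiltration} and a direct check, $\gr(\cB_L)\cong A\otimes S(L)\otimes A$ as graded $A$-corings, where $S(L)$ is the symmetric algebra on $L$, and the components $S^{h+k}(L)\to S^h(L)\otimes S^k(L)$ of its comultiplication are injective in characteristic $0$ (being left-inverted, after composition with the iterated coproduct in multidegree $(1,\dots,1)$, by $\tfrac1{n!}$ times symmetrization), injectivity being preserved by the exact functor $A\otimes(-)\otimes A$; this is also subsumed in \cite{ArdiMen}. For (ii), Proposition~\ref{prop:primitive} gives $\prim(\cB_L)=(1_A\otimes L\otimes 1_A)\oplus\langle s_{\cB_L}-t_{\cB_L}\rangle$, so the relevant submodule is $(A\otimes j_L(L)\otimes A)\oplus\big(\env{A}\cdot\langle s_{\cB_L}-t_{\cB_L}\rangle\big)$, with $\env{A}\cdot\langle s_{\cB_L}-t_{\cB_L}\rangle\subseteq\eta_{\cB_L}(\env{A})$ corresponding under $\eta_{\cB_L}$ to the kernel of the product map $\env{A}\to A$; the morphism $\epsilon$ sends the free $\env{A}$-basis $\{1_A\otimes x_\alpha\otimes 1_A\}$ of the first summand to the free $\env{A}$-basis $\{\theta_\cB(X_\alpha)\}$ of $\env{A}\cdot L$ furnished by \ref{item:CM3}, hence is injective there, and on the second summand it acts as $\eta_\cB$ restricted to $\ker\!\big(\env{A}\to A\big)$, which is injective because $\eta_\cB$ is --- indeed \ref{item:CM3} forces $\eta_\cB$ injective, directly if $L=0$ and, if $L\neq 0$, because left multiplication by a nonzero free $\env{A}$-basis vector $\theta_\cB(X_\alpha)$ is injective. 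Since by \ref{item:CM4} the images of the two summands meet only in $0$, $\epsilon$ is injective on $\env{A}\cdot\prim(\cB_L)$, Theorem~\ref{thm:HR} applies, and $\epsilon$ --- now bijective and a morphism of $A$-bialgebroids --- is the desired isomorphism. I expect the crux to be precisely this injectivity step: establishing the strong gradedness of $\gr(\cB_L)$ over the non-commutative base and, above all, carrying out the bookkeeping on $\env{A}\cdot\prim(\cB_L)$ so that exactly conditions \ref{item:CM3} and \ref{item:CM4} are what is needed to conclude.
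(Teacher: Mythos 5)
Your proposal is correct and follows essentially the same route as the paper: sufficiency via Corollary \ref{cor:unit}, surjectivity from primitive generation, and injectivity through Theorem \ref{thm:HR} after checking that $\gr(\cB_L)$ is graded projective and strongly graded (via PBW and $\Char{\K}=0$) and that the map is injective on $\env{A}\cdot\prim(\cB_L)$ using \ref{item:CM3} and \ref{item:CM4}. The only deviations are cosmetic --- you argue surjectivity directly through the image being an $\env{A}$-subring instead of invoking the naturality argument of Corollary \ref{cor:surjectivity}, you phrase strong gradedness via $A\otimes S(L)\otimes A$ rather than the paper's diagram with dual functionals, and you spell out the necessity direction, which the paper dismisses as clear.
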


\begin{proof}
We want to apply Theorem \ref{thm:HR} to show that the conditions listed are sufficient. 
First of all, let us prove that for any $A$-anchored Lie algebra $(L,\omega)$ the Connes-Moscovici's bialgebroid $\cB_L$ is graded projective and that $\gr\left(\cB_L\right)$ is strongly graded (we already know that $\cB_L$ is primitively generated from Example \ref{ex:CMfiltration} and Remark \ref{rem:primgen}). 

In view of \eqref{eq:CMfiltration}, we know that $F_n\left(\cB _L\right) = A \otimes F_n(U_\K(L)) \otimes A$. By exactness of the tensor product over a field, the short exact sequence of $\K$-vector spaces
\[
\xymatrix{
0 \ar[r] & F_{n-1}(U_\K(L)) \ar[r] & F_n(U_\K(L)) \ar[r] & \gr_n(U_\K(L)) \ar[r] & 0
}
\]
induces a short exact sequence of left $\env{A}$-modules
\[
\xymatrix{
0 \ar[r] & A \otimes F_{n-1}(U_\K(L)) \otimes A \ar[r] & A \otimes F_n(U_\K(L)) \otimes A \ar[r] & A \otimes \gr_n(U_\K(L)) \otimes A \ar[r] & 0.
}
\]
Therefore, we have $\gr_n(\cB _L) \cong A \otimes \gr_n(U_\K(L)) \otimes A$ as left $\env{A}$-modules. In particular, $\gr_n\left(\cB_L\right)$ is a free left $\env{A}$-module.

To show that $\gr\left(\cB_L\right)$ is strongly graded, consider an element
\[z \coloneqq \sum_{\alpha,\beta} a_\alpha \otimes u_{\alpha,\beta} \otimes b_\beta \ \in \ A \otimes \gr_{n}(U) \otimes A\]
such that $ \left(p_{h,k}^{\cB_L} \circ \Delta^{[n]}_{\gr(\cB_L)}\right)\left(z\right) = 0$ for some $h,k$ satisfying $h+k = n$, where the elements $\{a_\alpha\}_\alpha$ in $A$ are linearly independent over $\K$ as well as the elements $\{b_\beta\}_\beta$. Now, consider the commutative diagram
\[
\xymatrix @C=60pt{
A \otimes \gr_{n}(U) \otimes A \ar[r]^-{a_\alpha^* \otimes \gr_{n}(U) \otimes b_\beta^*} \ar[d]_-{\Delta^{[n]}_{\gr(\cB_L)}} & \gr_{n}(U) \ar[dd]^-{\Delta^{[n]}_{\gr(U)}} \\
{\displaystyle\bigoplus_{i+j = n} \left(A \otimes \gr_i(U) \otimes A\right) \tensor{A} \left(A \otimes \gr_j(U) \otimes A\right)} \ar[d]_-{p^{\cB_L}_{h,k}} &  \\
\left(A \otimes \gr_h(U) \otimes A\right) \tensor{A} \left(A \otimes \gr_k(U) \otimes A\right) \ar[d]_-{\cong} & {\displaystyle\bigoplus_{i+j = n} \gr_i(U) \otimes \gr_j(U) } \ar[d]^-{p_{h,k}^{\gr(U)}} \\
A \otimes \gr_h(U) \otimes A \otimes \gr_k(U) \otimes A \ar[r]_-{a_\alpha^* \otimes \gr_h(U) \otimes 1_A^* \otimes \gr_k(U) \otimes b_\beta^*} & \gr_h(U) \otimes \gr_k(U).
}
\]
If we plug $z$ in it and we recall that $\gr(U)$ is strongly graded (that is, $p_{h,k}^{\gr(U)} \circ \Delta^{[n]}_{\gr(U)}$ is injective for all $n \geq 0$ and for all $h+k = n$) then we find $u_{\alpha,\beta} = 0$ for all $\alpha,\beta$ and hence $z = 0$.

Now, the inclusion of $A$-anchored Lie algebras $L \to \prim(\cB)$ extends uniquely of to a morphism of $A$-bialgebroids $\Psi:\cB_L \to \cB$, in view of the universal property of $\cB_L$ (Corollary \ref{cor:unit}). Moreover, similarly to what we did in the proof of Corollary \ref{cor:surjectivity}, one can show that $\Psi$ is surjective (because $\cB$ is primitively generated and $\bP(\Psi)$ is surjective).
Therefore, to conclude by applying Theorem \ref{thm:HR} we are left to check that the candidate isomorphism $\Psi$ is injective when restricted to $\env{A}\cdot \prim(\cB_{L })$. Since $\prim(\cB_{L }) = \left(1_A \otimes L  \otimes 1_A \right) \oplus \left\langle s_{\cB_{L }} - t_{\cB_{L }} \right\rangle$, a generic element in $\env{A}\cdot \prim(\cB_{L })$ is of the form
\[
\sum_{i,j} a_{i,j} \otimes x_i \otimes b_{i,j} + \sum_{h,k}a''_{h,k}a'_h \otimes 1_U \otimes b_{h,k}'' - a_{h,k}'' \otimes 1_U \otimes a'_hb_{h,k}''
\]
for $X_i \in L $ and $a_{i,j},b_{i,j},a_{h,k}'',b_{h,k}'',a_h'\in A$, where we may assume that the $X_k$'s are elements of a $\K$-basis of $L $, without loss of generality. Thus,
\begin{align*}
0 & = \Psi\left(\sum_{i,j} a_{i,j} \otimes x_i \otimes b_{i,j} + \sum_{h,k}a''_{h,k}a'_h \otimes 1_U \otimes b_{h,k}'' - a_{h,k}'' \otimes 1_U \otimes a'_hb_{h,k}''\right) \\
 & = \sum_{i,j} s_\cB\left(a_{i,j}\right)t_\cB\left(b_{i,j}\right)X_i + \sum_{h,k} s_\cB\left(a_{h,k}'' a'_h\right)t_\cB\left(b_{h,k}''\right) - \sum_{h,k}s_\cB\left(a_{h,k}''\right)t_\cB\left(a'_hb_{h,k}''\right).
\end{align*}
By \ref{item:CM4}, this entails that
\begin{gather}
0 = \sum_{i,j} s_\cB\left(a_{i,j}\right)t_\cB\left(b_{i,j}\right)X_i \qquad \text{and} \label{eq:supertech1} \\
0 = \sum_{h,k} s_\cB\left(a_{h,k}'' a'_h\right)t_\cB\left(b_{h,k}''\right) - s_\cB\left(a_{h,k}''\right)t_\cB\left(a'_hb_{h,k}''\right). \label{eq:supertech2}
\end{gather}
By \ref{item:CM3}, relation \eqref{eq:supertech1} yields that 
\[\sum_{j} a_{i,j} \otimes b_{i,j} = 0 \]
in $\env{A}$ for all $i$. Relation \eqref{eq:supertech2}, instead, implies that
\[ 0 = \eta_\cB\left(\sum_{h,k}a''_{h,k}a'_h \otimes b_{h,k}'' - a_{h,k}'' \otimes a'_hb_{h,k}''\right).\]
However, since $\env{A}\cdot L $ is a free left $\env{A}$-module with action given via $\eta_\cB$ (or $\eta_\cB$ is injective by hypothesis), $\eta_\cB$ itself has to be injective and hence
\[0 = \sum_{h,k}a''_{h,k}a'_h \otimes b_{h,k}'' - a_{h,k}'' \otimes a'_hb_{h,k}'',\]
which, in turn, yields
\[0 = \sum_{h,k}a''_{h,k}a'_h \otimes 1_U \otimes b_{h,k}'' - a_{h,k}'' \otimes 1_U \otimes a'_hb_{h,k}''.\]
Summing up, \ref{item:CM2} ensures the existence of a morphism $\Psi$ and \ref{item:CM1} entails that $\Psi$ is surjective. Conditions \ref{item:CM3} and \ref{item:CM4}, instead, allow us to conclude that $\Psi$ is injective on $\env{A}\cdot \prim\left(\cB_L\right)$ and hence, by Theorem \ref{thm:HR}, $\Psi$ is injective on $\cB_L$. Thus, $\Psi$ is an isomorphism. 

The fact that the conditions \ref{item:CM2} -- \ref{item:CM4} are necessary is clear
\end{proof}

\begin{remark}
In the context of the proof above, observe that if $L = 0$, then $\prim(\cB) = \langle s_\cB - t_\cB\rangle$. If moreover $\cB$ is primitively generated, then $\Psi : A \odot  \K \odot  A \to \cB$ is surjective and it coincides with $\eta_\cB : \env{A} \to \cB$ up to the isomorphism $A\odot  \K \odot  A \cong \env{A}$. This is the point where injectivity of $\eta_\cB$ enters the picture.
\end{remark}

\begin{example}
Let $A = \mathsf{Mat}_n(\K)$ for $n \geq 2$ and let $\cB = \End{\K}{A}$ as in Example \ref{ex:bialgebroids}\ref{item:exb}.
It follows from the Skolem-Noether theorem that every derivation of $A$ is inner. 
In particular, $\prim(\cB) = \langle s-t\rangle$ and conditions \ref{item:CM2} and \ref{item:CM4} are satisfied. Furthermore, as we have seen in Example \ref{ex:matrices}, $\cB$ is also primitively generated and, in fact, $\gr_n(\cB) = 0$ for all $n \geq 1$. 

In order to apply Theorem \ref{thm:MM}, we are left to show that $\eta_\cB$ is injective (notice that we already know it is surjective), but a straightforward computation reveals that $\eta_\cB$ coincides with the composition of isomorphisms
\[\mathsf{Mat}_n(\K) \otimes \op{\mathsf{Mat}_n(\K)} \xrightarrow{\mathsf{Mat}_n(\K) \otimes (-)^{\mathsf{T}}} \mathsf{Mat}_n(\K) \otimes \mathsf{Mat}_n(\K) \cong \mathsf{Mat}_{n^2}(\K) \cong \End{\K}{\mathsf{Mat}_n(\K)}.\]
Therefore, \ref{item:CM1} and \ref{item:CM3} are satisfied as well and, by Theorem \ref{thm:MM}, $\cB \cong A \odot U_\K(0) \odot A$.
\end{example}

% --------------------------------------------------------------------------------------------------------------------------------- %
% COMMUTATIVE ALGEBRA CASE
% --------------------------------------------------------------------------------------------------------------------------------- %

\subsection{Bialgebroids over commutative algebras}\label{ssec:commutative}

A slightly more favourable situation is provided by the case of bialgebroids over a commutative base.

Let us assume henceforth that $A$ is a commutative $\K$-algebra. This implies that now we can consider the target $t$ of an $A$-bialgebroid $\cB$ as an algebra map $t : A \to \cB$ and hence we will omit the $\op{(-)}$. 
By Lemma \ref{lem:LieIdeal}, we may consider the quotient Lie algebra
\[ \cl{\prim(\cB)} \coloneqq \prim(\cB)/\langle s-t \rangle,\]
which is $A$-anchored with anchor $\cl{\omega}_\cB$ induced by $\omega_\cB$, because now $\langle s-t \rangle$ is abelian with zero anchor. This induces a well-defined functor
\[\bP':\Bialgd_{A} \to \ALie{A}, \qquad \cB \mapsto \cl{\prim(\cB)}.\]

As we have seen in Proposition \ref{prop:primitive} and in Corollary \ref{cor:primitive}, $\cB_L$ satisfies the additional property that $\prim(\cB_L) \cong \cl{\prim(\cB_L)} \ltimes_{\delta} \langle s-t \rangle$ as $A$-anchored Lie algebras.
If we restrict our attention to the full subcategory $\cl{\Bialgd}_A$ of $\Bialgd_A$ composed by all those $A$-bialgebroids $\cB$ such that $\prim(\cB) \cong \cl{\prim(\cB)} \ltimes_{\delta} \langle s-t\rangle$ as $A$-anchored Lie algebras, then the functors $\bB$ and $\bP'$ induce functors
\[\cl{\bP} : \cl{\Bialgd}_{A} \to \ALie{A}, \qquad \cB \mapsto \cl{\prim(\cB)}\]
and
\[\cl{\bB}:\ALie{A} \to \cl{\Bialgd}_{A}, \qquad (L, \omega) \mapsto A \odot  U_\K(L) \odot  A.\]
It can be shown that, in this case, we always have a natural isomorphism
\[\cl{\gamma}_L \coloneqq \left(L\xrightarrow{\gamma_L} \prim(\cB_L) \twoheadrightarrow \cl{\prim(\cB_L)}\right)\]
inducing a surjective map
\[
\Bialgd_A\left(\cB_L,\cB\right) \to \ALie{A}\left((L,\omega),\left(\cl{\prim(\cB)},\cl{\omega}_\cB\right)\right),
\]
which, however, is not injective in general (that is, $\cl{\bB}$ and $\cl{\bP}$ are not adjoint functors). 
Nevertheless, we may always consider a ``preferred'' morphism of $A$-bialgebroids
\[
\cl{\epsilon}_\cB \coloneqq \Big(A \odot  U_\K\big(\cl{\prim(\cB)}\big) \odot  A \xrightarrow{A \odot  U_\K(\iota_\cB) \odot  A} A \odot  U_\K\big(\prim(\cB)\big) \odot  A \xrightarrow{\epsilon_\cB} \cB\Big)
\]
induced by a chosen injection $\iota_\cB: \cl{\prim(\cB)} \to \prim(\cB)$ and by the counit of the adjunction in Theorem \ref{th:main}. Since the hypothesis on $\prim(\cB)$ ensures that
\[\bP(\cl{\epsilon}_\cB):\prim\left(A \odot  U_\K\big(\cl{\prim(\cB)}\big) \odot  A\right) \to \prim(\cB)\] 
is an epimorphism, we may conclude that $\cB$ is primitively generated if and only if $\cl{\epsilon}_\cB$ is surjective, as in Corollary \ref{cor:surjectivity}, and we may restate Theorem \ref{thm:MM} in the present framework.

\begin{theorem}\label{thm:MMbis}
Let $\cB $ be an $A$-bialgebroid over a commutative algebra $A$. Then
\[\cB \cong A \odot  U_\K\left(\cl{\prim(\cB)}\right) \odot  A\]
if and only if 
\begin{enumerate}[label=(CM\arabic*),ref=\textit{(CM\arabic*)}]
\item\label{item:CM2bis} $\prim(\cB) \cong \cl{\prim(\cB)} \ltimes_\delta \langle s_\cB-t_\cB \rangle$,
\item\label{item:CM1bis} $\cB $ is graded projective and primitively generated, 
\item\label{item:CM3bis} the left $\env{A}$-submodule of $\cB $ generated by $\cl{\prim(\cB )}$ is $0$ (in which case we require $\eta_\cB$ to be injective) or it is free and generated by a $\K$-basis of $\cl{\prim(\cB )}$,
\item\label{item:CM4bis} $\env{A}\cdot \langle s-t\rangle \cap \env{A}\cdot \cl{\prim(\cB)} = 0$.
\end{enumerate}
\end{theorem}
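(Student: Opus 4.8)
The plan is to deduce Theorem~\ref{thm:MMbis} from Theorem~\ref{thm:MM} by promoting $\cl{\prim(\cB)}$ to an honest $A$-anchored Lie subalgebra of $\prim(\cB)$ whenever \ref{item:CM2bis} holds. What makes this possible is commutativity of $A$: by Lemma~\ref{lem:LieIdeal} the anchor of $\langle s_\cB-t_\cB\rangle$ sends $s(a)-t(a)$ to $[a,-]=0$, so $\langle s_\cB-t_\cB\rangle$ is an abelian ideal of $\prim(\cB)$ with zero anchor, $\cl{\prim(\cB)}$ is an $A$-anchored Lie algebra with anchor $\cl{\omega}_\cB$, and the canonical projection $p:\prim(\cB)\to\cl{\prim(\cB)}$ satisfies $\cl{\omega}_\cB\circ p=\omega_\cB$. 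Under \ref{item:CM2bis}, the isomorphism $\prim(\cB)\cong\cl{\prim(\cB)}\ltimes_\delta\langle s_\cB-t_\cB\rangle$ composed with the first-factor inclusion (Lemma~\ref{lem:sdprod}) produces a Lie-algebra section $\iota_\cB$ of $p$; since $\omega_\cB\circ\iota_\cB=\cl{\omega}_\cB\circ p\circ\iota_\cB=\cl{\omega}_\cB$, this $\iota_\cB$ is a morphism of $A$-anchored Lie algebras, so it identifies $(\cl{\prim(\cB)},\cl{\omega}_\cB)$ with its image $L\coloneqq\iota_\cB(\cl{\prim(\cB)})\subseteq\prim(\cB)$. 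I would take this $\iota_\cB$ as the chosen injection in the definition of $\cl{\epsilon}_\cB$.

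For the ``if'' direction I would assume \ref{item:CM2bis}--\ref{item:CM4bis} and verify that the subalgebra $L$ above satisfies conditions \ref{item:CM2}--\ref{item:CM4} of Theorem~\ref{thm:MM}: condition \ref{item:CM2} for $L$ is \ref{item:CM2bis} transported along $L\cong\cl{\prim(\cB)}$, condition \ref{item:CM1} is literally \ref{item:CM1bis}, and, because $\env{A}\cdot L=\env{A}\cdot\iota_\cB(\cl{\prim(\cB)})$ and a $\K$-basis of $L$ is the $\iota_\cB$-image of one of $\cl{\prim(\cB)}$, conditions \ref{item:CM3} and \ref{item:CM4} for $L$ are \ref{item:CM3bis} and \ref{item:CM4bis}. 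Theorem~\ref{thm:MM} then gives an isomorphism $\cB\cong A\odot U_\K(L)\odot A$ which, tracing through its proof, is the unique extension of $\iota_\cB:\cl{\prim(\cB)}\xrightarrow{\sim}L\hookrightarrow\prim(\cB)\subseteq\cB$, that is, precisely $\cl{\epsilon}_\cB$. Finally, since $\iota_\cB$ corestricts to an isomorphism of $A$-anchored Lie algebras $\cl{\prim(\cB)}\xrightarrow{\sim}L$, applying the functor $\bB$ of Theorem~\ref{th:main} yields an isomorphism $A\odot U_\K(\cl{\prim(\cB)})\odot A\cong A\odot U_\K(L)\odot A$, and composing the two finishes this direction.

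For the converse I would start from an isomorphism $\cB\cong A\odot U_\K(\cl{\prim(\cB)})\odot A$ and apply Corollary~\ref{cor:primitive} to its right-hand side: this gives $\prim(\cB)\cong\cl{\prim(\cB)}\ltimes_\delta\langle s_\cB-t_\cB\rangle$, which is \ref{item:CM2bis}, and in particular realizes $\cl{\prim(\cB)}$ as a subalgebra $L$ of $\prim(\cB)$ isomorphic to $\cl{\prim(\cB)}$. As $\cB\cong A\odot U_\K(L)\odot A$, the necessity part of Theorem~\ref{thm:MM} provides conditions \ref{item:CM1}, \ref{item:CM3}, \ref{item:CM4} for $L$ --- graded projectivity and primitive generation via Example~\ref{ex:CMfiltration} and the proof of Theorem~\ref{thm:MM}, freeness over $\env{A}$ via Proposition~\ref{prop:PBW}, and the splitting via Corollary~\ref{cor:primitive} --- and transporting these back along $L\cong\cl{\prim(\cB)}$ yields \ref{item:CM1bis}, \ref{item:CM3bis}, \ref{item:CM4bis}.

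The argument is essentially bookkeeping on top of Theorem~\ref{thm:MM}; the two points that need real care, and where I expect the only friction, are (i) that $\iota_\cB$ is anchor-preserving, which hinges entirely on commutativity of $A$ forcing the zero anchor on $\langle s_\cB-t_\cB\rangle$ (the same fact that makes $\cl{\prim(\cB)}$ an $A$-anchored Lie algebra at all), and (ii) that the injection $\iota_\cB$ used to define $\cl{\epsilon}_\cB$ is precisely the section coming from the splitting in \ref{item:CM2bis}, so that the left $\env{A}$-submodule ``generated by $\cl{\prim(\cB)}$'' appearing in \ref{item:CM3bis}--\ref{item:CM4bis} is exactly $\env{A}\cdot L$ inside $\cB$. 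With those two identifications fixed, Theorem~\ref{thm:MMbis} is a faithful restatement of Theorem~\ref{thm:MM} and requires no new computation.
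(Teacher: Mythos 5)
Your proposal is correct and follows exactly the route the paper intends: Theorem \ref{thm:MMbis} is presented there as a restatement of Theorem \ref{thm:MM} in the commutative setting, where the splitting in \ref{item:CM2bis} supplies the injection $\iota_\cB$ realizing $\cl{\prim(\cB)}$ as an $A$-anchored Lie subalgebra $L\subseteq\prim(\cB)$ (anchor-compatibility being automatic since $\langle s_\cB-t_\cB\rangle$ has zero anchor when $A$ is commutative), after which \ref{item:CM2bis}--\ref{item:CM4bis} become \ref{item:CM2}--\ref{item:CM4} for $L$ and the converse follows from Corollary \ref{cor:primitive} together with the necessity part of Theorem \ref{thm:MM}. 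Your write-up in fact spells out the bookkeeping more explicitly than the paper does.
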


\begin{example}
Let $A \coloneqq \C[X]$ and consider the $A$-bialgebroid
\[
\cH \coloneqq \C\left[x,y,t,z,\frac{1}{t}\right]
\]
studied in \cite[\S5.6]{LaiachiPepe-big} and inspired by the coordinate ring of the Malgrange's groupoid. Its bialgebroid structure is uniquely determined by
\begin{gather*}
s_\cH(X) = x, \qquad t_\cH(X) = y, \qquad \varepsilon_\cH(x) = X = \varepsilon_\cH(y), \qquad \varepsilon_\cH(t) = 1, \qquad \varepsilon_\cH(z) = 0, \\
\Delta_\cH(x) = x \tensor{A} 1, \qquad \Delta_\cH(y) = 1 \tensor{A} y, \qquad \Delta_\cH(t) = t \tensor{A} t, \qquad \Delta_\cH(z) = z \tensor{A} t + t^2 \tensor{A} z
\end{gather*}
and ordinary multiplication and unit. The ideal $\cI = \langle t-1\rangle$ generated by $t-1$ in $\cH$ is a bi-ideal (it is an ideal by construction and a coideal by \cite[\S2.4]{BrzezinskiWisbauer}) and hence the quotient $\cH/\cI$ is an $A$-bialgebroid. It can be identified with $\cB\coloneqq \C[u,v,w]$ with
\begin{gather*}
s(X) = u, \qquad t(X) = w, \qquad \varepsilon(u) = X = \varepsilon(w), \qquad \varepsilon(v) = 0, \\
\Delta(u) = u \tensor{A} 1, \qquad \Delta(w) = 1 \tensor{A} w, \qquad \Delta(v) = v \tensor{A} 1 + 1 \tensor{A} v.
\end{gather*}
The space of primitive elements of $\cB$ is
\[\prim(\cB) = \langle u^i - w^i \mid i \geq 0 \rangle \oplus \C v\]
with $\cl{\prim(\cB)} \cong \C v$, whence \ref{item:CM2bis}, \ref{item:CM3bis} and \ref{item:CM4bis} are satisfied (in this case, $\delta \equiv 0$ by Lemma \ref{lem:sdprod}, as everything is commutative). Concerning \ref{item:CM1bis}, we observe that $\cB$ with the foregoing structures is the free left $\env{A}$-module generated by $\{v^k \mid k \geq 0\}$, whence it is primitively generated and graded projective (even free). Thus, Theorem \ref{thm:MMbis} ensures that
\[\cB \cong A \odot U_\C(\C v) \odot A.\]
The same conclusion could have been drawn by observing that $\cB \cong \C[X] \otimes \C[Y] \otimes \C[X]$ is the scalar extension commutative (Hopf) $\C[X]$-bialgebroid obtained from the Hopf algebra $\C[Y]$ and that, as Hopf algebras, $\C[Y] \cong U_\C(\C Y)$.
\end{example}

% --------------------------------------------------------------------------------------------------------------------------------- %
% FINAL REMARKS
% --------------------------------------------------------------------------------------------------------------------------------- %

\subsection{Final Remarks}\label{ssec:finalremarks}

An additional step which deserve to be taken is to restrict the attention further to those $A$-bialgebroids $\cB$ over a commutative algebra $A$ such that $s_\cB=t_\cB$ (for example, cocommutative $A$-bialgebroids). However, in this case the Connes-Moscovici's construction is not the correct construction to look at. One may prove that the assignment
\[A \odot  U_\K(L) \odot  A \to A ~\#~  U_\K(L) , \qquad a \otimes u \otimes b \mapsto ab \otimes u\]
provides a surjective homomorphism of $A$-bialgebroids with kernel the ideal generated by $\langle s_{\cB_L}-t_{\cB_L} \rangle$ in $\cB_L$. The $A$-bialgebroid structure on $A ~\#~  U_\K(L)$ is that of extension of scalars with trivial coaction on $A$, that is to say,
\[s = t : a \mapsto a \otimes 1_{U_\K(L)}, \qquad \varepsilon:a \otimes u \mapsto  a\varepsilon(u), \qquad \Delta: a \otimes u \mapsto \sum(a \otimes u_{(1)}) \tensor{A} (1_A \otimes u_{(2)})\]
and semi-direct product algebra structure, that is,
\[(a \otimes u)(b \otimes v) = \sum a(u_{(1)}\cdot b) \otimes u_{(2)}v \qquad \text{and} \qquad 1_{A \# U_\K(L)} = 1_A \otimes 1_{U_\K(L)}.\]
In view of the results from \S\ref{ssec:UEA} and \S\ref{ssec:adj}, the foregoing observations suggest that $A ~\#~  U_\K(L)$ would be the right $A$-bialgebroid construction to consider, in order to recover the universal property of Theorem \ref{thm:UEA} and an adjunction as in Theorem \ref{th:main}. 
Nevertheless, we keep this question for a future investigation.

% --------------------------------------------------------------------------------------------------------------------------------- %
% BIBLIOGRAPHY
% --------------------------------------------------------------------------------------------------------------------------------- %

\end{document}